\titleformat{\section}{\normalfont\scshape\centering}{\thesection}{1em}{}
  \titleformat{\subsection}{\bfseries}{\thesubsection}{1em}{}
\newtheorem{theorem}{Theorem}[section]
\newtheorem{corollary}[theorem]{Corollary}
\newtheorem{lemma}[theorem]{Lemma}
\newtheorem{proposition}[theorem]{Proposition}
\theoremstyle{definition}
\newtheorem{definition}[theorem]{Definition}
\newtheorem{remark}[theorem]{Remark}
\newtheorem{example}[theorem]{Example}
\numberwithin{equation}{section}
\newcommand\eps{\varepsilon}
\newcommand\E{\mathbb{E}}
\newcommand\R{\mathbb{R}}
\newcommand\Z{\mathbb{Z}}
\newcommand\N{\mathbb{N}}
\newcommand\C{\mathbb{C}}
\newcommand\Siegel{{\textnormal{Siegel}}}
\newcommand\poly{\mathrm{poly}}
\newcommand\Lip{\mathrm{Lip}}
\newcommand\id{\mathrm{id}}
\newcommand\lin{\mathrm{lin}}
\newcommand\Cramer{{\textnormal{Cram\'er}}}
\newcommand\local{{\textnormal{local}}}
\newcommand\ineff{{\textnormal{ineff}}}
\begin{document}
\title{Quantitative bounds for Gowers uniformity of the M\"obius and von Mangoldt functions}

\author[Terence Tao]{Terence Tao}
\address{Department of Mathematics, University of California, Los Angeles, CA 90095-1555,USA}
\email{tao@math.ucla.edu}

\author[Joni Ter\"{a}v\"{a}inen]{Joni Ter\"{a}v\"{a}inen}

\address{Department of Mathematics and Statistics, University of Turku, Turku, Finland}
\email{joni.p.teravainen@gmail.com}

\begin{abstract} We establish quantitative bounds on the $U^k[N]$ Gowers norms of the M\"obius function $\mu$ and the von Mangoldt function $\Lambda$ for all $k$, with error terms of the shape $O((\log\log N)^{-c})$.  As a consequence, we obtain quantitative bounds for the number of solutions to any linear system of equations of finite complexity in the primes, with the same shape of error terms. We also obtain the first quantitative bounds on the size of sets containing no $k$-term arithmetic progressions with shifted prime difference.
\end{abstract}

\maketitle

\section{Introduction}

Throughout this paper we fix an integer $k \geq 1$, and let $N > 1$ be a real parameter that is assumed to be sufficiently large depending on $k$.  We will also make frequent use of the somewhat smaller quantity
\begin{equation}\label{Q-def}
Q \coloneqq \exp(\log^{1/10} N),
\end{equation}
for instance by sieving out multiples of all primes less than $Q$.  We use $c$ to denote various small positive constants depending on $k$ that are allowed to vary from line to line, or even within the same line.  All the constants in our asymptotic notation\footnote{See Section~\ref{notation-sec} for a more detailed description of the asymptotic notation conventions used in this paper.} are permitted to depend on $k$. The implied constants will be effective, except when otherwise stated.  

In this paper we will be interested in quantitatively controlling  the Gowers norm uniformity of the M\"obius function $\mu$ and the von Mangoldt function $\Lambda$ on the interval $[N] \coloneqq \{n \in \N: 1 \leq n \leq N \}$, as well as various related statistics. Our methods can extend to some other arithmetic functions, such as sufficiently ``non-pretentious'' bounded multiplicative functions, but we focus on the classical functions $\mu, \Lambda$ here for ease of exposition.  Such quantitative control on the Gowers norms will be used to quantify the asymptotics for linear equations in primes obtained in~\cite{gt-linear}.

We begin by recalling the definition of the Gowers uniformity norms, first introduced by Gowers in~\cite{gowers}; we largely follow the notation of~\cite[Appendix B]{gt-linear} here, except that we will find it convenient to work with both normalized and unnormalized Gowers norms.

\begin{definition}[Gowers norms]\label{gowers-norm}  Let $k \geq 1$ be a natural number. 
\begin{itemize}
\item[(i)] If $\omega \in \{0,1\}^k$ is a $k$-tuple, we write $\omega_1,\dots,\omega_k \in \{0,1\}$ for the components of $\omega$, and $|\omega| \coloneqq \omega_1+\dots+\omega_k$.  Similarly, if $\vec h \in G^k$ is a $k$-tuple in some additive group $G$, we write $h_1,\dots,h_k \in G$ for the components of $h$, and write $\omega \cdot \vec h$ for the ``dot product''
$$ \omega \cdot \vec h \coloneqq \omega_1 h_1 + \dots + \omega_k h_k.$$
We often identify $G^{k+1}$ with $G \times G^k$, thus for instance the assertion $(n,\vec h) \in G^{k+1}$ means that $n \in G$ and $\vec h \in G^k$.
\item[(ii)] If $f \colon G \to \C$ is a finitely supported function on an additive group $G$, we define the (unnormalized) Gowers uniformity norm $\|f\|_{\tilde U^k(G)}$ to be the quantity
$$ \|f\|_{\tilde U^k(G)} \coloneqq \left( \sum_{(n,\vec h) \in G^{k+1}} \prod_{\omega \in \{0,1\}^k} {\mathcal C}^{|\omega|} f(n+\omega \cdot \vec h) \right)^{1/2^k},$$
where ${\mathcal C} \colon z \mapsto \overline{z}$ denotes complex conjugation. If $G$ is finite, we then define the normalized norm
$$ \|f\|_{U^k(G)} \coloneqq \| f \|_{\tilde U^k(G)} / \| 1 \|_{\tilde U^k(G)}.$$
\item[(iii)] For any function $f \colon \Z \to \C$ and natural number $N$, we define the local (normalized) Gowers uniformity norm
$$ \| f\|_{U^k[N]} \coloneqq \| f 1_{[N]} \|_{\tilde U^k(\Z)} / \| 1_{[N]} \|_{\tilde U^k(\Z)}$$
where $1_{[N]}$ is the indicator function of $[N]$.
\end{itemize}
\end{definition}

Thus for instance
\begin{align*}
 \| f\|_{U^1[N]} &= \left(\E_{n,h:\, (n,n+h) \in [N]^2} f(n) \overline{f(n+h)}\right)^{1/2} \\
&= \left| \E_{n \in [N]} f(n) \right|,
\end{align*}
where throughout this paper we use the averaging notation
$$ \E_{a \in A} f(a) \coloneqq \frac{1}{\# A} \sum_{a \in A} f(a)$$
for any non-empty set $A$ of some finite cardinality $\# A$, and  by the orthogonality of additive characters we can compute
\begin{align*}
\|f\|_{U^2[N]} &= \left(\E_{n,h:\, (n,n+h,n+k,n+h+k) \in [N]^4} f(n) \overline{f(n+h)} \overline{f(n+k)} f(n+h+k)\right)^{1/4} \\
&\asymp N^{-3/4} \left( \int_0^1 \left|\sum_{n \in [N]} f(n) e(n\theta)\right|^4\ d\theta \right)^{1/4},
\end{align*}
where we adopt the usual asymptotic notation (see Section~\ref{notation-sec}), and $e(\theta) \coloneqq e^{2\pi i \theta}$. While we will permit the functions $f$ to be complex-valued for compatibility with previous literature (particularly those that invoke the circle method), in this paper we will deal almost exclusively with real-valued functions. As is well known, the Gowers uniformity norms are indeed norms for $k \geq 2$, and seminorms for $k=1$; see for instance~\cite[Appendix B]{gt-linear}.  In particular, they obey the triangle inequality 
\begin{equation}\label{triangle}
\|f+g\|_{U^k[N]} \leq \|f\|_{U^k[N]} + \|g\|_{U^k[N]}
\end{equation}
(and similarly for the other variants of the Gowers norms in Definition~\ref{gowers-norm}), which we will rely on frequently in this paper.

The \emph{M\"obius pseudorandomness principle} (see e.g.,~\cite[p. 338]{iwaniec-kowalski}) informally makes the prediction
$$ \mu(n) \approx 0$$
in the metric given by the Gowers norms $U^k[N]$.  Similarly, the usual modification of the Cram\'er random model~\cite{cramer}, as refined by Granville~\cite{granville} in order to take into account the distribution at primes below some threshold $w$, makes the prediction
$$ \Lambda(n) \approx \Lambda_{\Cramer,w}(n)$$
for various small $2 \leq w \ll N$, where $\Lambda_{\Cramer,w} \colon \Z \to \R$ is the function
$$
 \Lambda_{\Cramer,w}(n) \coloneqq \frac{P(w)}{\phi(P(w))} 1_{(n,P(w))=1} =\prod_{p<w} \frac{p}{p-1} 1_{p \nmid n}$$
where $P(w)$ is the primorial\footnote{In some texts the constraint $p \leq w$ is used in place of $p<w$; the precise convention is not too important for our applications, but the choice $p<w$ is consistent with the conventions in~\cite{friedlander-iwaniec}.} of $w$,
$$ P(w) \coloneqq \prod_{p < w} p,$$
with $\phi$ the Euler totient function and $(n,P(w))$ the greatest common divisor of $n$ and $P(w)$.  Thus for instance $\Lambda_{\Cramer,2}=1$ (which corresponds to the original model of Cram\'er).  The precise choice of the parameter $w$ is not too important, as can be shown by the following standard sieve-theoretic calculation:

\begin{proposition}[Gowers norm stability of the Cram\'er model]\label{cramer-stable}  If $2 \leq w, z \leq Q$, then
\begin{equation}\label{qa}
 \| \Lambda_{\Cramer,w} - \Lambda_{\Cramer,z} \|_{U^k[N]} \ll \log^{-c} N + w^{-c} + z^{-c}.
\end{equation}
\end{proposition}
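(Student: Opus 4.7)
The plan is to expand the $2^k$th power of the Gowers norm directly and exploit the product (over primes) structure of the Cram\'er model. By symmetry I may assume $w \le z$; the case $w = z$ is trivial.

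First, using the real-valuedness of the Cram\'er model and Definition \ref{gowers-norm}, I would multiply out the difference inside the Gowers norm to obtain
\[
\|\Lambda_{\Cramer,w} - \Lambda_{\Cramer,z}\|_{U^k([N])}^{2^k} = \sum_{\vec \mu \in \{w,z\}^{2^k}} (-1)^{\#\{\omega: \mu_\omega = z\}} I(\vec\mu),
\]
where
\[
I(\vec\mu) := \frac{\sum_{(n,\vec h)\in \Z^{k+1}}\prod_{\omega}\Lambda_{\Cramer,\mu_\omega}(n+\omega\cdot\vec h)\,1_{[N]}(n+\omega\cdot\vec h)}{\|1_{[N]}\|_{\tilde U^k(\Z)}^{2^k}}.
\]
For each $\vec\mu$, I would then use a standard sieve-theoretic factorization --- writing $\Lambda_{\Cramer,\mu}(m) = \prod_{p<\mu}\frac{p}{p-1}1_{p\nmid m}$, expanding via M\"obius inversion, truncating the resulting sum over squarefree divisors at some level $D = N^{o(1)}$, and bounding the tail via the fundamental lemma of the sieve (legitimate because $w,z \le Q$) --- to obtain the product formula
\[
I(\vec\mu) = \prod_p \gamma_p(S_p(\vec\mu)) + O((\log N)^{-c}),
\]
where $S_p(\vec\mu) := \{\omega \in \{0,1\}^k: \mu_\omega > p\}$ and
\[
\gamma_p(S) := \left(\frac{p}{p-1}\right)^{|S|}\E_{(n,\vec h)\in(\Z/p\Z)^{k+1}}\prod_{\omega \in S} 1_{p\nmid n+\omega\cdot\vec h}.
\]

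The key observation is the uniform local estimate $\gamma_p(S) = 1 + O_k(p^{-2})$ for every $S \subseteq \{0,1\}^k$. Indeed, a direct inclusion-exclusion shows that each nonempty subsystem $\{p \mid n + \omega\cdot\vec h\}_{\omega \in R}$ has $\F_p$-rank at least $\min(|R|,2)$, yielding $\E \prod_{\omega\in S} 1_{p\nmid n+\omega\cdot\vec h} = 1 - |S|/p + O_k(p^{-2})$; multiplying by $(p/(p-1))^{|S|} = 1 + |S|/p + O_k(p^{-2})$ produces the cancellation. Splitting the prime product at $w$ (using $w \le z$) now gives
\[
\prod_p\gamma_p(S_p(\vec\mu)) = \prod_{p<w}\gamma_p(\{0,1\}^k)\cdot\prod_{w \le p < z}\gamma_p(T(\vec\mu)),
\]
where $T(\vec\mu) := \{\omega : \mu_\omega = z\}$. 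Since the first factor is $O(1)$ and independent of $\vec\mu$, interchanging the order of summation reduces the signed sum to
\[
\prod_{p<w}\gamma_p(\{0,1\}^k)\cdot\sum_{T \subseteq \{0,1\}^k}(-1)^{|T|}\prod_{w \le p < z}\gamma_p(T).
\]

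Finally, the uniform estimate combined with $\sum_{p \ge w}p^{-2} \ll 1/w$ yields $\prod_{w \le p < z}\gamma_p(T) = 1 + O(w^{-1})$ uniformly in $T$; since $\sum_T(-1)^{|T|} = 0$ and there are $2^{2^k} = O_k(1)$ terms, the alternating sum collapses to $O(w^{-1})$. Combining with the $O((\log N)^{-c})$ sieve error, taking $2^k$th roots, and exchanging the roles of $w$ and $z$ by symmetry yields the claimed bound. The main obstacle is the rigorous execution of the sieve-theoretic factorization with a genuine $(\log N)^{-c}$ error: since $P(z) = \prod_{p<z} p$ is super-polynomially large in $N$ when $z$ is close to $Q$, one cannot apply CRT on $\Z/P(z)\Z$ directly, so the argument must truncate the M\"obius expansion at level $D = N^{o(1)}$, control the tail via the fundamental lemma, and handle the $[N]$-indicator constraint carefully in each term.
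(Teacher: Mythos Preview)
Your approach is correct and runs parallel to the paper's, though organized differently. The paper first proves Corollary~\ref{cram} (via Proposition~\ref{lineq}, the fundamental lemma) showing that $\frac{\phi(W)}{W}\Lambda_{\Cramer,z}(W\cdot+b)$ is $w^{-c}$-close to the constant~$1$ on each residue class mod $W=P(w)$ with $w\le\log^{1/100}N$, then invokes the $W$-trick Fubini machinery (Corollary~\ref{slice}) to lift this to $[N]$, and finally uses the triangle inequality (inserting an intermediate $\Lambda_{\Cramer,w_0}$ with $w_0=\log^{1/100}N$) to handle general $w,z\le Q$; this last step is where the $\log^{-c}N$ term enters. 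You instead expand the $2^k$th power of the difference directly on $[N]$ into $2^{2^k}$ mixed Cram\'er correlations, evaluate each via the same sieve input, and force the cancellation through the alternating sign $\sum_T(-1)^{|T|}=0$. Both routes rest on the identical local computation $\gamma_p(S)=1+O_k(p^{-2})$.

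Your argument is more self-contained, avoiding Lemma~\ref{fubini} and Corollary~\ref{slice}; the paper's is more modular since those tools are reused elsewhere. Two minor remarks: your sieve step is exactly Proposition~\ref{lineq} with mixed sifting levels $\mu_\omega\in\{w,z\}$, a harmless variant whose proof goes through verbatim; and the error there is in fact pseudopolynomial $\exp(-c\log^{4/5}N)$, so your $(\log N)^{-c}$ bookkeeping is conservative.
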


We establish this proposition in Section~\ref{sec:sieve}.  In our applications it will be convenient to focus on the Cram\'er models $\Lambda_{\Cramer,w}, \Lambda_{\Cramer,z}$ with $w = \log^{\kappa} N$, $z = Q$, for $\kappa>0$ a sufficiently small constant which may depend on $k$ (usually we can take $\kappa = 1/100$). However, using Proposition~\ref{cramer-stable} it is not difficult to also work with other suitable choices of parameters if desired, at least up to  logarithmic decay (and probably up to \emph{pseudopolynomial decay}\footnote{By a pseudopolynomially decaying function we mean one that decays faster than $\exp(-\log^c N)$ for some $c>0$.} as well, see Remark~\ref{Effort}).

We summarize the previous Gowers uniformity results on M\"obius and von Mangoldt as follows.

\begin{theorem}[Gowers uniformity of M\"obius and von Mangoldt]\label{known-unif}\   
\begin{itemize}
\item[(i)]  (Pseudopolynomial $U^1$ uniformity) We have 
$$ \| \mu \|_{U^1[N]}, \| \Lambda - 1 \|_{U^1[N]}  \ll \exp( - c (\log N)^{3/5} (\log\log N)^{-1/5} ).$$
\item[(ii)]  (Logarithmic and strongly logarithmic $U^2$ uniformity)  We have
$$ \| \mu \|_{U^2[N]} \ll^\ineff_A \log^{-A} N$$
and
\begin{equation}\label{lambdam}
 \| \Lambda - \Lambda_{\Cramer,w} \|_{U^2[N]} \ll^\ineff \log^{-c} N + w^{-c}
\end{equation}
for all $A>0$ and all $2 \leq w \leq Q$.
\item[(iii)]  (Qualitative higher uniformity)  For any fixed $k > 2$, we have
$$ \| \mu \|_{U^k[N]} = o^\ineff(1)$$
and
\begin{equation}\label{law}
 \| \Lambda - \Lambda_{\Cramer,w} \|_{U^k[N]} \ll w^{-c} + o^\ineff(1)
\end{equation}
as $N \to \infty$ uniformly for any $2 \leq w \leq Q$.
\end{itemize}
In the asymptotic notation superscripted with $\ineff$, the implied constants are permitted to be ineffective.
\end{theorem}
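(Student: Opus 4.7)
The plan is to deduce each part from the existing analytic number theory literature rather than reproving them from scratch.

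For \emph{part (i)}, observe that $\|\mu\|_{U^1([N])} = |\E_{n \in [N]} \mu(n)|$ and $\|\Lambda - 1\|_{U^1([N])} = |\E_{n \in [N]} (\Lambda(n) - 1)|$, so both bounds are immediate consequences of the Vinogradov--Korobov form of the prime number theorem (and its M\"obius analogue), which come from the classical zero-free region for $\zeta(s)$ via Perron's formula and a standard contour shift.

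For \emph{part (ii)}, I would use the Plancherel identity
$$ \|f\|_{U^2([N])}^4 \asymp N^{-3} \int_0^1 |\hat f(\theta)|^4\, d\theta \leq N^{-3} \|\hat f\|_\infty^2 \|\hat f\|_2^2, $$
where $\hat f(\theta) = \sum_{n \leq N} f(n) e(n\theta)$. For $f = \mu$, Davenport's theorem (proved via Siegel--Walfisz, hence ineffective) gives $\sup_\theta |\hat\mu(\theta)| \ll^\ineff_A N \log^{-A} N$ for every $A$, which combined with $\|\hat\mu\|_2^2 \ll N$ from Parseval yields the strongly logarithmic bound. For $f = \Lambda - \Lambda_{\Cramer,w}$, I would run a major/minor arc decomposition: on the minor arcs both $\hat\Lambda$ (via Vinogradov's bound) and $\hat\Lambda_{\Cramer,w}$ (via a sieve-theoretic Fourier computation) enjoy small $L^\infty$ norm, while on the major arcs the Siegel--Walfisz theorem together with the definition of $\Lambda_{\Cramer,w}$ shows the two functions agree up to $\ll \log^{-c} N + w^{-c}$.

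For \emph{part (iii)}, I would cite the Green--Tao--Ziegler theorem: the bound $\|\mu\|_{U^k([N])} = o^\ineff(1)$ is obtained by combining their inverse theorem for the $U^k$ norms with the orthogonality of $\mu$ to nilsequences. The analogous bound \eqref{law} for $\Lambda - \Lambda_{\Cramer,w}$ appears in the same line of work, with the $W$-trick reducing the $\Lambda$ case to the $\mu$ case at the cost of the $w^{-c}$ error from sieve truncation. The main obstacle is getting part (ii) in the stated form for $\Lambda$: one must carefully verify that $\Lambda_{\Cramer,w}$ cancels the major arc contribution of $\hat\Lambda$ to the claimed Siegel--Walfisz precision, even though the Cram\'er model only matches congruence data modulo primes below $w$. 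Parts (i) and (iii) are essentially bookkeeping over standard references.
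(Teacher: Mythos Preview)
Your proposal is essentially correct and follows the same overall strategy as the paper: part (i) from Vinogradov--Korobov, part (ii) via Fourier/circle method using Davenport for $\mu$ and Vinogradov-type bounds for $\Lambda$, and part (iii) by citing the Green--Tao--Ziegler machinery (inverse theorem plus M\"obius--nilsequence orthogonality), with the $W$-trick and sieve comparison handling the $\Lambda_{\Cramer,w}$ term.

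The one place where your route differs from the paper is in the $\Lambda$ case of part (ii). Your bound $\|f\|_{U^2}^4 \ll N^{-3}\|\hat f\|_\infty^2 \|\hat f\|_2^2$ loses a factor of $\log N$ when $f=\Lambda-\Lambda_{\Cramer,w}$, since $\|\hat f\|_2^2 \asymp N\log N$; your major/minor arc sketch can recover this loss (minor arcs get arbitrary $\log^{-A}N$ from Vinogradov, major arcs have tiny measure), but you should be explicit that this is what you are doing, not merely bounding $\|\hat f\|_\infty$. The paper instead avoids splitting the circle by invoking the Fourier restriction estimate for primes from \cite{gt-restriction}, namely $\int_0^1 |\hat f(\theta)|^q\,d\theta \ll_q N^{q-1}$ for $q>2$, and then applies H\"older to interpolate between this and the sup bound. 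Your approach is more elementary (no restriction theory needed); the paper's is cleaner and more uniform in $w$. The paper also uses its Proposition~\ref{cramer-stable} to fix $w=\log^{1/100}N$ at the outset, absorbing the $w$-dependence into the sieve comparison rather than carrying it through the Fourier analysis.
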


A short deduction of this theorem from results stated in the literature is given in Appendix~\ref{appendix_qualitative} for the sake of completeness.

The first main objective of this paper is to quantify (and make effective) the qualitative rate of decay $o^\ineff(1)$ in Theorem~\ref{known-unif}(iii). We are able to obtain doubly logarithmic bounds which are weaker than the $k=2$ logarithmic bound in Theorem~\ref{known-unif}(ii) only by a single additional logarithm:

\begin{theorem}[Doubly logarithmic uniformity of M\"obius and von Mangoldt]\label{main} For $k\geq 2$, we have
$$ \| \mu \|_{U^k[N]} \ll (\log\log N)^{-c}$$
and
$$ \| \Lambda - \Lambda_{\Cramer,w} \|_{U^k[N]} \ll (\log\log N)^{-c} + w^{-c}$$
whenever $2 \leq w \leq Q$.
\end{theorem}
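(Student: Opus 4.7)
The strategy is to combine a quantitative inverse theorem for the Gowers $U^k$ norm with quantitative orthogonality estimates between $\mu$, $\Lambda - \Lambda_{\Cramer,w}$, and nilsequences of bounded complexity. By the triangle inequality and Proposition~\ref{cramer-stable}, I would first reduce both statements to the case of a single convenient sieve parameter, say $w = \log^\kappa N$ for a small $\kappa > 0$. Writing $f$ for either $\mu$ or $\Lambda - \Lambda_{\Cramer,w}$, the aim is then to show that if $\|f\|_{U^k([N])} \geq \delta$, then $\delta \ll (\log\log N)^{-c} + w^{-c}$.

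The first main step is to invoke a quantitative inverse theorem: if $\|f\|_{U^k([N])} \geq \delta$, then $f$ correlates, with strength $\gg \delta^{O(1)}$, with a polynomial nilsequence $n \mapsto F(g(n)\Gamma)$ of degree at most $k-1$ on a nilmanifold $G/\Gamma$ whose complexity (dimension, Mal'cev basis data, Lipschitz norm of $F$, and smoothness norm of $g$) is bounded by some function $M(\delta)$. For the scheme to deliver a doubly logarithmic saving, $M(\delta)$ should grow no faster than something like $\exp(\delta^{-O(1)})$. Such an effective inverse theorem---in contrast to the original ineffective Green--Tao--Ziegler statement proved via ergodic methods---has been developed in recent work of Manners and others, and supplies the required quantitative reduction.

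The second main step is a quantitative orthogonality estimate: for any polynomial nilsequence $n \mapsto F(g(n)\Gamma)$ of complexity at most $M$, one should have
$$ \left| \E_{n \in [N]} \mu(n) F(g(n)\Gamma) \right| \ll M^{O(1)} (\log N)^{-c},$$
and an analogous bound for $\Lambda - \Lambda_{\Cramer,w}$ with an additional $w^{-c}$ error. The standard tools here are Heath-Brown's or Vaughan's identity, used to reduce $\mu$ and $\Lambda$ to Type~I and Type~II sums, combined with a quantitative version of the Green--Tao equidistribution theorem for polynomial sequences on nilmanifolds to handle those bilinear sums. Combining the two steps, the correlation of $f$ with the inverse-theorem nilsequence is at most $\exp(M(\delta)^{O(1)}) \cdot ((\log N)^{-c} + w^{-c})$, which must also be $\gg \delta^{O(1)}$; rearranging gives $\delta \ll (\log\log N)^{-c} + w^{-c}$, as desired.

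The main obstacle is arranging both inputs to be quantitative enough to close the loop. The inverse theorem needs explicit, at most subexponential, complexity bounds, since the classical ergodic-theoretic proof provides none; and the orthogonality result needs to be carried out uniformly in the complexity of the nilsequence, with only a polynomial loss. Because the logarithmic decay in the orthogonality estimate survives only when $M(\delta)^{O(1)} \leq \log^c N$, the interplay between the complexity growth in the inverse theorem and the decay rate in the orthogonality is delicate, and it is precisely this interplay that dictates the doubly logarithmic shape of the final bound.
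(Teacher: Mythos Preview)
Your overall strategy---quantitative inverse theorem plus quantitative orthogonality with nilsequences---is exactly the skeleton of the paper's argument, but your sketch misses two essential technical ingredients without which the argument does not close.

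First, you do not address the Siegel zero obstruction. The orthogonality bound you write down, $|\E_{n\in[N]}\mu(n)F(g(n)\Gamma)|\ll M^{O(1)}(\log N)^{-c}$, is not known with effective constants: the Green--Tao argument for this correlation passes through the Siegel--Walfisz theorem in the major arc case, and the resulting constants are ineffective. The paper's fix is to introduce Siegel models $\mu_\Siegel$, $\Lambda_\Siegel$ (Definition~\ref{siegel-model}) and prove the orthogonality for $\mu-\mu_\Siegel$ and $\Lambda-\Lambda_\Siegel$ instead (Theorem~\ref{quant-ortho}), with a separate sieve-theoretic argument (Theorem~\ref{siegel-uniform}) showing that $\mu_\Siegel$ and $\Lambda_\Siegel-\Lambda_{\Cramer,Q}$ themselves have small Gowers norm. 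Moreover, the orthogonality bound one actually needs is not $M^{O(1)}(\log N)^{-c}$ but rather pseudopolynomial, $\exp(-\log^c N)$, valid for nilmanifolds whose \emph{dimension} is at most $(\log\log N)^{c_1}$ while the \emph{complexity} is allowed to be as large as $\exp(\log^{c_1}N)$. This separation of dimension from the other complexity parameters is crucial: the equidistribution theory loses doubly exponentially in the dimension (see Appendix~\ref{appendix: a}), and Manners' theorem produces dimension $O(\delta^{-O(1)})$ but complexity $\exp\exp(O(\delta^{-O(1)}))$. Your accounting lumps these together and, as written, would only deliver a triply logarithmic bound.

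Second, for the von Mangoldt case you have not dealt with the fact that $\Lambda-\Lambda_{\Cramer,w}$ is unbounded, so Manners' inverse theorem does not apply directly. The paper handles this by a densification argument \`a la Conlon--Fox--Zhao (Theorem~\ref{transf}), which transfers the inverse theorem from $1$-bounded functions to $\nu$-bounded functions for a pseudorandom majorant $\nu$, with only polynomial losses. This step is not routine: earlier transference principles incurred exponential losses (via Weierstrass approximation) that would have wrecked the quantitative bound, and the paper exploits the specific polynomial-correlation structure in Manners' proof to make densification work here.
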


This is new for $k \geq 3$; henceforth we will assume $k \geq 2$ in our arguments to avoid some minor degeneracies. We remark that this theorem (and hence all of our subsequent results) are dependent on the results in \cite{manners} (see also \cite{bloom-survey}), which are currently available in preprint form as of this time of writing.

For later use, we also state a version of Theorem~\ref{main} for $\Lambda$ where the $W$-trick has been implemented.

\begin{corollary}[$W$-tricked quantitative Gowers uniformity]\label{cor:Wtrick}
Let $w=(\log \log N)^{1/2}$ and $W=\prod_{p\leq w}p$. Then for $k\geq 2$ we have
\begin{align*}
\left\|\frac{\phi(W)}{W}\Lambda(W\cdot+b)-1\right\|_{U^k[\frac{N-b}{W}]}\ll (\log \log N)^{-c} \end{align*}
whenever $1\leq b\leq W$ is coprime to $W$.
\end{corollary}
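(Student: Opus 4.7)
My plan is to derive the corollary from Theorem \ref{main} by a $W$-trick reduction. I apply Theorem \ref{main} with the Cram\'er parameter equal to the corollary's $w = (\log\log N)^{1/2}$; by the prime number theorem, $W = \prod_{p \le w} p = \exp(O(w)) \ll Q$, so this choice is permissible and gives $\|F\|_{U^k([N])} \ll (\log\log N)^{-c}$ for $F := \Lambda - \Lambda_{\Cramer,w}$. Matching the parameters in this way allows a clean simplification: for $(b,W) = 1$, every $Wn+b$ is coprime to $W$ and hence to $P(w)$, so $\Lambda_{\Cramer,w}(Wn+b) = P(w)/\phi(P(w)) = W/\phi(W)$ (absorbing into constants a bounded discrepancy between the conventions $p \le w$ and $p < w$). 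The expression in the corollary therefore reduces to $\frac{\phi(W)}{W} F(W\cdot+b)$, and it remains to transfer the $U^k([N])$ bound on $F$ to a $U^k([M])$ bound on $F(W\cdot+b)$, where $M = \lfloor (N-b)/W\rfloor$.

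To carry out the transfer, I would expand the unnormalized Gowers norm $\|F(W\cdot+b)\cdot 1_{[M]}\|_{\tilde U^k(\Z)}^{2^k}$, substitute $m = Wn+b$ and $\vec k = W\vec h$, and Fourier-decompose the congruence constraints $m \equiv b \pmod W$ and $k_i \equiv 0 \pmod W$ into characters modulo $W$. The trivial character contributes the main term, which matches $\|F\|_{U^k([N])}^{2^k}$ up to bounded constants after normalization by $\|1_{[M]}\|_{\tilde U^k(\Z)}^{2^k} \asymp M^{k+1}$. For each non-trivial character $(a,\vec r)$, I distribute the linear phase $e_W(am + \vec r\cdot \vec k)$ across the vertices of the Gowers cube and apply the Cauchy--Schwarz--Gowers inequality, exploiting the invariance of $\|\cdot\|_{U^k}$ under multiplication by linear phases (which holds for $k \ge 2$).

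The main obstacle is that a uniform application of Cauchy--Schwarz--Gowers across all $W^{k+1}$ characters incurs a loss of $W^{(k+1)/2^k} = \exp(\Omega(\sqrt{\log\log N}))$, which would dwarf the $(\log\log N)^{-c}$ saving from Theorem \ref{main}. To close this gap I would need a sharper estimate for the non-trivial character contributions, for instance by invoking a Siegel--Walfisz-type equidistribution of $F$ along arithmetic progressions modulo $W$ (available at our tiny choice of $W$) to exhibit additional cancellation in each character sum $S(a,\vec r)$; alternatively, one can replicate the proof of Theorem \ref{main} directly in the $W$-tricked setting, with the constant function $1$ playing the role of $\Lambda_{\Cramer,w}$, which bypasses the transfer step entirely.
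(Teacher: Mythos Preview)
Your diagnosis of the obstacle is correct: the Fourier transfer from $\|F\|_{U^k([N])}$ to $\|F(W\cdot+b)\|_{U^k([M])}$ loses a factor of order $W^{(k+1)/2^k} = \exp(\Omega(\sqrt{\log\log N}))$, which swamps the $(\log\log N)^{-c}$ saving from Theorem~\ref{main}. So the first route in your proposal does not close, and your first suggested fix (invoking Siegel--Walfisz equidistribution of $F$ in progressions) does not help, since controlling $F$ in arithmetic progressions gives no direct handle on higher Gowers norms.

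Your second suggestion---work directly in the $W$-tricked setting---is exactly the right idea, and is what the paper does, but the execution requires splitting $F$ rather than treating it as a single object. The paper decomposes
\[
\tfrac{\phi(W)}{W}\Lambda(W\cdot+b)-1 \;=\; \tfrac{\phi(W)}{W}(\Lambda-\Lambda_{\Siegel})(W\cdot+b) \;+\; \tfrac{\phi(W)}{W}(\Lambda_{\Siegel}-\Lambda_{\Cramer,Q})(W\cdot+b) \;+\; \bigl(\tfrac{\phi(W)}{W}\Lambda_{\Cramer,Q}(W\cdot+b)-1\bigr),
\]
and treats each piece differently. For the middle piece, Theorem~\ref{siegel-uniform} gives a \emph{logarithmic} bound $\|\Lambda_{\Siegel}-\Lambda_{\Cramer,Q}\|_{U^k([N])}\ll \log^{-c}N$, which is strong enough to survive your Fourier transfer (the $W$ loss is $\ll \log^{o(1)}N$). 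The last piece is handled directly by Corollary~\ref{cram}, giving $\ll w^{-c}$. The first piece is the crux: here the paper does \emph{not} transfer from $[N]$ but instead invokes the bound \eqref{lambda-siegel}, which was already established inside the proof of Theorem~\ref{main-2} at the finer $W'$-tricked level with $w'=\log^{\eps}N$, and then uses Corollary~\ref{slice} to pass from $W'$ down to $W$. So the key point you are missing is that the proof of Theorem~\ref{main} for $\Lambda$ already produces a $W'$-tricked estimate as an intermediate step, and one should recycle that rather than attempt to recover it from the un-tricked statement.
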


In Corollary~\ref{cor:Wtrick}, unlike in Theorem~\ref{main}, the size of $w$ turns out to be important. Indeed, if we had $w/\log \log N\to \infty$, then for all we know there could be a Siegel zero to some modulus $q\leq Q$ such that all its prime factors divided $W$, and this would bias the main term $1$ in Corollary~\ref{cor:Wtrick}; cf. Theorem~\ref{main-2}.

\subsection{Applications to linear equations in primes and to progressions with shifted prime difference}

The main application of the qualitative uniformity result \eqref{law} in~\cite{gt-linear} was to obtain qualitative asymptotics on linear equations in the primes; now using Theorem~\ref{main} we can make that result quantitative.

\begin{theorem}[Quantitative linear equations in primes]\label{quant-thm} Let $N,d,t,L$ be positive integers, and let $\Psi = (\psi_1,\dots,\psi_t)$ be a system of affine-linear forms $\psi_i \colon \Z^d \to \Z$ of the form
$$ \psi_i(n) = n \cdot \dot \psi_i + \psi_i(0) $$
where $\dot \psi_i \in \Z^d$, $\psi_i(0) \in \Z$ are such that $|\dot \psi_i| \leq L$ and $|\psi_i(0)| \leq LN$.  Suppose that no two of the $\dot \psi_i$ are linearly dependent.  Let $\Omega \subset [-N,N]^d$ be a convex body.  Then
\begin{align}\label{quant-lin-eq}
\sum_{\vec n \in \Omega \cap \Z^d} \prod_{i=1}^t \Lambda(\psi_i(\vec n)) = \beta_\infty \prod_p \beta_p + O_{t,d,L}(N^d (\log\log N)^{-c})
\end{align}
as $N \to \infty$, where  $c=c_{t,d,L}>0$ depends only on $t,d,L$, $\Lambda$ is extended by zero to the integers, $\beta_\infty$ is the Archimedean factor
$$ \beta_\infty = \mathrm{vol}( \Omega \cap  \Psi^{-1}(\R_{>0}^t) ),$$
and for each prime $p$, $\beta_p$ is the local factor
$$ \beta_p \coloneqq \mathbb{E}_{\vec n \in (\Z/p\Z)^d} \prod_{i=1}^t \frac{p}{p-1} 1_{\psi_i(\vec n) \neq 0}$$
(viewing each $\psi_i$ also as an affine map from $(\Z/p\Z)^d$ to $\Z/p\Z$).
\end{theorem}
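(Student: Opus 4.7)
I would follow the Green--Tao framework from \cite{gt-linear} and substitute Corollary~\ref{cor:Wtrick} as the quantitative Gowers-uniformity input in place of the qualitative one used there. The three pillars are: (i)~the $W$-trick to remove local obstructions to uniformity; (ii)~expansion of the $W$-tricked $\Lambda$ around its constant model; and (iii)~the generalized von Neumann theorem of \cite{gt-linear}, which converts $U^{s+1}$-norm smallness of the error part into smallness of the weighted count, provided a pseudorandom majorant is supplied.

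\textbf{Setup and main term.} Set $w=(\log\log N)^{1/2}$ and $W=\prod_{p<w}p=(\log\log N)^{O(1)}$, and decompose the sum in \eqref{quant-lin-eq} according to the residue classes $\vec b\in[W]^d$ of $\vec n\bmod W$. Only those $\vec b$ with $\gcd(\psi_i(\vec b),W)=1$ for every $i$ contribute non-trivially. On each such class the change of variables $\vec n=W\vec n'+\vec b$ produces an affine system $\tilde\Psi=(\tilde\psi_1,\dots,\tilde\psi_t)$ of complexity controlled by $t,d,L$ on a convex body $\Omega_{\vec b}\subset[-N/W,N/W]^d$, and we may write
$$\prod_{i=1}^{t}\frac{\phi(W)}{W}\Lambda(\psi_i(\vec n))=\prod_{i=1}^{t}\bigl(1+f_{\vec b,i}(\tilde\psi_i(\vec n'))\bigr),\qquad f_{\vec b,i}(m):=\frac{\phi(W)}{W}\Lambda(Wm+\psi_i(\vec b))-1.$$
Expanding into $2^t$ subset sums, the $S=\emptyset$ term, summed over admissible $\vec b$ and combined with the standard local-to-global sieve computation as in \cite{gt-linear}, reproduces the main term $\beta_\infty\prod_{p}\beta_p$: the factor $\prod_{p<w}\beta_p$ arises from the count of admissible $\vec b$, the factor $\prod_{p\geq w}\beta_p=1+O(1/w)$ from the tail of the singular series, and $\beta_\infty$ from a direct volume computation on $\Omega$.

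\textbf{Error terms via generalized von Neumann.} For each nonempty $S\subseteq\{1,\dots,t\}$, Corollary~\ref{cor:Wtrick} gives $\|f_{\vec b,i}\|_{U^{s+1}[N/W]}\ll(\log\log N)^{-c}$ uniformly in $\vec b$, where $s=s(t,d,L)$ denotes the Cauchy--Schwarz complexity of $\tilde\Psi$ (bounded in terms of $t,d,L$ alone using the pairwise linear independence of the $\dot\psi_i$). With the pseudorandom majorant $\nu\gg 1+|f_{\vec b,i}|$ furnished by the Goldston--Y{\i}ld{\i}r{\i}m--Green--Tao enveloping sieve at level $R=N^{\eta}$ for a small $\eta>0$, the generalized von Neumann theorem bounds the $S$-contribution by
$$O_{t,d,L}\Bigl((N/W)^d\,\min_{i\in S}\|f_{\vec b,i}\|_{U^{s+1}[N/W]}^{c_{t,d,L}}\Bigr)\ll_{t,d,L}(N/W)^d(\log\log N)^{-c}.$$
Summing over the at most $W^d=(\log\log N)^{O(d)}$ admissible residues $\vec b$ preserves the shape $N^d(\log\log N)^{-c}$ after shrinking $c$ slightly.

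\textbf{Main obstacle.} The quantitative Gowers-uniformity input is now in hand via Corollary~\ref{cor:Wtrick}; the principal difficulty is bookkeeping. One must verify that each step in the Green--Tao argument---the Cauchy--Schwarz unfoldings inside the generalized von Neumann theorem, the approximation of the convex body $\Omega$ by product-type regions (or equivalently, a smooth cutoff handling the boundary), and the linear forms / correlation conditions for the enveloping sieve---incurs only polynomial losses in the final error rate, so that no Gowers-type input is needed beyond that of Corollary~\ref{cor:Wtrick} and so that the $2^t$-fold expansion and the $(\log\log N)^{O(d)}$-fold residue sum do not destroy the $(\log\log N)^{-c_{t,d,L}}$ gain. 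A secondary point is ensuring that the sieve-theoretic identification of the local factors $\beta_p$ at primes $p<w$ (from the admissible-$\vec b$ count) matches the product formula in the theorem; this is a routine computation using the definition of $P(w)$ and the hypothesis that the $\dot\psi_i$ are pairwise independent.
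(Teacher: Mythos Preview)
Your approach is essentially the same as the paper's: both follow the Green--Tao framework from \cite{gt-linear}, apply the $W$-trick with $w$ a small power of $\log\log N$, invoke Corollary~\ref{cor:Wtrick} for the quantitative Gowers-uniformity input, and feed this into the generalized von Neumann theorem with a Goldston--Y{\i}ld{\i}r{\i}m pseudorandom majorant. The paper additionally routes through the ``normal form'' reduction of \cite[\S4]{gt-linear} before applying the $W$-trick, but this is a technical convenience rather than a different strategy.

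One slip to correct: with $w=(\log\log N)^{1/2}$ one has $W=\prod_{p<w}p\asymp e^{w}=\exp((\log\log N)^{1/2})$, which is \emph{not} $(\log\log N)^{O(1)}$ as you wrote. Fortunately this does not damage the argument, since the generalized von Neumann error on each residue class is $O((N/W)^d(\log\log N)^{-c})$ and summing over $W^d$ classes gives exactly $N^d(\log\log N)^{-c}$ with no loss; the ``shrinking $c$ slightly'' you mention is unnecessary at that step.
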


Note that, in the language of~\cite{gt-linear}, the assumption that $\dot \psi_i$ are pairwise linearly independent is equivalent to these forms having ``finite Cauchy--Schwarz complexity''.

In~\cite{gt-linear}, the result of Theorem~\ref{quant-thm} was established with the qualitative error term $o_{t,d,L}^{\ineff}(N^d)$ in \eqref{quant-lin-eq} (initially under the hypotheses of the M\"obius and nilsequences conjecture and the inverse Gowers-norm conjecture, but these were later proved in~\cite{gt-mobius},~\cite{gtz-uk}).

We outline the (rather straightforward) details of the deduction of Theorem~\ref{quant-thm} from Theorem~\ref{main} in Section~\ref{sec:linprimes}.

\begin{example} In~\cite[Example 8]{gt-linear} it is shown that the number of (increasing) arithmetic progressions of primes of a given length $k \geq 2$ in $[N]$ is equal to
$$ \left( \frac{1}{2(k-1)} \prod_p \beta_p + o^\ineff(1) \right) \frac{N^2}{\log^k N} $$
where $\beta_p$ is equal to $\frac{1}{p} \left(\frac{p}{p-1}\right)^{k-1}$ when $p \leq k$, and $\left( 1 - \frac{k-1}{p} \right) \left( \frac{p}{p-1} \right)^{k-1}$ otherwise.  Inserting Theorem~\ref{quant-thm} into the arguments from~\cite{gt-linear}, the qualitative error term $o^\ineff(1)$ can now be improved to the doubly logarithmic error $O((\log\log N)^{-c})$.  This is new for $k \geq 4$.
\end{example}

Another application of Theorem~\ref{quant-thm} is to sets containing no progressions with shifted prime difference\footnote{We are indebted to Sean Prendiville for bringing this application to our attention.}. It was shown by S\'ark\"ozy~\cite{sarkozy} that (for $N$ large) any subset of $[N]$ of size $\gg N$ contains a pattern of the form $x,x+p-1$ with $p$ a prime. After several improvements~\cite{lucier},~\cite{ruzsa-sanders}~\cite{wang}, the current best known quantitative version of this, proved recently by Green~\cite{green-sarkozy}, is that 
any subset of $[N]$ of size $\geq N^{1-c}$ contains a pattern of this form. S\'ark\"ozy's theorem was later generalized to longer progressions by Frantzikinakis--Host--Kra~\cite{fhk}, and Wooley--Ziegler~\cite{wooley-ziegler}, who showed that, for any $k\geq 3$ and $N$ large enough in terms of $k$, any subset of $[N]$  of size $\gg N$ contains a pattern of the form $x,x+p-1,x+2(p-1),\ldots, x+(k-1)(p-1)$ with $p$ a prime, that is, a $k$-term arithmetic progression with shifted prime difference. These proofs however did not provide quantitative bounds for the density of a set avoiding $k$-term progressions with shifted prime difference. Using our main theorem, we can now obtain the first quantitative bound for this problem. 

\begin{theorem}[A quantitative bound for sets missing progressions with shifted prime difference]\label{thm_sarkozy}
 Let $k\geq 3$, and let N be large enough in terms of $k$. Then any subset of $[N]$ of size $\geq N(\log \log \log \log N)^{-c}$ contains a $k$-term arithmetic progression whose common difference is a shifted prime of the form $p-1$. Moreover, if $k=4$, one can replace $N(\log \log \log \log N)^{-c}$ with $N(\log \log \log N)^{-c}$ above, and if $k=3$, one can replace it with $N\exp(-(\log \log \log N)^{c})$ . 
\end{theorem}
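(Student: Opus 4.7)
The plan is a transference argument: compare the $\Lambda$-weighted count of $k$-APs with common difference $p-1$ to the ordinary (unweighted) $k$-AP count via the Gowers uniformity of $\Lambda$ from Theorem \ref{main}, and then close using a Varnavides-type supersaturation.

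Let $A \subseteq [N]$ have density $\delta = |A|/N$, and suppose $A$ contains no $k$-AP with common difference of the form $p - 1$ for a prime $p \geq 3$. I would consider
\[ S(A) \coloneqq \sum_{x, d} 1_A(x) 1_A(x+d) \cdots 1_A(x+(k-1)d) \Lambda(d+1), \]
where $(x,d)$ ranges over pairs with all $k$ arguments in $[N]$. The weight $\Lambda(d+1)$ is supported on prime powers; the hypothesis kills the contribution of actual primes $p \geq 3$, while $p=2$ (i.e.\ $d=1$) contributes only $O(N)$ and the proper prime power contribution is at most $O(N^{3/2}\log^2 N)$. Hence $S(A) = O(N^{3/2+o(1)})$.

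I would next write $\Lambda(d+1) - 1 = (\Lambda - \Lambda_{\Cramer, w})(d+1) + (\Lambda_{\Cramer, w} - \Lambda_{\Cramer, 2})(d+1)$, using $\Lambda_{\Cramer, 2} \equiv 1$, and apply a generalized von Neumann inequality to the system of $k+1$ affine-linear forms $\{x, x+d, \ldots, x+(k-1)d, d\}$. This system has bounded Cauchy--Schwarz complexity (at most $k-1$, since the form $d$ lies in the span of any two of the $x+id$), so each difference is controlled by the $U^k[N]$ norm of the corresponding weight: Theorem \ref{main} bounds $\|\Lambda - \Lambda_{\Cramer, w}\|_{U^k[N]} \ll (\log \log N)^{-c} + w^{-c}$, and Proposition \ref{cramer-stable} gives $\|\Lambda_{\Cramer, w} - \Lambda_{\Cramer, 2}\|_{U^k[N]} \ll w^{-c} + \log^{-c} N$. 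Choosing $w = (\log \log N)^{1/2}$ yields
\[ |S(A) - T(A)| \ll N^2 (\log \log N)^{-c}, \quad \text{where } T(A) \coloneqq \sum_{x,d} \prod_{i=0}^{k-1} 1_A(x + id), \]
and combined with the estimate on $S(A)$ above, $T(A) \ll N^2 (\log \log N)^{-c}$.

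The proof is then completed by a Varnavides-type supersaturation. For $k \geq 4$, Gowers's bound $r_k(M) \ll M (\log \log M)^{-c_k}$ on $k$-AP-free sets, applied to random subprogressions of $[N]$ of length $M \asymp \exp(\exp(\delta^{-O(1)}))$, gives $T(A) \gtrsim \exp(-\exp(\delta^{-O(1)})) \cdot N^2$. Comparing with the above upper bound on $T(A)$ and inverting the double exponential yields $\delta \gg (\log \log \log \log N)^{-c}$ as claimed. For $k = 3$, the sharper Roth-type bound $r_3(M) \ll M (\log M)^{-c}$ replaces an outer exponential by a polynomial, producing $T(A) \gtrsim \exp(-\delta^{-O(1)}) \cdot N^2$ and hence the improved $\delta \gg (\log \log \log N)^{-c}$.

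The main technical obstacle is executing the generalized von Neumann step with the unbounded weight $\Lambda - \Lambda_{\Cramer, w}$, since the standard formulation assumes $L^\infty$-bounded weights. This is resolved either by using the pointwise bound $\Lambda \leq \log N$ and absorbing the logarithmic loss into the constant $c$, or by invoking a Selberg-sieve pseudorandom majorant in the style of Green--Tao, which verifies the linear-forms conditions required by the Cauchy--Schwarz manipulations and permits a clean Gowers-norm bound.
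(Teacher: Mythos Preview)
There is a genuine gap in the second step. You invoke Proposition~\ref{cramer-stable} to claim $\|\Lambda_{\Cramer,w} - \Lambda_{\Cramer,2}\|_{U^k[N]} \ll w^{-c} + \log^{-c} N$, but the stated bound in that proposition is $\ll \log^{-c} N + w^{-c} + z^{-c}$, and with $z=2$ the term $2^{-c}$ is a constant, not small. This is not a citation slip but an actual obstruction: $\Lambda_{\Cramer,w} - 1$ is \emph{not} Gowers uniform. Already for $w=3$ one has $\Lambda_{\Cramer,3}(n) - 1 = (-1)^{n+1}$, which has $U^k[N]$ norm equal to $1$. Equivalently, $\|\Lambda - 1\|_{U^k[N]} \asymp 1$ since $\Lambda(\cdot+1)$ correlates with the parity character (primes other than $2$ are odd, so $d=p-1$ is even). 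A concrete counterexample to the conclusion $|S(A) - T(A)| \ll N^2(\log\log N)^{-c}$: take $A$ to be the even integers in $[N]$; then every $k$-AP in $A$ has even common difference $d$, whence $d+1$ is odd, and the average of $\Lambda$ over odd integers is $2+o(1)$, giving $S(A) = (2+o(1))T(A)$ rather than $(1+o(1))T(A)$.

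This is exactly the obstruction that the $W$-trick is designed to remove, and the paper's proof uses it: one first pigeonholes $A$ into a residue class $b \pmod W$ with $W = P((\log\log N)^{1/2})$, passes to the set $A' = \{n : Wn+b \in A\}$, and then weights the common difference by $\Lambda'(Wd+1)$. On this scale Corollary~\ref{cor:Wtrick} gives $\bigl\|\tfrac{\phi(W)}{W}\Lambda'(W\cdot+1) - 1\bigr\|_{U^k[N/W]} \ll (\log\log N)^{-c}$, which is the correct Gowers-uniformity input for the generalized von Neumann step. After that, the Varnavides argument and the Gowers (respectively, Green--Tao for $k=3$) density bounds finish the proof exactly as you outline. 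So your overall architecture is right, but the comparison must be carried out \emph{after} the $W$-trick, not before.
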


The proof of this is given in Section~\ref{sec:sarkozy}.

\begin{remark} It is likely that one can similarly now make other qualitative consequences of \eqref{law} quantitative.  Certainly the version of the generalized Hardy--Littlewood conjecture in~\cite[Conjecture 1.2]{gt-linear} (in the finite complexity case) can now be made quantitative, with doubly logarithmic savings, in a manner perfectly analogous to Theorem~\ref{quant-thm}, as can the version of the main theorem in~\cite[Theorem 1.8]{gt-linear}; we omit the details.  The more recent asymptotics on linear inequalities in primes in~\cite{walker} are also likely to now have a doubly logarithmic quantitative version, but we do not pursue this matter here.

Lastly, one can also use Theorem~\ref{quant-thm} to quantify a result of the authors~\cite{tt-oddorder} on the logarithmically averaged Chowla conjecture for odd order correlations (whose proof relied on the Gowers uniformity of $\Lambda$). A back of the envelope calculation suggests that one could quantify the error term there, for fixed odd $k\geq 3$, to triply logarithmic; thus,
\begin{align}\label{oddorder}
\frac{1}{\log x}\sum_{n\leq x}\frac{\mu(n+h_1)\mu(n+h_2)\cdots \mu(n+h_k)}{n}\ll (\log \log \log x)^{-c}    
\end{align}
for any fixed integers $0\leq h_1< \cdots < h_k$ (and the same with the Liouville function in place of $\mu$). Very briefly, by the entropy decrement argument~\cite[Theorem 3.1]{tt-oddorder} one can locate a scale $\exp((\log \log x)^{1/2})\leq P\leq \log x$ such that the left-hand side of \eqref{oddorder} can be replaced up to triply logarithmic error term with
\begin{align*}
\frac{(-1)^k}{\log \log P}\sum_{p\leq P}\frac{1}{p}\frac{1}{\log x}\sum_{n\leq x}\frac{\mu(n+ph_1)\mu(n+ph_2)\cdots \mu(n+ph_k)}{n}.   
\end{align*}
One would then split the $p$ sum into dyadic scales and proceed as in~\cite{tt-oddorder} by replacing the average over primes $p$ with an average over $w$-rough integers, using Theorem~\ref{quant-thm} and a quantitative version of the generalized von Neumann theorem as a substitute for Theorem~\ref{known-unif}, producing an admissible $O((\log \log P)^{-c})$ error term. The triply logarithmic error terms at this step are much worse than any other error terms arising in the rest of the proof, therefore leading to \eqref{oddorder}. We leave the details to the interested reader.
\end{remark}

\section{Discussion and set-up of the proof}

Until recently, there were two main obstacles to achieving the sort of quantitative (and effective) bound stated in Theorem~\ref{main}.  Firstly, the first proofs of the inverse conjecture for the Gowers norms in the large $k$ regime $k \geq 5$ were ineffective (using tools such as nonstandard analysis) and did not provide any quantitative dependence of constants.  Secondly, in order to overcome certain logarithmic losses in the estimates, it was necessary to invoke Siegel's theorem to control the correlation of the M\"obius function with nilsequences, and the decay rate in the $o(1)$ bounds in Theorem~\ref{known-unif}(iii) then depended on the rate at which the constants in Siegel's theorem $|L(1,\chi)| \gg^\ineff_\eps q^{-\eps}$ depended on $\eps$, which is completely ineffective with known methods.

The first issue was resolved recently with the quantitative inverse theorem of Manners~\cite{manners}, which provided a good quantitative dependence on all parameters in the inverse theory of Gowers norms.  
To resolve the second issue, we perform the technique of isolating out the contribution of a potential Siegel zero to obtain more refined approximations 
\begin{align*}
 \mu &\approx \mu_\Siegel(n) \\
 \Lambda &\approx \Lambda_\Siegel(n)
\end{align*}
to the arithmetic functions $\mu,\Lambda$.  To make this precise we introduce some notation:

\begin{definition}[Siegel model]\label{siegel-model}\ Recall that the quantity $Q$ was defined in \eqref{Q-def}.  
\begin{itemize}
\item[(i)]  We define a \emph{$Q$-Siegel zero} to be a real number $1 - \frac{c_0}{\log Q} < \beta < 1$ for which there exists a primitive real Dirichlet character $\chi_\Siegel$ (which we call the \emph{$Q$-Siegel character}) of conductor $q_\Siegel < Q$ such that $L(\beta,\chi_\Siegel)=0$, where $L(s,\chi)$ denotes the Dirichlet $L$-function associated to $\chi$.  Here $c_0$ is a sufficiently small absolute constant (and henceforth all implied constants are permitted to depend on $c_0$).  Note from the Landau--Page theorem (see e.g.,~\cite[Corollary 11.10]{mv}) that if a $Q$-Siegel zero exists, then it is unique (and similarly for the $Q$-Siegel character), and the zero $\beta$ is simple (so that $L'(\beta,\chi_\Siegel) \neq 0$).
\item[(ii)]  We define the \emph{$Q$-Siegel model} $\Lambda_{\Siegel}$ for the von Mangoldt function $\Lambda$ to be
$$ \Lambda_{\Siegel}(n) \coloneqq \Lambda_{\Cramer,Q}(n) = \frac{P(Q)}{\phi(P(Q))} 1_{(n,P(Q))=1}$$
if no $Q$-Siegel zero exists, and
$$ \Lambda_{\Siegel}(n) \coloneqq \Lambda_{\Cramer,Q}(n) \left( 1 - 
n^{\beta-1} \chi_\Siegel(n) \right)$$
otherwise.
\item[(iii)]  We define the \emph{$Q$-Siegel model} $\mu_\Siegel$ for the M\"obius function $\mu$ to be
$$ \mu_\Siegel(n) \coloneqq 0$$
if no $Q$-Siegel zero exists, and
\begin{equation}\label{mus-def}
 \mu_\Siegel(n) \coloneqq \mu_\local * \mu'(n)
\end{equation}
otherwise, where $\mu_\local$ is the local M\"obius function
$$ \mu_\local(n) \coloneqq \mu(n) 1_{n|P(Q)},$$
$\mu'$ is the function
\begin{equation}\label{mup-def}
 \mu'(n) \coloneqq \alpha n^{\beta-1} \chi_\Siegel(n) 1_{(n,P(Q))=1} = \alpha \frac{\phi(P(Q))}{P(Q)} (\Lambda_{\Cramer,Q}(n) - \Lambda_\Siegel(n)),
\end{equation}
$\alpha$ is the quantity
\begin{equation}\label{alpha-def}
\alpha \coloneqq \frac{1}{L'(\beta,\chi_\Siegel)} \prod_{p<Q} \left(1-\frac{1}{p}\right)^{-1} \left(1-\frac{\chi_\Siegel(p)}{p^\beta}\right)^{-1},
\end{equation}
and $\mu_\local * \mu'$ is the Dirichlet convolution of $\mu_\local$ and $\mu'$:
$$ \mu_\local * \mu'(n) \coloneqq \sum_{d|n} \mu_\local(d) \mu'(n/d).$$
(Note from the supports of $\mu_\local, \mu'$ that at most one term in this sum is non-zero for any given $n$.)
\end{itemize}
\end{definition}

The significance of these models is that $\Lambda$ and $\Lambda_{\Siegel}$ have very nearly the same statistics on arithmetic progressions (with error terms that improve over the main term by pseudopolynomial factors $O(\exp(-\log^c N))$, which are superior to the strongly logarithmic gains $O^\ineff_A(\log^{-A} N)$ provided by the Siegel--Walfisz theorem), and similarly for $\mu$ and $\mu_{\Siegel}$.  Indeed, in Section~\ref{sec:mobius} we will show the following estimates:

\begin{proposition}[Pseudopolynomial equidistribution in arithmetic progressions]\label{equiprop}  For any arithmetic progression $P \subset [N]$, we have
\begin{equation}\label{mu-equi-p}
 \sum_{n \in P} (\mu(n) - \mu_\Siegel(n)) \ll N \exp(-c \log^{1/10} N).
\end{equation}
and
\begin{equation}\label{lam-equi-p}
 \sum_{n \in P} (\Lambda(n) - \Lambda_{\Siegel}(n)) \ll N \exp(-c \log^{1/10} N)
\end{equation}
\end{proposition}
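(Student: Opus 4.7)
The plan is to combine the classical Perron/explicit formula approach, using the Landau--Page zero-free region, with direct sieve-theoretic evaluations of $\Lambda_\Siegel$ and $\mu_\Siegel$ on arithmetic progressions. The point is that the Cram\'er weights and the constant $\alpha$ in Definition \ref{siegel-model} are engineered so that the Siegel models reproduce precisely the contour residues that survive outside the zero-free region: the pole of $\zeta$ at $s=1$ (for $\Lambda$) and the putative Siegel zero at $s=\beta$ (for both $\Lambda$ and $\mu$, when it exists).

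Write $P = \{n \leq N : n \equiv a \pmod{q}\}$. The case $q > Q$ is immediate from the trivial pointwise bounds $|\Lambda|,|\Lambda_\Siegel|\ll\log N$ and $|\mu|\leq 1$, $|\mu_\Siegel|\ll\tau(n)$ (the last bound using $|\alpha|\ll\log^{O(1)} N$, which follows from the standard inequalities $L(1,\chi_\Siegel)\gg(\log Q)^{-1}$ and $L'(\beta,\chi_\Siegel)\gg L(1,\chi_\Siegel)/(1-\beta)$), combined with $|P|\ll N/Q$. So we may assume $q\leq Q$ and, by support considerations, $(a,q)=1$. Expanding by orthogonality of Dirichlet characters mod $q$ reduces both \eqref{mu-equi-p} and \eqref{lam-equi-p} to comparing $\sum_{n\leq N}\Lambda(n)\chi(n)$ and $\sum_{n\leq N}\mu(n)\chi(n)$ against the corresponding twisted sums of $\Lambda_\Siegel$ and $\mu_\Siegel$. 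Applying Perron to $-L'/L(s,\chi)$ and $1/L(s,\chi)$ respectively and shifting the contour to $\sigma = 1 - c/\log Q = 1 - c\log^{-1/10} N$, Landau--Page guarantees that the only residues picked up are the pole of $\zeta$ at $s=1$ (contributing $+N$ to the $\Lambda$ side when $\chi=\chi_0$) and, if a $Q$-Siegel zero exists, the zero at $s=\beta$ when $\chi$ is induced by $\chi_\Siegel$ (contributing $-N^\beta/\beta$ for $\Lambda$ and $+N^\beta/(\beta L'(\beta,\chi_\Siegel))$ for $\mu$); the remainder integral is $\ll N\exp(-c\log^{9/10} N)$, far stronger than required.

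On the Siegel-model side, three sieve/Euler-product computations match these contributions. First, $\sum_{n\in P}\Lambda_{\Cramer,Q}(n) = N/\phi(q)$ up to pseudopolynomial error by the Fundamental Lemma of Sieve Theory; the crucial cancellation---the Cram\'er normalization $P(Q)/\phi(P(Q))$ against the sieve density $(\phi(P(Q))/P(Q))\cdot(q/\phi(q))$---relies on every prime divisor of $q\leq Q$ being counted in $P(Q)$. Second, the twisted piece $\sum_{n\in P}\Lambda_{\Cramer,Q}(n)\,n^{\beta-1}\chi_\Siegel(n)$ is handled by the same sieve plus a standard incomplete character-sum estimate for $\chi_\Siegel$, reproducing the Siegel term for $\Lambda$. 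Third, for $\mu_\Siegel$, opening the convolution $\mu_\local\ast\mu'$ and evaluating the rough sum $\sum_{m\leq M,\,(m,P(Q))=1}\chi_\Siegel(m)m^{\beta-1}$ by Abel summation with Siegel-zero asymptotics for $L(s,\chi_\Siegel)$ near $s=\beta$, the constant $\alpha$ from \eqref{alpha-def} is calibrated exactly so that the final expression matches $+N^\beta/(\beta L'(\beta,\chi_\Siegel))$ with the correct $\chi_\Siegel(a)$-weighting.

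The main obstacle is this last calculation: tracking primes $p<Q$ in $P(Q)$, primes $p<Q$ dividing the Siegel conductor $q_\Siegel$ (where $\chi_\Siegel(p)=0$), and primes dividing $q$, and verifying that the Euler-product identity encoded by $\alpha$ converts these pieces into the correct $L'(\beta,\chi_\Siegel)^{-1}$ normalization. By comparison, the $\Lambda$ case is substantially more transparent, and all Perron truncation errors are easily absorbed into the target $N\exp(-c\log^{1/10} N)$.
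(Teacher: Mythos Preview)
Your overall strategy coincides with the paper's: use the explicit formula with the Landau--Page zero-free region for the arithmetic side (the paper quotes \cite[Theorem 5.27]{iwaniec-kowalski} for $\Lambda$ and \cite[Exercise 11.3.12]{mv} for $\mu$ rather than redoing Perron, but this is the same content), and match the resulting main terms against direct sieve evaluations of the Siegel models. The three-way split into the Cram\'er piece, the twisted Siegel piece for $\Lambda$, and the convolution unfolding for $\mu_\Siegel$ with the $\alpha$-calibration is exactly what the paper does; your identification of the $\mu_\Siegel$ calculation as the delicate part is accurate.

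There is however a genuine gap. Your sentence ``and, by support considerations, $(a,q)=1$'' is false for the M\"obius case: neither $\mu$ nor $\mu_\Siegel$ is supported on integers coprime to $q$. (It is essentially true for $\Lambda$ and exactly true for $\Lambda_\Siegel$, since $\Lambda_{\Cramer,Q}$ vanishes unless $(n,P(Q))=1$ and every prime factor of $q\leq Q$ divides $P(Q)$; but for $\mu$ there is no such restriction, and $\mu_\Siegel = \mu_\local * \mu'$ has the $\mu_\local$ factor precisely to accommodate the $Q$-smooth part of $n$.) The paper handles this with an explicit reduction: writing $d=(a,q)$ and using the multiplicativity relations $\mu(dn)=\mu(d)\mu(n)1_{(n,d)=1}$ and (from Definition~\ref{siegel-model}) $\mu_\Siegel(dn)=\mu(d)\mu_\Siegel(n)1_{(n,d)=1}$, one splits the sum over the progression $a\pmod q$ into at most $d\leq Q$ sums over primitive progressions to modulus at most $q$, and then proceeds. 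Without this step, the character expansion you invoke does not apply. Once you insert this reduction, the rest of your outline matches the paper.
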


\begin{remark}
The construction of $\mu_\Siegel$ appears to be complicated, but it is a multiplicative construction and can be justified as follows. If $\chi$ is any character induced from $\chi_\Siegel$ of some period $q | [q_\Siegel,P(Q)]$, a short calculation reveals the Euler products
\begin{equation}\label{dir-1}
\begin{split} 
\sum_{n=1}^\infty \frac{\mu_\Siegel\chi(n) }{n^s}
&= 
\frac{\zeta(s+1-\beta)}{L'(\beta,\chi_\Siegel)}  \prod_{\substack{p<Q:\\ p \mid q}} \frac{1-\frac{\chi_\Siegel(p)}{p^s}}{1-\frac{\chi_\Siegel(p)}{p^\beta}}\prod_{p<Q} \frac{1-\frac{1}{p^{s+1-\beta}}}{1-\frac{1}{p}} 
\\
&\quad \times \prod_{p|q} \left(1-\frac{\chi_\Siegel(p)}{p^\beta}\right)^{-1} 
\end{split}
\end{equation}
and
\begin{equation}\label{dir-2}
\sum_{n=1}^\infty \frac{\mu\chi(n)}{n^s}
= \frac{1}{L(s,\chi_\Siegel)} \prod_{p|q} \left(1-\frac{\chi_\Siegel(p)}{p^s}\right)^{-1}
\end{equation}
whenever $\mathrm{Re}(s) > 1$.  One can then check that the meromorphic continuations of the two Dirichlet series \eqref{dir-1}, \eqref{dir-2} both have a simple pole at $s=\beta$ with the same residue (and when $\chi$ is not induced from $\chi_\Siegel$ there is no such pole), which helps justify why we expect $\mu_\Siegel$ to be a good approximation to $\mu$.  We experimented with simpler models to $\mu$ than $\mu_\Siegel$, but in order to get the pseudopolynomial error terms $\exp(-\log^c N)$ in \eqref{mu-equi-p} it seems essential that the model $\mu_\Siegel$ behaves almost identically to $\mu$ with respect to primes $p$ as large as $\exp(\log^c N)$, which necessitates a complicated construction such as \eqref{mus-def}.  We remark that a similar (though slightly less refined) approximant $\lambda_\Siegel$ to the Liouville function $\lambda$ was introduced by Germ\'an and Katai in~\cite{german-katai}, and recently used in~\cite{chinis} to establish Chowla's conjecture in the presence of a Siegel zero.
\end{remark}

For future reference we also observe the following crude pointwise bounds on $\Lambda, \mu$ and their approximate models:

\begin{lemma}[Pointwise bounds]\label{point-bound}  For $n \in [N]$ and $2 \leq w \leq Q$, one has
$$ \Lambda(n), \Lambda_{\Cramer,w}(n), \Lambda_\Siegel(n) \ll \log N$$
and
$$ \mu(n), \mu_\Siegel(n) \ll 1.$$
\end{lemma}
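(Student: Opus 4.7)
The proof is a collection of size estimates that can be handled factor by factor. First, the bounds $\Lambda(n) \leq \log n \leq \log N$ and $|\mu(n)| \leq 1$ are immediate from the definitions of the classical von Mangoldt and M\"obius functions. For $\Lambda_{\Cramer,w}(n) = \prod_{p<w} \frac{p}{p-1} 1_{p\nmid n}$, I would just drop the indicators and apply Mertens' third theorem to estimate $\prod_{p<w} \frac{p}{p-1} \ll \log w$; since $w \leq Q = \exp(\log^{1/10} N)$ this gives $\log w \leq \log^{1/10} N \ll \log N$.

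For $\Lambda_\Siegel$, the bound in the case of no $Q$-Siegel zero is the previous estimate. In the Siegel zero case, $\Lambda_\Siegel(n) = \Lambda_{\Cramer,Q}(n)(1 - n^{\beta-1}\chi_\Siegel(n))$, and since $0 < \beta < 1$ and $n \geq 1$ we have $0 < n^{\beta-1} \leq 1$, while $|\chi_\Siegel(n)| \leq 1$, so the extra factor is at most $2$ in absolute value. This leaves $\Lambda_\Siegel(n) \ll \log N$ as before.

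The only substantive task is $\mu_\Siegel(n) \ll 1$ in the Siegel zero case. Here I would first exploit the structure of the Dirichlet convolution in \eqref{mus-def}: in $\mu_\Siegel(n) = \sum_{d|n} \mu_\local(d)\mu'(n/d)$, the factor $\mu_\local(d)$ forces $d \mid P(Q)$ while $\mu'(n/d)$ forces $(n/d, P(Q)) = 1$. Together these pin down $d$ uniquely (as the product of distinct primes $p<Q$ dividing $n$), with the sum being empty unless $n$ is squarefree in its $<Q$-part. Consequently $|\mu_\Siegel(n)| \leq |\mu'(n/d)| \leq |\alpha|(n/d)^{\beta-1} \leq |\alpha|$.

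It then remains to show $|\alpha| \ll 1$. The plan is to use the Taylor expansion $L(1,\chi_\Siegel) = L'(\beta,\chi_\Siegel)(1-\beta) + O((1-\beta)^2 \log^3 q_\Siegel)$, together with the Siegel zero condition $1-\beta \leq c_0/\log Q$, to deduce $|L'(\beta,\chi_\Siegel)| \asymp L(1,\chi_\Siegel)/(1-\beta)$. For the two Euler products in \eqref{alpha-def}, Mertens gives $\prod_{p<Q}(1-1/p)^{-1} \asymp \log Q$, while a comparison between $1/p^\beta$ and $1/p$ (using $p^{1-\beta} \leq Q^{1-\beta} = O(1)$ uniformly for $p<Q$) shows $\prod_{p<Q}(1-\chi_\Siegel(p)/p^\beta)^{-1} \asymp \prod_{p<Q}(1-\chi_\Siegel(p)/p)^{-1} \asymp L(1,\chi_\Siegel)$. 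Multiplying everything together, the $L(1,\chi_\Siegel)$ factors cancel and one is left with $|\alpha| \ll (1-\beta)\log Q \ll c_0 \ll 1$. The main point to be careful about is this bound for $\alpha$; the rest is routine. This establishes $\mu_\Siegel(n) \ll 1$ and completes the lemma.
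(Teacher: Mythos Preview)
Your reduction is exactly the paper's: all bounds except $\mu_\Siegel(n)\ll 1$ are immediate from Mertens' theorem, and for $\mu_\Siegel$ you correctly observe that at most one term in the convolution survives, so everything comes down to $|\alpha|\ll 1$. The paper states this and defers to Lemma~\ref{alpha-bound}.

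Where your argument diverges is in the proof that $|\alpha|\ll 1$, and here there is a genuine gap. Your two key intermediate claims are not justified as written. First, the Taylor step $|L'(\beta,\chi_\Siegel)| \asymp L(1,\chi_\Siegel)/(1-\beta)$ would require the remainder $(1-\beta)^2\log^3 q_\Siegel$ to be small compared to $L(1,\chi_\Siegel)$; but $L(1,\chi_\Siegel)$ is itself tiny in the Siegel regime (of size $\ll (1-\beta)\log^2 q_\Siegel$ by the mean value theorem), and when $q_\Siegel$ is close to $Q$ the error term $(1-\beta)\log^3 q_\Siegel \asymp c_0\log^2 Q$ can swamp the main term. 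Second, the identification $\prod_{p<Q}(1-\chi_\Siegel(p)/p^\beta)^{-1}\asymp L(1,\chi_\Siegel)$ compares a partial Euler product at $s=\beta$ with the full one at $s=1$; controlling the tail $\prod_{p\geq Q}(1-\chi_\Siegel(p)/p)$ is not automatic (it involves $\sum_{p\geq Q}\chi_\Siegel(p)/p$, which carries its own Siegel-zero contribution), and the shift from $\beta$ to $1$ introduces a factor $\exp\bigl((1-\beta)\sum_{p<Q}\chi_\Siegel(p)\log p/p\bigr)$ that again depends on $L'/L$ near the zero.

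Your heuristic that the $L(1,\chi_\Siegel)$ factors cancel is morally right, but the two asymptotics are not individually true in the required uniformity; only their product is controlled. The paper handles this by never separating them: it sets
\[
F(s)=\frac{1}{L(s,\chi_\Siegel)}\prod_{p<Q}\Bigl(1-\frac{\chi_\Siegel(p)}{p^s}\Bigr)^{-1},
\]
whose residue at $\beta$ is $\alpha\prod_{p<Q}(1-1/p)\asymp \alpha/\log Q$, and bounds $F$ by $O(1)$ on the circle $|s-\beta|=2c_0/\log Q$. At the rightmost point one has the convergent tail product $F(s_0)=\prod_{p\geq Q}(1-\chi_\Siegel(p)/p^{s_0})\ll 1$; elsewhere on the circle one integrates the bound $F'/F\ll\log Q$ coming from $L'/L\ll\log Q$ plus Mertens. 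The residue theorem then gives $\alpha\ll 1$ directly, without ever needing the two separate asymptotics you invoke.
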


\begin{proof} All of these bounds are either trivial or immediate consequences of Mertens' theorem, except for the bound on $\mu_\Siegel$, which would follow if the quantity $\alpha$ in \eqref{alpha-def} were bounded.  This turns out to follow from standard bounds on the $L$-function $L(s,\chi_\Siegel)$ near a $Q$-Siegel zero $\beta$; see Lemma~\ref{alpha-bound}.
\end{proof}

In view of Proposition~\ref{cramer-stable} and the triangle inequality \eqref{triangle}, Theorem~\ref{main} then follows from the following two statements.

\begin{theorem}[Siegel corrections are logarithmically Gowers uniform]\label{siegel-uniform}  We have
\begin{equation}\label{musig}
 \| \mu_\Siegel \|_{U^k[N]} \ll q_\Siegel^{-c} \ll \log^{-c} N 
\end{equation}
and
\begin{equation}\label{lamsig}
 \| \Lambda_{\Siegel} - \Lambda_{\Cramer,Q} \|_{U^k[N]} \ll q_\Siegel^{-c} \ll \log^{-c} N
\end{equation}
with the convention that the expression $q_\Siegel^{-c}$ vanishes when no $Q$-Siegel zero exists.
\end{theorem}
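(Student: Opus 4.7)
The plan is to exploit the explicit structure of both Siegel corrections as multiplicative twists of the real primitive Dirichlet character $\chi_\Siegel$ (of conductor $q_\Siegel<Q$) by smooth and sieve weights, and to reduce both bounds to a Weil-type estimate for the Gowers norm of a sieved character.

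\textbf{Step 1 (common form).} By the definition of the Siegel model,
$$\Lambda_\Siegel(n)-\Lambda_{\Cramer,Q}(n)=-\Lambda_{\Cramer,Q}(n)\,n^{\beta-1}\chi_\Siegel(n),$$
already a sieved twist of $\chi_\Siegel$. For $\mu_\Siegel=\mu_\local*\mu'$, the disjoint supports of $\mu_\local$ and $\mu'$ force the factorization $n=dm$ where $d$ is the squarefree $Q$-smooth part of $n$ and $m$ is the $Q$-rough part, giving $\mu_\Siegel(n)=\mu(d)\alpha m^{\beta-1}\chi_\Siegel(m)$ (and zero if the $Q$-smooth part of $n$ is not squarefree). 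Fibering the $U^k$ norm over $d$ via the triangle inequality, and using Rankin's trick to restrict to $d$ bounded by a small power of $N$, reduces the $\mu_\Siegel$ bound to the $\Lambda_\Siegel-\Lambda_{\Cramer,Q}$ bound on shifted sub-intervals.

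\textbf{Step 2 (removing the smooth weight).} Since $|1-\beta|\ll 1/\log Q$, the weight $n^{\beta-1}$ is slowly varying in a logarithmic sense, and a standard Abel summation across dyadic scales replaces it by a constant in the Gowers norm with negligible loss. The problem reduces to the core estimate
$$\bigl\|\chi_\Siegel(\cdot)\,1_{(\cdot,P(Q))=1}\bigr\|_{U^k([M])}\ll q_\Siegel^{-c_k}$$
uniformly for $M$ in the relevant range.

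\textbf{Step 3 (Weil bound).} I would expand the $U^k$ norm directly. Using that $\chi_\Siegel$ is real, the inner sum over $n$ for each shift $\vec h$ becomes
$$\sum_n\chi_\Siegel\!\Bigl(\prod_{\omega\in\{0,1\}^k}(n+\omega\cdot\vec h)\Bigr)\prod_\omega 1_{(n+\omega\cdot\vec h,P(Q))=1}.$$
When the $2^k$ residues $-\omega\cdot\vec h\pmod{q_\Siegel}$ are distinct (the generic case, outside a set of $\vec h$ of density $O(q_\Siegel^{-1})$), the polynomial inside $\chi_\Siegel$ is squarefree modulo every prime dividing $q_\Siegel$, and Weil's theorem (via CRT) yields a saving of $q_\Siegel^{1/2}$ per period of $n$; the degenerate $\vec h$ contribute admissibly under the trivial bound. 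The sieve factor is unfolded by the fundamental lemma of sieve theory, and the same Weil bound applies on each of the resulting arithmetic progressions, yielding the required $q_\Siegel^{-c}$ saving. The passage from $q_\Siegel^{-c}$ to $\log^{-c}N$ then follows from the effective lower bound $L(1,\chi_\Siegel)\gg q_\Siegel^{-1/2}$ combined with the upper bound $L(1,\chi_\Siegel)\ll(1-\beta)\log^2 q_\Siegel$, which forces $q_\Siegel\gg(\log N)^{c'}$ whenever a Siegel zero with $\beta>1-c_0/\log Q$ exists.

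\textbf{Main obstacle.} The principal difficulty is Step 3: coupling the Weil character sum bound with the Cram\'er sieve weight $1_{(\cdot,P(Q))=1}$. Since $P(Q)=\exp(\exp(\log^{1/10}N))$ vastly exceeds $N$, direct periodicity modulo $P(Q)$ is unavailable; a fundamental-lemma decomposition produces many terms, each requiring the Weil bound on a shifted arithmetic progression and careful tracking of the $q_\Siegel$-dependence throughout. A secondary difficulty is the fibration over $d$ in Step 1 for $\mu_\Siegel$: the smooth part $d$ of $n$ can occasionally be large, so one must invoke Rankin-type estimates to show the contribution of such $n$ is negligible.
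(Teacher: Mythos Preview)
Your approach for the $\Lambda_\Siegel - \Lambda_{\Cramer,Q}$ estimate is essentially the paper's: remove $n^{\beta-1}$ by an integral identity, split into residues modulo $q_\Siegel$, evaluate the Cram\'er-weighted count by the fundamental lemma (packaged in the paper as Proposition~\ref{lineq}), and extract the $q_\Siegel^{-c}$ saving from a Weil-bound computation of $\|\chi_\Siegel\|_{U^k(\Z/q_\Siegel\Z)}$ (the paper's Lemma~\ref{gnc}). Steps~2 and~3 are sound.

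The gap is in Step~1 for $\mu_\Siegel$. Writing $\mu_\Siegel(n)=\sum_{d} \mu_\local(d)\,\mu'(n/d)\,1_{d\mid n}$ and applying the Gowers triangle inequality over $d$ gives
\[
\|\mu_\Siegel\|_{U^k([N])} \;\le\; \sum_{d\;Q\text{-smooth}} \bigl\|\mu'(\cdot/d)\,1_{d\mid\cdot}\bigr\|_{U^k([N])}
\;\asymp\; \sum_{d\;Q\text{-smooth}} d^{-(k+1)/2^k}\,\|\mu'\|_{U^k([N/d])}.
\]
Since $(k+1)/2^k<1$ for $k\ge2$, the Euler product $\prod_{p<Q}(1-p^{-(k+1)/2^k})^{-1}$ is of order $\exp\bigl(c\,Q^{1-(k+1)/2^k}\bigr)$, and even after Rankin-truncating to $d\le D$ the head sum is this large once $D$ is big enough for the tail to be negligible (smooth-number bounds only kill the tail for $D\approx\exp(\log^{1/2}N)$). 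This loss swamps the $q_\Siegel^{-c}\ge\exp(-c\log^{1/10}N)$ saving, so the triangle inequality over the $Q$-smooth part simply cannot close here.

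The paper avoids this by a two-stage decomposition. First it splits by $(n,q_\Siegel)$ using a weighted triangle inequality (Lemma~\ref{variant}) that produces the convergent sum $\sum_{d\mid q_\Siegel}1/d\ll\log\log q_\Siegel$. Second, for the remaining convolution with $\mu_\local$ over $d'$ coprime to $q_\Siegel$, it does \emph{not} use the triangle inequality: it expands the $2^k$th power of the Gowers norm into a sum over tuples $(d'_\omega)_{\omega\in\{0,1\}^k}$ and establishes $A_{d'}\ll q_\Siegel^{-c}B_{d'}$ term by term, where $B_{d'}$ is the same expression with $\chi_\Siegel$ deleted. The $B_{d'}$ then re-sum to at most $\|1_{[N/d]}\|_{\tilde U^k(\Z)}^{2^k}$ via the pointwise bound $\sum_{d'}1_{d'\mid n}1_{(n/d',P(Q))=1}\le 1$. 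This termwise comparison after expanding the $2^k$th power is the missing ingredient in your Step~1.
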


\begin{theorem}[Doubly logarithmic uniformity of M\"obius and von Mangoldt, II]\label{main-2}  We have
\begin{equation}\label{musi}
 \| \mu - \mu_\Siegel \|_{U^k[N]} \ll (\log\log N)^{-c}
\end{equation}
and
\begin{equation}\label{lasi}
 \| \Lambda - \Lambda_{\Siegel} \|_{U^k[N]} \ll (\log\log N)^{-c}.
\end{equation}
\end{theorem}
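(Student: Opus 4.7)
The plan is a contradiction argument combining Manners' quantitative inverse theorem for the $U^k$ norms with Proposition \ref{equiprop}. Write $f$ for either $\mu - \mu_\Siegel$ or $\Lambda - \Lambda_\Siegel$; by Lemma \ref{point-bound} one has $|f| \ll 1$ (respectively $|f| \ll \log N$). Suppose for contradiction that $\|f\|_{U^k[N]} \ge \delta$ with $\delta \ge (\log\log N)^{-c_0}$, for a small $c_0 > 0$ to be chosen. After normalizing $f$ to be $1$-bounded (absorbing a harmless $(\log N)^{O_k(1)}$ factor in the von Mangoldt case), Manners' theorem produces a filtered nilmanifold $G/\Gamma$ of step at most $k-1$ and complexity $M \le \exp\exp(\delta^{-O_k(1)})$, a polynomial sequence $g\colon \Z \to G$, and a $1$-bounded Lipschitz function $F$ on $G/\Gamma$ with
\[\bigl|\E_{n\in[N]} f(n)\,\overline{F(g(n)\Gamma)}\bigr| \gg \delta^{O_k(1)}.\]
For $c_0$ sufficiently small in terms of $k$, one has $M \le Q = \exp(\log^{1/10} N)$.

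Next I would apply the Green--Tao quantitative factorization theorem for polynomial orbits, writing $g = \epsilon g' \gamma$ with $\gamma$ rational of period $q \ll M^{O_k(1)} \le Q$, $\epsilon$ smooth, and $g'$ totally $M^{-A}$-equidistributed in a sub-nilmanifold $G'/\Gamma'$ for large $A=A(k)$. Partitioning $[N]$ into progressions $P$ of common difference $q$ on which $\gamma$ is constant and $\epsilon$ essentially so, the correlation estimate passes to
\[\bigl|\sum_{n\in P} f(n)\,\overline{\tilde F(g'(n)\Gamma')}\bigr| \gg \delta^{O_k(1)} |P|\]
on some such $P$, where $\tilde F$ is a $1$-bounded Lipschitz function on $G'/\Gamma'$. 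Splitting $\tilde F = \int \tilde F + (\tilde F - \int \tilde F)$, the constant term gives a sum $(\int \tilde F)\sum_{n \in P}f(n)$, which Proposition \ref{equiprop} bounds by $\ll N \exp(-c\log^{1/10} N)$, comfortably beating $\delta^{O_k(1)} N/q$. For the mean-zero part, total equidistribution of $g'$ together with a vertical Fourier expansion lets one descend one step of nilpotency, and iterating reduces to the abelian case $\sum_{n\in P'} f(n) e(\alpha n)$ on progressions of modulus $\le Q$; a standard major/minor arcs split then handles these, with major arcs falling back on Proposition \ref{equiprop} and minor arcs absorbed via Theorem \ref{known-unif}(i) (or directly via Proposition \ref{equiprop} on finer progressions).

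The main obstacle is the quantitative bookkeeping. Manners' inverse theorem imposes a doubly-exponential complexity loss in $1/\delta$, and each factorization or Fourier-expansion step multiplies this by a further polynomial. The choice $Q = \exp(\log^{1/10} N)$ combined with the pseudopolynomial strength of Proposition \ref{equiprop} is engineered precisely to swallow these losses: provided $c_0$ is small enough in terms of $k$ and the exponents in Manners' and Green--Tao's theorems, every complexity, modulus, and period arising in the argument stays below $Q$, and the $\exp(-c\log^{1/10} N)$ savings from Proposition \ref{equiprop} dominate the polynomial-in-$\delta$ target many times over. The reason one cannot push past $(\log\log N)^{-c}$ is exactly this tower-exponential loss from Manners: any $\delta$ smaller than $(\log\log N)^{-c}$ would drive $M$ beyond $Q$, at which point Proposition \ref{equiprop} no longer applies on progressions of modulus $q \sim M$. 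No genuinely new nilsequence technology is needed; the serious work lies in threading the quantitative inputs (Manners, Green--Tao factorization, Proposition \ref{equiprop}, and Theorem \ref{siegel-uniform}) together so that all error terms close up within the budget set by $Q$.
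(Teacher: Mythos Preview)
Your approach for $\mu - \mu_\Siegel$ is essentially the paper's: apply Manners in the contrapositive, then reduce to controlling correlations with nilsequences (Theorem~\ref{quant-ortho}), whose proof indeed goes through factorization and Proposition~\ref{equiprop} for the ``major arc'' constant piece. Your description of the mean-zero (minor arc) piece is too breezy, though: total equidistribution of $g'$ says nothing about $\sum_{n\in P} f(n)\,\tilde F(g'(n)\Gamma')$ when $f$ is not constant. One cannot simply ``descend one step of nilpotency and iterate to the abelian case''; the arithmetic structure of $f$ must be exploited via a Vaughan-type decomposition into Type~I and Type~II sums, exactly as in \cite{gt-mobius} and Section~\ref{sec:mobius}. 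Moreover, your fallback to Theorem~\ref{known-unif}(i) for minor arcs is a category error: that is a $U^1$ bound on averages, not an exponential sum estimate.

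The fatal gap is in the von Mangoldt case. The normalization by $\log N$ is \emph{not} harmless. If $\|\Lambda-\Lambda_\Siegel\|_{U^k[N]}\ge\delta$ with $\delta=(\log\log N)^{-c_0}$, then the $1$-bounded function $(\Lambda-\Lambda_\Siegel)/\log N$ has $U^k$ norm only $\ge\delta/\log N$, and Manners' theorem then outputs a nilmanifold of complexity $\exp\exp\bigl((\log N/\delta)^{O_k(1)}\bigr)$, which is astronomically larger than $Q=\exp(\log^{1/10}N)$ (indeed larger than any power of $N$). Your entire quantitative budget collapses at this point: the modulus $q$ of the progression $P$ can no longer be kept below $Q$, so Proposition~\ref{equiprop} does not apply. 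The paper confronts this obstacle head-on (see the remarks at the end of Section~\ref{sec:manners}) and resolves it via a genuine new ingredient: a $W$-trick followed by the densification method of Conlon--Fox--Zhao, packaged as a transference principle (Theorems~\ref{transf} and~\ref{transfinv}) that allows one to apply the inverse theorem to $\nu$-bounded functions with only polynomial losses, provided $\nu-1$ has small $U^{2k}$ norm. A pseudorandom majorant $\nu$ for $\frac{\phi(W)}{W}(\Lambda-\Lambda_\Siegel)(W\cdot+b)$ is then supplied by a Goldston--Y{\i}ld{\i}r{\i}m sieve (Proposition~\ref{tickle}). This machinery is not optional bookkeeping; without it the argument for \eqref{lasi} does not close.
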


Theorem~\ref{siegel-uniform} is an application of sieve-theoretic methods, smooth number estimates and the Weil bound, and is established in Section~\ref{siegel-sec}.  The main difficulty is to establish Theorem~\ref{main-2}.  In principle,  one can directly apply the quantitative inverse theory of Manners~\cite{manners}, and reduce matters to controlling the correlation of $\mu - \mu_\Siegel$, $\Lambda - \Lambda_\Siegel$ with nilsequences arising from nilmanifolds (although in the case of $\Lambda - \Lambda_\Siegel$ we have the obstacle that the function is unbounded -- the resolution of this is discussed below).  Indeed, in Section~\ref{sec:mobius} we will establish the following bounds that significantly extend the bounds in Proposition~\ref{equiprop}:

\begin{theorem}[Pseudopolynomial orthogonality of M\"obius and von Mangoldt with nilsequences]\label{quant-ortho} Let $\epsilon>0$ and $k\geq 1$. Let $c_1(\epsilon)>0$ be small enough in terms of $\epsilon$. Then we have the bounds 
\begin{equation}\label{mmus}
 \sum_{n \in P} (\mu - \mu_{\Siegel})(n) \overline{F}(g(n) \Gamma) \ll_{\epsilon} N \exp(-  \log^{1/10-\epsilon} N)
\end{equation}
and
\begin{equation}\label{mmus-2}
 \sum_{n \in P} (\Lambda - \Lambda_{\Siegel})(n) \overline{F}(g(n) \Gamma) \ll_{\epsilon} N \exp(- \log^{1/10-\epsilon} N)
\end{equation}
whenever $P \subset [N]$ is an arithmetic progression, $G/\Gamma$ is a filtered nilmanifold of degree $k-1$, dimension at most $(\log\log N)^{c_1(\epsilon)}$, and complexity at most $\exp(\log^{c_1(\epsilon)} N)$, $F: G/\Gamma \to \C$ is a $1$-bounded Lipschitz function\footnote{A function $F \colon X \to \C$ is \emph{$1$-bounded} if $|F(x)| \leq 1$ for all $x \in X$.  More generally, given any $\nu \colon X \to \R^+$, we say that $F \colon X \to \C$ is \emph{$\nu$-bounded} if $|F(x)| \leq \nu(x)$ for all $x \in X$.}  of Lipschitz constant at most $\exp(\log^{1/10-\epsilon} N)$, and $g \colon \Z \to G$ is a polynomial map.   (The relevant definitions of filtered nilmanifolds, etc., are reviewed in Definition~\ref{filnil}.)
\end{theorem}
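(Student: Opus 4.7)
The plan is to adapt the Green--Tao approach to proving orthogonality of $\mu$ and $\Lambda$ with nilsequences, in quantitative effective form, with the pseudopolynomial arithmetic progression bound of Proposition~\ref{equiprop} taking the place of the (ineffective) Siegel--Walfisz theorem. Three ingredients drive the argument: a quantitative factorization theorem for the polynomial orbit $g$; an arithmetic decomposition of $\mu - \mu_\Siegel$ and $\Lambda - \Lambda_\Siegel$ into Type I and Type II bilinear sums via Heath--Brown's and Vaughan's identities; and the use of the Siegel correction to absorb the obstruction at moduli up to $Q$.

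First I would apply a quantitative Green--Tao--Manners factorization theorem to write $g$ as a product of a smooth piece, an equidistributed piece $g'$ taking values in a rational subnilmanifold $G'/\Gamma'$, and a rational periodic piece $\gamma$ of period at most some parameter $M$. Decomposing $F$ by Lipschitz cut-offs matched to the finitely many $\gamma$-cosets, the sum reduces to a union of correlation sums on short arithmetic subprogressions $P' \subset P$ of common difference dividing $M$, on each of which $F(g(n)\Gamma)$ becomes a Lipschitz function of a totally equidistributed orbit on $G'/\Gamma'$. If $G'/\Gamma'$ is trivial (so that the nilsequence is essentially periodic) the desired bound follows immediately from Proposition~\ref{equiprop}; otherwise we proceed to a bilinear decomposition.

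Second, I would apply Heath--Brown's identity (for $\mu - \mu_\Siegel$) and Vaughan's identity (for $\Lambda - \Lambda_\Siegel$) to expand the sum over $P'$ as a bounded linear combination of Type I sums $\sum_{d \leq U} a_d \sum_{m : dm \in P'} F(g(dm)\Gamma)$ and Type II sums $\sum_{d} \sum_m a_d b_m F(g(dm)\Gamma)$ at a range of dyadic scales. For the Type I sums one uses quantitative equidistribution of the polynomial orbit $m \mapsto g(dm)$ to replace the inner sum by its main term, which is periodic in the residue class of $d$ modulo a divisor of $Q$; the remaining outer sum over $d$ against this periodic pattern is precisely the sum of $\mu - \mu_\Siegel$ (respectively $\Lambda - \Lambda_\Siegel$) against an arithmetic progression, which is pseudopolynomially small by Proposition~\ref{equiprop}. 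This is the crucial step where Siegel's theorem is bypassed. Type II sums are handled by Cauchy--Schwarz in the $d$-variable, reducing them to correlation sums $\sum_m b_m \overline{b_{m'}} F(g(dm)\Gamma) \overline{F(g(dm')\Gamma)}$, which by the quantitative Leibman--Green--Tao equidistribution criterion applied on the product nilmanifold $(G/\Gamma)^2$ give genuine cancellation.

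The main obstacle will be the quantitative bookkeeping: the factorization theorem loses powers of the nilmanifold dimension and complexity, the Weyl--Leibman criterion loses further factors depending on the complexity and Lipschitz constant of $F$, and we have only the $\exp(-c \log^{1/10} N)$ pseudopolynomial savings of Proposition~\ref{equiprop} to absorb these losses. The hypotheses on the dimension, complexity, and Lipschitz constant are calibrated so that all these losses sit comfortably below the pseudopolynomial savings and yield the net bound $\exp(-\log^{1/10-\epsilon} N)$; this forces $c_1(\epsilon)$ to be chosen small in $\epsilon$. A secondary technical point is the unboundedness of $\Lambda$, handled by noting that $\Lambda - \Lambda_\Siegel$ is essentially supported on integers coprime to $P(Q)$, on which it is $O(\log N)$ by Lemma~\ref{point-bound}; the extra logarithmic factor is easily absorbed into the pseudopolynomial error.
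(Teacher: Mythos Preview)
Your high-level strategy---factorize, then apply a bilinear decomposition, then use quantitative equidistribution---matches the paper, but there is a genuine mislocation of where Proposition~\ref{equiprop} enters, and this makes your Type~I description incorrect as stated.

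In a Type~I sum $\sum_{d \leq U} a_d \sum_{m : dm \in P'} F(g(dm)\Gamma)$ arising from Vaughan's identity for $\mu$, the coefficients $a_d$ are divisor sums such as $\sum_{bc=d:\, b,c \leq N^{1/3}} \mu(b)\mu(c)$; they are \emph{not} $(\mu-\mu_\Siegel)(d)$. Replacing the inner sum by its main term and summing in $d$ yields $\bigl(\int F\bigr)\sum_{n \in P'}(\text{Type~I piece of }\mu)(n)$, which is not a sum of $\mu-\mu_\Siegel$ over a progression, so Proposition~\ref{equiprop} cannot be applied there. The paper instead subtracts the mean $\int_{G_{P'}/(G_{P'}\cap\Gamma)} F$ \emph{before} any bilinear decomposition; that constant times $\sum_{n\in P'}(\mu-\mu_\Siegel)(n)$ (or the $\Lambda$ analogue) is exactly a progression sum, and this is the single place Proposition~\ref{equiprop} is invoked and the Siegel--Walfisz theorem is bypassed. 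Once $F$ has mean zero, both Type~I and Type~II are controlled purely by the total equidistribution of $g'$, with no further arithmetic input beyond divisor-boundedness of the coefficients; indeed the Type~II Cauchy--Schwarz argument also relies on $F$ having mean zero in order to produce cancellation on the product nilmanifold.

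A second gap is that Vaughan and Heath--Brown identities decompose $\mu$ and $\Lambda$, not the differences $\mu-\mu_\Siegel$ and $\Lambda-\Lambda_\Siegel$. One must separately decompose the Siegel models, and for these a new kind of sum appears, a \emph{twisted Type~I} sum $\sum_{d} a_d\, 1_{d\mid n}\,\chi_\Siegel(n/d)$ coming from the $\chi_\Siegel$ factor in Definition~\ref{siegel-model}. The paper supplies a Vaughan-type proposition decomposing all four of $\mu,\mu_\Siegel,\Lambda,\Lambda_\Siegel$ into Type~I, twisted Type~I, Type~II, and negligible pieces. Handling the twisted Type~I requires an extension of the Type~I equidistribution argument to absorb the extra periodicity modulo $q_\Siegel$, together with a preliminary reduction (via Theorem~\ref{siegel-uniform} and the structural decomposition of nilsequences) that disposes of the Siegel contribution altogether when $q_\Siegel$ is large relative to the equidistribution parameter. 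Your sketch does not address either of these points.
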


\begin{remark}
If one redefined the Siegel models $\mu_{\Siegel}, \Lambda_{\Siegel}$ by assigning the parameter $Q$ the larger value $\exp((\log N)^{1/2})$, one could inspect that the exponent of logarithm in \eqref{mmus} and \eqref{mmus-2} (and in particular in Proposition~\ref{equiprop}) could be increased to $1/2-\epsilon$, hence essentially matching the shape of the error term in the classical prime number theorem. For this modification, one would have to tweak the exponents in Section~\ref{sec:sieve} a little; in particular in Proposition~\ref{lineq} the exponents $3/5$ and $4/5$ would have to be replaced with $1/2$. As the precise value of the exponent has very little influence on our bounds, we leave the details of this strengthening to the interested reader. 
\end{remark}

For sake of comparison, in~\cite{gt-mobius} the strongly logarithmic bound
$$
 \E_{n \in [N]} \mu(n) \overline{F}(g(n) \Gamma) \ll_{A,M}^\ineff \log^{-A} N$$
was established for any $A>0$ assuming that the dimension and complexity of $G/\Gamma$ and the Lipschitz constant of $F$ were all bounded by $M$;
using this bound, in~\cite{gt-linear} the qualitative bound
$$
 \E_{n \in [N]} \left(\frac{\phi(W)}{W} \Lambda(Wn+b) - 1\right) \overline{F}(g(n) \Gamma) = o^\ineff(1)$$
was shown for the same type of nilsequences $F(g(n) \Gamma)$, where $W = P(w)$ for some $w = w(N)$ growing sufficiently slowly to infinity with $N$ and any $b \in [W]$ coprime to $W$.  With a little additional effort, the latter bound then also implies the qualitative bound
$$
 \sum_{n \in P} (\Lambda - \Lambda_{\Cramer,w})(n) \overline{F}(g(n) \Gamma) = o^\ineff(N)$$
for these nilsequences and arbitrary arithmetic progressions $P \subset [N]$.  The arguments relied upon (and in fact imply) the Siegel--Walfisz theorem and thus could not give error terms better than strongly logarithmic, which would be unsuitable for our applications (particularly those involving the von Mangoldt function).  It is therefore necessary to account for the correction terms $\mu_\Siegel, \Lambda_\Siegel - \Lambda_{\Cramer,Q}$ to avoid any appeal to the Siegel--Walfisz theorem and to improve the bounds to be of pseudopolynomial type, despite the fact (from Theorem~\ref{siegel-uniform}) that these correction terms are already logarithmically small in the Gowers norm sense. 

Our proof of Theorem~\ref{quant-ortho} will broadly follow the same strategy as that in~\cite{gt-mobius}, relying on Proposition~\ref{equiprop} in the ``major arc'' case and on decomposition into ``Type I'' and ``Type II'' sums, followed by Cauchy--Schwarz and an appeal to the equidistribution theory of nilmanifolds, in the ``minor arc'' case.  A key new feature, compared to previous work, is that the dimension of the nilsequences is no longer bounded, but grows at a roughly doubly logarithmic rate in $N$.  Because of this, we are forced to perform a careful accounting on the dependence on dimension in the aforementioned equidistribution theory, and in particular ensure that the bounds only depend at most doubly exponentially on the dimension.  This is in fact one of the main reasons why our bounds in Theorem~\ref{main} are limited to be doubly logarithmic in nature; see Remarks~\ref{improv},~\ref{improv-2} below.

The estimate \eqref{musi} can be directly obtained from \eqref{mmus} using the inverse theorem of Manners~\cite{manners}, which we review in Section~\ref{sec:manners}; note that this theorem basically applies a double logarithm to the quantitative bounds, which is why the pseudopolynomial type terms in Theorem~\ref{quant-ortho} are reduced to doubly logarithmic type terms in Theorem~\ref{main-2}.  For the von Mangoldt estimate \eqref{lasi}, we encounter the familiar problem that $\Lambda-\Lambda_{\Siegel}$ is not bounded (see Lemma~\ref{point-bound}), so that Manners' quantitative inverse theorem does not immediately apply.  In~\cite{gt-linear}, this difficulty was resolved at the qualitative level by first using the ``$W$-trick'' of passing to an arithmetic progression $\{ Wn+b: n \in \N \}$ for some $W = P(w)$ and some $w$ growing slowly with $N$, and then dominating (an appropriately normalized version of) the von Mangoldt function on that progression by a divisor sum $\nu$ of Goldston--Y{\i}ld{\i}r{\i}m type that obeyed some ``pseudorandomness'' conditions. This enabled one to then apply a transference principle that roughly speaking allowed one to behave ``as if'' the normalized von Mangoldt function was bounded on this progression, at least for the purposes of applying an inverse theorem for the Gowers norms.

Here the biggest source of quantitative inefficiency is the transference principle, as the first few proofs of this principle~\cite{gt-longaps}, \cite{gt-linear}, \cite{gowers-hb}, \cite{RTTV} involved the Weierstrass approximation theorem, quantitative versions of which can generate exponential type losses.  However, in~\cite{cfz-rel} (see also~\cite{cfz}), Conlon, Fox, and Zhao introduced the method of \emph{densification}, which they used to obtain a transference principle in the context of Szemer\'edi-type theorems that involved only polynomial dependencies on the bounds (and they also relaxed the pseudorandomness hypotheses on the enveloping sieve $\nu$ by dropping the so-called ``correlation condition'').  As it turns out, the densification method can be adapted to inverse theorems as well with efficient quantitative bounds, at least when the correlation in the inverse theorem enjoys polynomial bounds; we formalize this observation (which seems to be of independent interest) as Theorem~\ref{transf}.  Fortunately for us, the arguments of Manners in~\cite[\S 5]{manners} already provide such a polynomial bound.  Using our quantitative transference result for the inverse theorem, it becomes a relatively routine matter to derive \eqref{lasi} from \eqref{mmus-2}, after making various necessary quantitative refinements (for instance, the parameter $w$ will now be taken to be of the shape $\log^\eps N$ for some small $\eps>0$, rather than growing in some unspecified slow fashion with $N$).  This will all be performed in Section~\ref{sec: densification}.

\begin{remark}\label{improv} Perhaps surprisingly, the bounds in Theorem~\ref{main} are not significantly improved if one assumes the generalized Riemann hypothesis; some pseudopolynomial bounds can now be sharpened to polynomial bounds (such as Theorem~\ref{quant-ortho}), but for the logarithmic and doubly logarithmic bounds only minor improvements in the unspecified constants $c$ are available under GRH (though of course in this case any terms involving $Q$-Siegel zeroes can simply be deleted).  On the other hand, it is tempting to conjecture that the doubly logarithmic bounds in our main results can be improved to logarithmic, given that several of the key estimates already have this quality of error term or better.  This is particularly appealing in the $k=3$ case where we have quite a good inverse $U^3$ theorem~\cite{gt-U3}.  The main difficulty is that to achieve this goal, it appears that one needs an equidistribution theory for $2$-step nilmanifolds (or quadratic bracket polynomials) that involves exponents that are merely \emph{polynomial} in the dimension of the nilmanifold (or complexity of the bracket polynomial) rather than exponential.  In analogy with the well known quadratic Diophantine approximation theory of Schmidt~\cite{schmidt}, it seems reasonable to expect such a theory to be feasible\footnote{Another option is to exploit improved the dimension bounds for the inverse $U^3$ theory now available \cite{sanders}, using the equivalences from \cite{gt-sumset}.  Since the initial release of this preprint, this option has in fact been carried out by Leng \cite{leng}, who significantly improved the $(\log \log N)^{-c}$ type bounds in Theorem \ref{main-2} to $\exp(-\log^c N)$ type bounds in the $k=3$ case.}, but we will not pursue this matter here.  On the other hand, we note that by combining Theorem~\ref{quant-ortho} with the circle method one can obtain the pseudopolynomial bounds 
$$\| \mu - \mu_\Siegel \|_{U^2[N]}, \| \Lambda - \Lambda_\Siegel \|_{U^2[N]} \ll\exp(-c\log^c N),$$ 
and one could optimistically conjecture that such pseudopolynomial (or even polynomial) bounds are also true for higher Gowers norms as well (such bounds would follow from a sufficiently uniform version of the Hardy--Littlewood prime tuples conjecture).
\end{remark}

\subsection{Acknowledgments}

TT was supported by a Simons Investigator grant, the James and Carol Collins Chair, the Mathematical Analysis \& Application Research Fund Endowment, and by NSF grant DMS-1764034. JT was supported by a Titchmarsh Fellowship and funding from
the European Union's Horizon
Europe research and innovation programme under Marie Sk\l{}odowska-Curie grant agreement no.
101058904. We thank the anonymous referee for a careful reading of the paper and for numerous helpful corrections. We thank Sean Prendiville for helpful discussions, and Andrew Granville, James Leng and Wataru Kai for corrections.

\section{Notation}\label{notation-sec}

As stated in the introduction, throughout this paper we fix an integer $k \geq 1$, and assume $N$ is a positive real number that is sufficiently large depending on $k$ (and $Q$ is given in terms of $N$ by \eqref{Q-def}).  We abbreviate $\{ n \in \N: 1 \leq n \leq N \}$ as $[N]$ (even when $N$ is not an integer).

We use the asymptotic notation $X \ll Y$, $Y \gg X$, or $X = O(Y)$ to denote an estimate of the form $|X| \leq CY$ for some constant $C>0$.  If $C$ depends on additional parameters, we indicate this by subscripts, for instance $X = O_d(Y)$ denotes the estimate $|X| \leq C_d Y$ for some $C_d>0$ depending on $d$.  However, as all of our constants will depend on the fixed parameter $k$, we omit this parameter from this subscripting notation.  Unless otherwise specified, the constants will depend in an effective fashion on the parameters; on the rare occasions (mostly involving citing previous literature) in which ineffective constants are used, we will use the superscript $\ineff$ to indicate this.  We write $X \asymp Y$ as an abbreviation for $X \ll Y \ll X$, subject to the same subscripting and superscripting conventions as before.  If $X,Y$ depend on an additional parameter $N$, we write $X = o(Y)$ as $N \to \infty$ to denote the claim that $|X| \leq c(N) Y$ for some quantity $c(N)$ that goes to zero as $N \to \infty$, again subject to the same subscripting and superscripting conventions as before.  As stated in the introduction, we use $c$ to denote various small positive constants depending on $k$ that can vary from line to line.

 We often refer to the following hierarchy of decay estimates, in increasing order of strength:
\begin{itemize}
\item \emph{Qualitative (and ineffective) decay}, in which $X = o^\ineff(Y)$ as $N \to \infty$;
\item \emph{Doubly logarithmic decay}, in which $X \ll (\log\log N)^{-c} Y$;
\item \emph{Logarithmic decay}, in which $X \ll (\log N)^{-c} Y$;
\item \emph{Strongly (but ineffectively) logarithmic decay}, in which $X \ll_A^\ineff (\log N)^{-A} Y$ for any $A>0$ (this is a typical shape for bounds obtained using the Siegel--Walfisz theorem);
\item \emph{Pseudopolynomial decay}, in which $X \ll \exp(-c \log^c N) Y$; and
\item \emph{Polynomial decay}, in which $X \ll N^{-c} Y$.
\end{itemize}
As the terminology suggests, pseudopolynomial decay will be a satisfactory substitute for polynomial decay in many of our arguments. 

We use $1_E$ to denote the indicator function of a set $E$, thus $1_E(n)$ equals $1$ when $n\in E$ and $0$ otherwise.  We also use $1_S$ to denote the indicator of a statement $S$, thus $1_S$ equals $1$ when $S$ is true and $0$ otherwise.

If $A$ is a finite set, we use $\# A$ to denote its cardinality.

All sums and products over the variable $p$ are understood to be over primes, and similarly all sums and products over variables such as $n$ or $d$ are understood to be over natural numbers, unless otherwise indicated.

\section{Some lemmas on Gowers norms}\label{sec: lemmas}

We state here a few lemmas concerning the Gowers norms that will be used later on.

In addition to the triangle inequality \eqref{triangle}, we shall also often use the closely related \emph{Gowers--Cauchy--Schwarz inequality}
\begin{equation}\label{gcz}
 \left|\E_{(x,\vec h) \in G^{k+1}} \prod_{\omega\in \{0,1\}^k} f_\omega(x+\omega \cdot \vec h)\right|
\leq \prod_{\omega \in \{0,1\}^k} \|f_\omega\|_{U^k(G)}
\end{equation}
for any finite additive group $G$ and any functions $f_\omega \colon G \to \C$ for $\omega \in \{0,1\}^k$; see for instance~\cite[Lemma B.2]{gt-linear}.  For arbitrary additive groups, we also have the non-normalized variant
\begin{equation}\label{gcz-norm}
 \left|\sum_{(x,\vec h) \in G^{k+1}} \prod_{\omega\in \{0,1\}^k} f_\omega(x+\omega \cdot \vec h)\right|
\leq \prod_{\omega \in \{0,1\}^k} \|f_\omega\|_{\tilde U^k(G)}.
\end{equation}

Observe that the Gowers norms behave well with respect to tensor products: if $f_1 \colon G_1 \to \C$, $f_2 \colon G_2 \to \C$ are finitely supported functions on additive groups $G_1,G_2$, then a short computation reveals that
\begin{equation}\label{tensor}
\|f_1 \otimes f_2 \|_{\tilde U^k(G_1 \times G_2)} = \|f_1\|_{\tilde U^k(G_1)} \|f_2\|_{\tilde U^k(G_2)}
\end{equation}
for any $k \geq 1$.

We now develop a variant of this identity \eqref{tensor}.
We localize the Gowers norm to cosets $a+H$ of a subgroup $H$ of an additive group $G$ as follows: if $k \geq 1$ and $f \colon G \to \C$ is finitely supported, we define $\|f\|_{\tilde U^k(a+H)} \coloneqq \| f(a+\cdot) \|_{\tilde U^k(H)}$, and similarly $\|f\|_{U^k(a+H)} \coloneqq \| f(a+\cdot) \|_{U^k(H)}$ if $H$ is finite.  Note that this definition does not depend on the choice of coset representative.  We have the following convenient Fubini type inequality (which is reasonably well known ``folklore'', although the only explicit prior reference to such an inequality that we are aware of is \cite[Lemma 4.3]{cladek}):

\begin{lemma}[Fubini type inequality]\label{fubini}  Let $k \geq 1$, let $G$ be an additive group, let $H$ be a subgroup of $G$, and let $f \colon G \to \C$ be a finitely supported function.  For each coset $a+H$ in the quotient group $G/H$, let $F(a+H)$ denote the quantity
$$ F(a+H) \coloneqq \|f\|_{\tilde U^k(a+H)};$$
note that $F \colon G/H \to \C$ is also a finitely supported function.  Then we have
\begin{equation}\label{fub}
 \|f\|_{\tilde U^k(G)} \leq \|F\|_{\tilde U^k(G/H)}.
\end{equation}
\end{lemma}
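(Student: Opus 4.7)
The plan is to expand the definition of $\|f\|_{\tilde U^k(G)}^{2^k}$ and fibre the sum over $G^{k+1}$ by the quotient map $G \to G/H$. Writing each variable $n \in G$ and $h_i \in G$ as a fixed coset representative of its image in $G/H$ plus an element of $H$, the $2^k$-fold product structure is preserved because the shifts $n + \omega \cdot \vec h$ decompose additively as (representative of $\bar n + \omega \cdot \vec{\bar h}$) + (element of $H$). The key point is that, after parametrising, the inner sum (over the fibre $H^{k+1}$) is exactly a Gowers-Cauchy-Schwarz type sum on $H$ with $2^k$ functions $g_\omega$, each being a translate of $f$ restricted to the corresponding coset of $H$.

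Concretely, I would first pick, for each $\bar x \in G/H$, a representative $\tilde x \in G$, so that every $(n, \vec h) \in G^{k+1}$ is uniquely written as $(n, \vec h) = (\tilde{\bar n} + n'', \tilde{\vec{\bar h}} + \vec{h''})$ with $(n'', \vec{h''}) \in H^{k+1}$. Substituting into the definition of $\|f\|_{\tilde U^k(G)}^{2^k}$, I would split the sum as a double sum: an outer sum over $(\bar n, \vec{\bar h}) \in (G/H)^{k+1}$ and an inner sum over $(n'', \vec{h''}) \in H^{k+1}$ of $\prod_\omega \mathcal{C}^{|\omega|} g_\omega(n'' + \omega \cdot \vec{h''})$, where $g_\omega(x) := f(a_\omega + x)$ and $a_\omega := \tilde{\bar n} + \omega \cdot \tilde{\vec{\bar h}}$ is a representative of the coset $\bar n + \omega \cdot \vec{\bar h}$.

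Taking absolute values on the outer sum (which is legal since the total is a nonnegative real number) and applying the non-normalised Gowers-Cauchy-Schwarz inequality \eqref{gcz-norm} on the group $H$ to the inner sum, I would obtain the bound $\prod_\omega \|g_\omega\|_{\tilde U^k(H)}$. Since the Gowers norm is invariant under conjugation and translation of the function, $\|g_\omega\|_{\tilde U^k(H)} = \|f\|_{\tilde U^k(a_\omega + H)} = F(\bar n + \omega \cdot \vec{\bar h})$, independently of the choice of coset representatives. Summing the resulting bound over $(\bar n, \vec{\bar h}) \in (G/H)^{k+1}$ gives exactly $\|F\|_{\tilde U^k(G/H)}^{2^k}$ (the conjugations $\mathcal{C}^{|\omega|}$ do nothing here since $F$ is real and nonnegative), and taking $2^k$-th roots yields \eqref{fub}.

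The only real subtlety is the cosmetic bookkeeping: making sure the representatives $a_\omega$ really do represent the coset $\bar n + \omega \cdot \vec{\bar h}$ regardless of which representatives $\tilde{\bar n}, \tilde{\vec{\bar h}}$ one picked, so that the bound $F(\bar n + \omega \cdot \vec{\bar h})$ depends only on the image in $G/H$. This is immediate from translation invariance of $\|\cdot\|_{\tilde U^k}$ on cosets of $H$, and so no further ingredient beyond \eqref{gcz-norm} is needed.
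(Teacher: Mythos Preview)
Your proof is correct and follows essentially the same approach as the paper: fibre the sum defining $\|f\|_{\tilde U^k(G)}^{2^k}$ over cosets of $H$, apply the unnormalised Gowers--Cauchy--Schwarz inequality \eqref{gcz-norm} on each fibre to bound the inner sum by $\prod_\omega F(\bar n + \omega \cdot \vec{\bar h})$, and sum over $(G/H)^{k+1}$. Your version is slightly more explicit about the bookkeeping with coset representatives, but the substance is identical.
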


Informally, this lemma asserts that to bound the $U^k(G)$ norm of a function $f$, one can first evaluate the $\tilde U^k$ norm along the various cosets of $H$, and then compute the $\tilde U^k$ norm of the numbers obtained in that fashion.  If $G,H$ are finite we can obtain similar claims for the normalized $U^k$ norms in the obvious fashion. Note that the Fubini--Tonelli theorem establishes a similar claim for the $\ell^1$ (or more generally $\ell^p$) norms (and in this case one has equality in \eqref{fub} instead of inequality.  One can also verify that \eqref{fub} is consistent with \eqref{tensor}.

\begin{proof}
From Definition~\ref{gowers-norm} we have
$$ \|f\|_{\tilde U^k(G)}^{2^k} = \sum_{(n,\vec h) \in G^{k+1}} \prod_{\omega \in \{0,1\}^k} {\mathcal C}^{|\omega|} f(n+\omega \cdot \vec h).$$
Consider the contribution to the right-hand side where $n$ lies in a coset $a+H$ and $h_i$ lies in a coset $b_i+H$ for $i=1,\dots,k$.  
By the Gowers--Cauchy--Schwarz inequality \eqref{gcz-norm}, this contribution can be bounded in magnitude by
$$ \prod_{\omega \in \{0,1\}^k} F( a+\omega \cdot \vec b + H)$$
where $\vec b \coloneqq (b_1,\dots,b_k)$. Summing over all choices of $a,\vec b$ and applying Definition~\ref{gowers-norm} again, we conclude that
$$ \|f\|_{\tilde U^k(G)}^{2^k} \leq \|F\|_{\tilde U^k(G/H)}^{2^k}$$
giving \eqref{fub}.
\end{proof}

As a corollary of this inequality, we can estimate the Gowers norm of a function on $[N]$ in terms of its values on various arithmetic progressions:

\begin{corollary}[$W$-trick]\label{slice}  Let $1 \leq W \leq \frac{N}{10}$, and let $f: [N] \to \C$ be a function supported on the set $\{ n \in [N]: (n,W)=1\}$ that obeys the bounds
$$\left\| \frac{\phi(W)}{W} f(W\cdot+b) \right\|_{U^k[\frac{N-b}{W}]} \leq A$$
for all $b \in [W]$ coprime to $W$ and some $A>0$.  Then one has
$$ \|f\|_{U^k[N]} \ll A.$$
\end{corollary}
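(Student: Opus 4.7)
The plan is to apply the Fubini-type inequality from Lemma \ref{fubini} with $G = \Z$ and $H = W\Z$ to the finitely supported function $f 1_{[N]}$, reducing matters to a Gowers-norm estimate on the finite quotient $\Z/W\Z$. Identifying the coset $b + W\Z$ with $\Z$ via $b + Wm \mapsto m$ turns the restriction of $f 1_{[N]}$ to this coset into the function $m \mapsto f(Wm+b) 1_{[(N-b)/W]}(m)$ on $\Z$ (up to an irrelevant one-point boundary correction). Using the identity $\|g\|_{U^k([M])} = \|g 1_{[M]}\|_{\tilde U^k(\Z)} / \|1_{[M]}\|_{\tilde U^k(\Z)}$ together with the standard count $\|1_{[M]}\|_{\tilde U^k(\Z)} \asymp M^{(k+1)/2^k}$, the hypothesis then yields
\[
F(b + W\Z) \coloneqq \|f 1_{[N]}\|_{\tilde U^k(b + W\Z)} \ll \frac{W}{\phi(W)}\, A\, (N/W)^{(k+1)/2^k}
\]
for each $b \in [W]$ coprime to $W$; for the remaining cosets $F(b+W\Z) = 0$, since $f$ is supported on integers coprime to $W$.

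Feeding this into Lemma \ref{fubini} and estimating $\|F\|_{\tilde U^k(\Z/W\Z)}$ via the pointwise maximum of $F$ together with the support constraint gives
\[
\|F\|_{\tilde U^k(\Z/W\Z)}^{2^k} \leq \Big(\max_b F(b+W\Z)\Big)^{2^k} \cdot N^{(k)}_W,
\]
where $N^{(k)}_W$ counts the $(n, \vec h) \in (\Z/W\Z)^{k+1}$ such that $n + \omega \cdot \vec h$ is coprime to $W$ for every $\omega \in \{0,1\}^k$. By the Chinese Remainder Theorem this factorises as $N^{(k)}_W = W^{k+1} \prod_{p \mid W} (N^{(k)}_p/p^{k+1})$, with $N^{(k)}_p$ the analogous count modulo a single prime $p$. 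Combining everything and dividing through by $\|1_{[N]}\|_{\tilde U^k(\Z)} \asymp N^{(k+1)/2^k}$, all the $N$- and $W$-powers cancel except for a local Euler product, leaving
\[
\|f\|_{U^k([N])}^{2^k} \ll A^{2^k} \prod_{p \mid W} \Big(\frac{p}{p-1}\Big)^{2^k} \frac{N^{(k)}_p}{p^{k+1}}.
\]

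The main obstacle is to show that this local Euler product is bounded \emph{uniformly} in $W$: the prefactor $(W/\phi(W))^{2^k}$ alone grows like $(\log\log W)^{2^k}$, so one truly needs the support constraints encoded by $N^{(k)}_p$ to absorb that growth. A short inclusion--exclusion calculation, using that any two of the $2^k$ linear forms $n + \omega \cdot \vec h$ are linearly independent over $\F_p$, gives
\[
\frac{N^{(k)}_p}{p^{k+1}} = 1 - \frac{2^k}{p} + O(p^{-2}),
\]
which combined with $(p/(p-1))^{2^k} = 1 + 2^k/p + O(p^{-2})$ produces a local factor of shape $1 + O(p^{-2})$ at every prime. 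The product is therefore bounded by the convergent Euler product $\prod_p (1 + O(p^{-2})) = O(1)$, and taking $2^k$-th roots yields $\|f\|_{U^k([N])} \ll A$.
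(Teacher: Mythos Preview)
Your proof is correct and follows essentially the same approach as the paper's: both apply Lemma~\ref{fubini} with $G=\Z$, $H=W\Z$, convert the hypothesis into a pointwise bound $F(b+W\Z)\ll \frac{W}{\phi(W)}A(N/W)^{(k+1)/2^k}1_{(b,W)=1}$ on the coset function, and then reduce to the local Euler-product computation $\prod_{p\mid W}\bigl(\frac{p}{p-1}\bigr)^{2^k}\bigl(1-\frac{2^k}{p}+O(p^{-2})\bigr)=O(1)$ via inclusion--exclusion on the forms $n+\omega\cdot\vec h$. The only cosmetic difference is that the paper packages the last step as bounding $\|\frac{W}{\phi(W)}1_{(\cdot,W)=1}\|_{U^k(\Z/W\Z)}$ and invokes the tensor identity~\eqref{tensor} explicitly, whereas you work directly with the count $N_W^{(k)}$; these are the same calculation.
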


\begin{proof}  We extend $f$ by zero to the integers $\Z$ and work with the unnormalized Gowers norms.  Since
$$ \|1_{[N]} \|_{\tilde U^k(\Z)} \asymp N^{\frac{k+1}{2^k}}$$
and
$$ \|1_{[\frac{N-b}{W}]} \|_{\tilde U^k(\Z)} \asymp (N/W)^{\frac{k+1}{2^k}} $$
we have
$$ \| f\|_{\tilde U^k(W\Z+b)} \ll \frac{W}{\phi(W)} A (N/W)^{\frac{k+1}{2^k}} $$
for all $b \in [W]$ coprime to $W$,
and it will suffice to show that
$$ \|f\|_{\tilde U^k(\Z)} \ll A N^{\frac{k+1}{2^k}}.$$
Applying Lemma~\ref{fubini} with $G=\Z$ and $H=W\Z$, and normalizing the Gowers norms, it suffices to show that
$$ \left\| \frac{W}{\phi(W)} 1_{(\cdot,W)=1} \right\|_{U^k(\Z/W\Z)} \ll 1.$$
Expressing $W$ as the product of primes $p^{v_p(W)}$ and using the Chinese remainder theorem and \eqref{tensor} repeatedly, the left-hand side can be written as
$$ \prod_{p}  \left\| \frac{p}{p-1} 1_{(\cdot,p)=1} \right\|_{U^k(\Z/p^{v_p(W)}\Z)}.$$
However, direct computation using the inclusion-exclusion principle shows that
$$\| 1_{(\cdot,p)=1} \|_{U^k(\Z/p^{v_p(W)}\Z)}^{2^k} = 1 - \frac{2^k}{p} + O_k\left(\frac{1}{p^2}\right),$$
and hence
$$\left\| \frac{p}{p-1} 1_{(\cdot,p)=1} \right\|_{U^k(\Z/p^{v_p(W)}\Z)} = 1 + O_k\left( \frac{1}{p^2} \right).$$
The claim follows.
\end{proof}

Next, we give a variant of the triangle inequality that estimates a Gowers norm based on the greatest common divisor with a fixed modulus.

\begin{lemma}[Variant of triangle inequality]\label{variant}  Let $N \geq 100$, let $1 \leq q \leq N$,  and let $k \geq 1$ be an integer.  Let $f: [N] \to [-1,1]$ be a function.  Then
$$ \|f\|_{U^k[N]}^{2^k} \ll \sum_{d|q} \frac{1}{d} \| f(d \cdot) 1_{(\cdot,q/d)=1} \|_{U^k[N/d]}.$$
\end{lemma}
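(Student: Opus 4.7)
The plan is to decompose $f$ according to $\gcd(n,q)$, identify each piece with a rescaled copy of the corresponding $g_d$ via a change of variables, and combine the pieces through an application of Lemma~\ref{fubini}.

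First, I would write $f = \sum_{d \mid q} f_d$ with $f_d(n) := f(n)1_{(n,q)=d}$. Each $f_d$ is supported on $\{dm : (m,q/d)=1,\ dm \in [N]\}$, and satisfies $f_d(dm) = g_d(m)$, where $g_d(m) := f(dm)1_{(m,q/d)=1}$ is the function appearing on the right-hand side. Inspecting the defining sum for $\|f_d\|_{\tilde U^k(\Z)}^{2^k}$, the support condition forces $n$ and each $h_i$ to be divisible by $d$, so the substitution $n = dn'$, $h_i = dh_i'$ gives the clean identity $\|f_d\|_{\tilde U^k(\Z)} = \|g_d\|_{\tilde U^k(\Z)}$. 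Combining this with the normalizations $\|1_{[N]}\|_{\tilde U^k(\Z)} \asymp N^{(k+1)/2^k}$ and $\|1_{[N/d]}\|_{\tilde U^k(\Z)} \asymp (N/d)^{(k+1)/2^k}$ gives the per-piece relation
\[
\|f_d\|_{U^k([N])}^{2^k} \asymp d^{-(k+1)} \|g_d\|_{U^k([N/d])}^{2^k}.
\]
Since $|g_d| \le 1$, the $U^k$-norm is bounded by $1$, and so $\|g_d\|_{U^k([N/d])}^{2^k} \le \|g_d\|_{U^k([N/d])}$; consequently $\|f_d\|_{U^k([N])}^{2^k} \ll d^{-1} \|g_d\|_{U^k([N/d])}$, which matches the weight on the right-hand side of the lemma.

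To pass from these per-piece bounds to a bound on $\|f\|_{U^k([N])}^{2^k}$ itself, I would apply Lemma~\ref{fubini} with $G = \Z$ and $H = q\Z$. This expresses $\|f 1_{[N]}\|_{\tilde U^k(\Z)}$ in terms of the $\tilde U^k(\Z/q\Z)$-norm of the function $F(a) := \|f(a + q\cdot)1_{[(N-a)/q]}\|_{\tilde U^k(\Z)}$. A pointwise AM--GM on the $2^k$ factors inside the Gowers cube sum on $\Z/q\Z$ yields $\|F\|_{\tilde U^k(\Z/q\Z)}^{2^k} \leq q^k \sum_{a \in [q]} F(a)^{2^k}$. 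Using the identity $f(eb + qn) = g_e(b + (q/e)n)$ valid for $e = (a,q)$ and $b = a/e$ with $(b, q/e) = 1$, each $F(a)^{2^k}$ is an unnormalized Gowers cube sum of a slice of $g_e$ along the arithmetic progression $b + (q/e)\Z$. Reassembling these slice contributions by a second application of Lemma~\ref{fubini} (this time to $g_e$ with $H = (q/e)\Z$) allows one to rewrite the resulting sum in terms of $\|g_e\|_{U^k([N/e])}^{2^k}$; after using $\|g_e\|^{2^k} \le \|g_e\|$ once more, this completes the bound.

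The main obstacle is the bookkeeping in the reassembly: one must check that the two Fubini steps together with the AM--GM estimate produce exactly the weight $1/d$ after all normalization factors (such as $q^k (N/q)^{k+1} / N^{k+1}$ from the first step and $(q/e)^k (N/q)^{k+1} / (N/e)^{k+1}$ from the second) are correctly combined, and that the regrouping of residues $a \in [q]$ by $e = (a,q)$ accounts for each slice of each $g_e$ exactly once so that the final sum telescopes to $\sum_{d \mid q} \tfrac{1}{d} \|g_d\|_{U^k([N/d])}$ with a constant depending only on $k$.
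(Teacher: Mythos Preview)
Your per-piece identity $\|f_d\|_{\tilde U^k(\Z)} = \|g_d\|_{\tilde U^k(\Z)}$ is fine, but the assembly step has a real gap. After the first application of Lemma~\ref{fubini} together with AM--GM you correctly arrive at
\[
\|f1_{[N]}\|_{\tilde U^k(\Z)}^{2^k} \;\le\; q^k \sum_{a\in[q]} F(a)^{2^k}
= q^k \sum_{e\mid q}\;\sum_{\substack{b\in[q/e]\\(b,q/e)=1}} \bigl\|g_e(b+(q/e)\cdot)\bigr\|_{\tilde U^k(\Z)}^{2^k}.
\]
The problem is the next move: you propose to ``reassemble'' the inner sum over $b$ into $\|g_e\|_{\tilde U^k}^{2^k}$ via a second use of Lemma~\ref{fubini}. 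But Lemma~\ref{fubini} only furnishes the inequality
\[
\|g_e\|_{\tilde U^k(\Z)}^{2^k} \;\le\; \|F'\|_{\tilde U^k(\Z/(q/e)\Z)}^{2^k}
\;\le\; (q/e)^k \sum_{b} \bigl\|g_e(b+(q/e)\cdot)\bigr\|_{\tilde U^k(\Z)}^{2^k},
\]
which is a \emph{lower} bound on the slice sum, not the upper bound you need. There is no general reverse inequality: already for $k=1$ one can have $\|g_e\|_{\tilde U^1}$ small (cancellation between cosets) while each coset sum is large, so $\sum_b F'(b)^2$ need not be controlled by $\|g_e\|_{\tilde U^1}^2$. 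Thus the second Fubini step goes the wrong way, and the bookkeeping in your final paragraph cannot close as written.

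The paper avoids this obstacle by treating the $2^k$ vertices of the Gowers cube \emph{asymmetrically}. It writes $\|f\|_{\tilde U^k(\Z)}^{2^k}=\sum_n f(n)F(n)$ with $F$ the dual function, splits only the base variable $n$ according to $(n,q)=d$, and then decomposes $\vec h$ into cosets of $(d\Z)^k$. At the $2^k-1$ nonzero vertices it uses nothing more than $|f|\le 1_{[N]}$; Gowers--Cauchy--Schwarz on $d\Z$ then produces a single factor $\|f1_{(\cdot,q)=d}\|_{\tilde U^k(d\Z)}$ times $2^k-1$ factors of $\|1_{[N]}\|_{\tilde U^k(d\Z)}\asymp (N/d)^{(k+1)/2^k}$, and summing over the $d^k$ cosets yields the $1/d$ weight directly. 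The point is that only one copy of $f$ is ever sliced by $\gcd$, so there is no reassembly to perform. Your symmetric decomposition $f=\sum_{d\mid q}f_d$ forces all $2^k$ vertices to carry a gcd constraint simultaneously and creates the reassembly problem; the paper's dual-function formulation sidesteps it entirely.
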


The key point here is the presence of the factor $\frac{1}{d}$, which ensures that the summation over $d$ can be estimated manageably.

\begin{proof}  We extend $f$ by zero outside of $[N]$.  From Definition~\ref{gowers-norm}, it suffices to show the unnormalized estimate
$$ \sum_{(n,\vec h) \in \Z^{k+1}} \prod_{\omega \in \{0,1\}^k} f(n+\omega \cdot \vec h)
\ll N^{k+1} \sum_{d|q} \frac{1}{d (N/d)^{\frac{k+1}{2^k}}} \| f 1_{(\cdot,q)=d} \|_{\tilde U^k(d\Z)}.$$
The left-hand side can be written as
$$ \sum_{n\in \Z} f(n) F(n)$$
where the \emph{dual function} $F(n)$ is defined as
$$ F(n) \coloneqq \sum_{\vec h \in \Z^k} \prod_{\omega \in \{0,1\}^k \backslash \{0\}^k} f(n+\omega \cdot \vec h).$$
We split this sum in terms of the value of $(n,q)$ as
$$ \sum_{n \in \Z} f(n) F(n) = \sum_{d|q} \sum_{n \in d\Z} f(n) 1_{(n,q)=d} F(n).$$
By the triangle inequality, it thus suffices to show that
$$ \sum_{n \in d\Z} f(n) 1_{(n,q)=d} F(n)
\ll \frac{N^{k+1}}{d (N/d)^{\frac{k+1}{2^k}}} \| f 1_{(\cdot,q)=d} \|_{\tilde U^k(d\Z)}$$
for each $d|q$.  Decomposing $h_1,\dots,h_k$ in the definition of $F(n)$ into cosets mod $d$, the left-hand side may be written as
$$ \sum_{\vec b \in [d]^k}
\sum_{(n,\vec h) \in (d\Z)^{k+1}} \prod_{\omega \in \{0,1\}^{k}} f(n+\omega \cdot (\vec h+\vec b)) 1_{(n,q)=d}.$$
By the Gowers--Cauchy--Schwarz inequality \eqref{gcz}, and noting that $f$ is bounded by $1_{[N]}$, we have
$$\sum_{(n,\vec h) \in (d\Z)^{k+1}} \prod_{\omega \in \{0,1\}^k} f(n+\omega \cdot (\vec h+\vec b)) 1_{(n,q)=d}
\ll \| f 1_{(\cdot,q)=d} \|_{\tilde U^k(d\Z)} ( (N/d)^{\frac{k+1}{2^k}} )^{2^k-1}.
$$
Summing over all the $d^k$ choices of $\vec b$, we thus obtain
$$ \sum_{n \in d\Z} f(n) 1_{(n,q)=d} F(n)
\ll \| f 1_{(\cdot,q)=d} \|_{\tilde U^k(d\Z)} d^k \left( (N/d)^{\frac{k+1}{2^k}} \right)^{2^k-1}$$
and the claim follows after a little algebra.
\end{proof}

\section{Some sieve theory}\label{sec:sieve}

\subsection{The Cram\'er model} In this section we use some standard sieve-theoretic tools to establish several estimates involving the Cram\'er models $\Lambda_{\Cramer,w}$, some of which will also be useful in controlling the Siegel models $\Lambda_\Siegel, \mu_\Siegel$ in later sections.
	
We first recall a form of the fundamental lemma of sieve theory (arising from an analysis of the beta sieve).

\begin{lemma}[Fundamental lemma of sieve theory]\label{fund-lem}  Let $(a_n)_{n \in \Z}$ be a collection of non-negative reals, let $\kappa > 0$, $z \geq 2$, and $D \geq z^{9\kappa+1}$.  Let $g \colon \N \to [0,1)$ be a multiplicative function obeying the estimates
\begin{equation}\label{wpz}
 \prod_{w \leq p < z} (1 - g(p))^{-1} \leq K \left(\frac{\log z}{\log w}\right)^\kappa
\end{equation}
for all $2 \leq w \leq z$ and some $K > 0$.  Suppose that for every $d \leq D$ dividing $P(z)$ one has the formula
\begin{equation}\label{sumdn}
 \sum_{d|n} a_n = X g(d) + r_d
\end{equation}
for some $X>0$ and some remainder $r_d$.  Then one has
$$ \sum_{\substack{n\\ (n,P(z))=1}} a_n = X \left(\prod_{p < z} (1-g(p))\right) (1 + O(e^{9\kappa-s} K^{10})) + O\left( \sum_{\substack{d \leq D\\ d|P(z)}} |r_d|\right)$$
where $s \coloneqq \frac{\log D}{\log z}$.
\end{lemma}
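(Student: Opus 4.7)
The plan is to invoke the combinatorial (beta/Rosser--Iwaniec) sieve machinery, packaged so that the hypothesis on $g$ feeds directly into the sieve error. The starting observation is that the indicator $1_{(n,P(z))=1}$ equals $\sum_{d|\gcd(n,P(z))} \mu(d)$, so
\[
\sum_{(n,P(z))=1} a_n = \sum_{d \mid P(z)} \mu(d) \sum_{d|n} a_n.
\]
If we could insert \eqref{sumdn} here we would be done in one line, but we have no control on $r_d$ for $d > D$; the whole point of the fundamental lemma is to replace $\mu$ by truncated sieve weights.

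Accordingly, I would construct upper and lower bound sieve weights $\lambda^{+}, \lambda^{-}$ supported on squarefree $d \leq D$ with $d \mid P(z)$, satisfying $\lambda^{\pm}(1)=1$, $|\lambda^{\pm}(d)| \leq 1$, and
\[
\sum_{\substack{d \mid n \\ d \mid P(z)}} \lambda^{-}(d) \;\leq\; 1_{(n,P(z))=1} \;\leq\; \sum_{\substack{d \mid n \\ d \mid P(z)}} \lambda^{+}(d),
\]
together with the analytic estimate
\[
\sum_{\substack{d \leq D \\ d \mid P(z)}} \lambda^{\pm}(d) g(d) \;=\; \prod_{p<z}(1-g(p)) \cdot \bigl(1 + O(e^{9\kappa - s} K^{10})\bigr).
\]
These weights arise from iterating Buchstab's identity a bounded number of times and truncating the resulting tree at the first prime factor that pushes the product past $D$. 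Once these weights are in hand, summing \eqref{sumdn} weighted by $\lambda^{\pm}$ yields
\[
\sum_{(n,P(z))=1} a_n \;=\; X \sum_d \lambda^{\pm}(d) g(d) + \sum_d \lambda^{\pm}(d) r_d
\]
as upper/lower bounds, and combining the two estimates (with the bound $|\lambda^{\pm}(d)| \leq 1$ absorbing the remainder into $\sum_{d \leq D, d|P(z)} |r_d|$) gives exactly the statement.

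The substantive step is the analytic estimate on $\sum \lambda^{\pm}(d) g(d)$. Here I would feed the hypothesis \eqref{wpz} into the standard telescoping argument: each Buchstab expansion contributes a factor bounded by $\prod_{w \leq p < z}(1-g(p))^{-1} \leq K (\log z/\log w)^{\kappa}$, and after $s = \log D/\log z$ levels of iteration the accumulated discrepancy between the truncated sum and the full Eratosthenes--Legendre product $\prod_{p<z}(1-g(p))$ is bounded by a factor $e^{9\kappa - s} K^{10}$, coming from the convergence of the combinatorial series once $s > 9\kappa$ (which is exactly why the hypothesis $D \geq z^{9\kappa+1}$ is needed). I expect the main obstacle to be the bookkeeping that produces precisely the exponents $9\kappa$ and $K^{10}$ in the error term: the underlying mechanism (beta-sieve tail bounds under a $\kappa$-dimensional sieve hypothesis) is classical and essentially identical to the treatment in Friedlander--Iwaniec's \emph{Opera de Cribro}, so in practice I would quote that source (or the equivalent statement in \cite{friedlander-iwaniec}) rather than re-derive the combinatorial identities from scratch.
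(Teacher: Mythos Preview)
Your proposal is correct and matches the paper's approach: the paper's entire proof is the one-line citation ``See \cite[Theorem 6.9]{friedlander-iwaniec}'', which is exactly the reference you arrive at after sketching the beta-sieve mechanism. Your outline of the upper/lower sieve weights and the Buchstab iteration is an accurate summary of what lies behind that citation, but the paper does not reproduce any of it.
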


\begin{proof}  See~\cite[Theorem 6.9]{friedlander-iwaniec}.
\end{proof}

In our applications, the ratio $s = \frac{\log D}{\log z}$ will grow at a logarithmic rate, leading to pseudopolynomial accuracy when applying the fundamental lemma.

Using the fundamental lemma we can obtain satisfactory estimates (with pseudopolynomial accuracy) for counting linear equations in the Cram\'er model (compare with Theorem~\ref{quant-thm}).

\begin{proposition}[Linear equations in the Cram\'er model]\label{lineq}  Let $t,m \geq 1$ be integers, and let $N \geq 100$.  Let $\Omega$ be a convex subset of the cube $[-N,N]^d$, and let $\psi_1,\dots,\psi_t \colon \Z^m \to \Z$ be linear forms
$$ \psi_i(\vec n) = \vec n \cdot \dot \psi_i + \psi_i(0) $$
for some $\dot \psi_i \in \Z^m$ and $\psi_i(0) \in \Z$.  Assume that the linear coefficients $\dot \psi_1,\dots,\dot \psi_t \in \Z^m$ are all pairwise linearly independent and have magnitude at most $\exp(\log^{3/5} N)$ (say).  Then for any $2 \leq z \leq Q$, one has
$$ \sum_{\vec n \in \Omega \cap \Z^m} \prod_{i=1}^t \Lambda_{\Cramer,z}(\psi_i(\vec n)) = \mathrm{vol}(\Omega) \prod_{p<z} \beta_p + O_{t,m}( N^m \exp(-c \log^{4/5} N))$$
for some $c>0$ depending only on $t,m$, where for each $p$, $\beta_p$ is the local factor
$$ \beta_p \coloneqq \E_{\vec n \in (\Z/p\Z)^m} \prod_{i=1}^t \frac{p}{p-1} 1_{\psi_i(\vec n) \neq 0}$$
where $\psi_i$ is also viewed as a map from $(\Z/p\Z)^m$ to $\Z/p\Z$ in the obvious fashion.
\end{proposition}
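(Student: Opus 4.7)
The strategy is to reduce the claim to a direct application of the fundamental lemma of sieve theory (Lemma \ref{fund-lem}). Writing $\Lambda_{\Cramer,z}(n) = (P(z)/\phi(P(z))) \cdot 1_{(n,P(z))=1}$, the left-hand side of the proposition equals $(P(z)/\phi(P(z)))^t$ times the sifted count
$$ S \coloneqq \#\left\{\vec n \in \Omega \cap \Z^m : \gcd\!\left(\prod_{i=1}^t \psi_i(\vec n),\, P(z)\right) = 1\right\}.$$
I would invoke Lemma \ref{fund-lem} with $a_{\vec n} = 1_{\vec n \in \Omega \cap \Z^m}$, the sifted ``integer'' being the product $\prod_{i=1}^t \psi_i(\vec n)$, approximate count $X = \vol(\Omega)$, sifting parameter $z$, and sifting level $D \coloneqq \exp(\log^{9/10} N)$.

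The local density $g$, defined by
$$ g(d) \coloneqq \frac{1}{d^m} \#\!\left\{\vec n \in (\Z/d\Z)^m : d \mid \prod_{i=1}^t \psi_i(\vec n)\right\},$$
is multiplicative on $d \mid P(z)$ by the Chinese remainder theorem, and a direct inclusion--exclusion computation gives the key identity
$$ 1 - g(p) = \left(1 - \tfrac{1}{p}\right)^t \beta_p.$$
Pairwise linear independence of the $\dot\psi_i$'s over $\Z$, combined with a short degeneracy analysis at primes where some $\dot\psi_i$ vanishes modulo $p$, yields the uniform bound $g(p) \leq t/p$; consequently the dimension condition \eqref{wpz} holds with $\kappa = t$ and $K = O_t(1)$ via Mertens' theorem. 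The remainder satisfies
$$ |r_d| \ll_m d\, g(d)\, N^{m-1}$$
by the standard lattice-point count in the convex body $\Omega$ applied separately to each of the $d^m g(d)$ admissible residue classes modulo $d$. Since $z \leq Q = \exp(\log^{1/10} N)$, the sieve ratio is $s = \log D / \log z \geq \log^{4/5} N$, so Lemma \ref{fund-lem} delivers
$$ S = \vol(\Omega) \prod_{p<z}(1 - g(p))\, \bigl(1 + O_t(e^{-s})\bigr) + O_{m,t}\!\left( N^{m-1} \sum_{\substack{d \leq D\\ d \mid P(z)}} d\, g(d) \right).$$
Multiplying through by $(P(z)/\phi(P(z)))^t = \prod_{p<z}(1 - 1/p)^{-t}$ converts the main term into $\vol(\Omega) \prod_{p<z} \beta_p$, and this outside factor is harmless as it is $\ll (\log z)^t \ll \log^{t/10} N$. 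The relative main-term error $e^{-s}$ is $\ll \exp(-\log^{4/5} N)$; the remainder is $\ll_{m,t} N^{m-1} D \log^{O_t(1)} N \ll N^m \exp(-\log^{4/5} N)$ since $\log D = \log^{9/10} N \ll \log N$; and replacing $\#(\Omega \cap \Z^m)$ by $\vol(\Omega)$ in the input $X$ contributes an acceptable extra $O_m(N^{m-1})$.

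The main obstacle is the bookkeeping at small primes where the $\dot\psi_i$'s may become linearly dependent modulo $p$. The coordinate bound $|\dot\psi_i| \leq \exp(\log^{3/5} N)$ only rules this out for primes exceeding $\exp(O_t(\log^{3/5} N))$, which is far larger than $Q$, so every prime $p < Q$ must in principle be handled. However, the uniform union bound $g(p) \leq t/p$ is automatic whenever $\dot\psi_i \not\equiv 0 \pmod p$ for all $i$; and in the residual case where $\dot\psi_i \equiv 0 \pmod p$ for some $i$, either $p \mid \psi_i(0)$ as well (in which case $\psi_i \equiv 0 \pmod p$ identically, forcing $\beta_p = 0$ and both sides of the claimed asymptotic to vanish), or $p \nmid \psi_i(0)$ (in which case the constraint $p \nmid \psi_i(\vec n)$ is automatic and $\psi_i$ simply drops out of the sieve at the single prime $p$). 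With this degeneracy analysis carried out once and for all, the rest of the argument is a routine sieve computation.
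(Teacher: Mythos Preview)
Your approach is correct and matches the paper's: both reduce to the fundamental lemma (Lemma~\ref{fund-lem}) applied to the sequence $a_n = \#\{\vec n \in \Omega \cap \Z^m : \prod_i \psi_i(\vec n) = n\}$ with sifting level $D = \exp(\log^{9/10} N)$, and both rely on the identity $1-g(p) = (1-1/p)^t \beta_p$ to convert the sieve main term into $\mathrm{vol}(\Omega)\prod_{p<z}\beta_p$.

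Your handling of the dimension hypothesis \eqref{wpz} is actually a little cleaner than the paper's. The paper tracks the integer $A = \exp(O(\log^{3/5} N))$ of primes at which two of the $\dot\psi_i$ become linearly dependent modulo $p$, uses the crude bound $(1-g(p))^{-1}\le p^m$ at those primes, and ends up with $K = \exp(O(\log^{3/5} N))$ (which is still fine because $e^{-s}K^{10}$ remains pseudopolynomially small). Your observation that the union bound $g(p)\le t/p$ holds whenever every $\dot\psi_i$ is nonzero modulo $p$ --- regardless of pairwise dependencies --- gives $K = O_{t,m}(1)$ directly and avoids that bookkeeping. One small point you should make explicit: at primes $p\le t$ one can have $g(p)=1$ even when every $\dot\psi_i$ is nonzero modulo $p$ (the $t$ affine hyperplanes may cover $(\Z/p\Z)^m$); this case, like your degenerate case, is disposed of trivially since then $\beta_p=0$ and both sides of the asymptotic vanish. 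Once $g(p)<1$ is secured, the finitely many primes $p\le t$ contribute a factor $\prod_{p\le t}p^m$ to $K$, so the constant should be recorded as $O_{t,m}(1)$ rather than $O_t(1)$.
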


\begin{proof} Without loss of generality we may assume that $N$ is sufficiently large depending on $t,m$; we now allow all implied constants to depend on $t,m$.

For any $d$ dividing $P(z)$, let $g(d) \in [0,1]$ denote the quantity
$$ g(d) \coloneqq \E_{\vec n \in (\Z/d\Z)^m} 1_{\prod_{i=1}^t \psi_i(\vec n)=0},$$
with the convention that $g(d)=0$ if $d$ does not divide $P(z)$.  In particular we have 
\begin{equation}\label{g-form}
g(p) = 1 - \left(\frac{p-1}{p}\right)^t \beta_p
\end{equation}
for all $p<z$. From the Chinese remainder theorem we see that $g$ is multiplicative.  Suppose first that $g(p)=1$ for some $p<z$, then $\beta_p=0$ and $\prod_{i=1}^k \Lambda_{\Cramer,z}(\psi_i(n))$ is identically zero.  Thus the proposition is trivial in this case, so we may assume that $g(p)<1$ for all $p$.  From construction we then have the crude bound
\begin{equation}\label{gpm}
 g(p) \leq 1 - \frac{1}{p^m}.
\end{equation}
Also, from construction we see that for any two distinct linear forms $\psi_i, \psi_j$, there is a positive integer $A_{ij} = \exp(O(\log^{3/5} N))$ such that $\dot \psi_i, \dot \psi_j$ are linearly independent in $(\Z/p\Z)^k$ whenever $p$ does not divide $A_{ij}$ (indeed, one can take $A_{ij}$ to be one of the non-zero coefficients of the wedge product of $\dot \psi_i$ and $\dot \psi_j$).  If we let $A = \exp( O(\log^{3/5} N))$ be the product of all the $A_{ij}$, we conclude in particular that
$$ \E_{n \in (\Z/p\Z)^m} 1_{\psi_i(\vec n) = \psi_j(\vec n)=0} \leq \frac{1}{p^2}$$
whenever $p$ does not divide $A$, hence by the inclusion-exclusion formula (or Bonferroni inequalities) we have
\begin{equation}\label{bilo}
 g(p) = \frac{t}{p} + O\left( \frac{1}{p^2}\right)
\end{equation}
whenever $p$ does not divide $A$.  In particular we have
$$ (1-g(p))^{-1} = \left(\frac{p}{p-1}\right)^t \left(1 + O\left(\frac{1}{p^2}\right)\right)$$
unless $p$ divides $A$ (using \eqref{gpm} to handle the case when $p$ is bounded).  For $p$ dividing $A$, \eqref{gpm} instead gives $(1-g(p))^{-1} \leq p^m$.  We conclude that for any $2 \leq w \leq z$, we have
$$ \prod_{w \leq p < z} (1 - g(p))^{-1} \ll \left(\prod_{p|A} p\right)^m \prod_{w \leq p < z} \left(\frac{p}{p-1}\right)^t
\leq A^m \prod_{w \leq p < z} \left(\frac{p}{p-1}\right)^t $$
and hence by Mertens theorem the axiom \eqref{wpz} is obeyed with $\kappa = t$ and some $K = O( \exp( O(\log^{3/5} N) ) )$.

We introduce the sequence
$$ a_n \coloneqq \sum_{\vec n \in \Omega \cap \Z^m} 1_{\prod_{i=1}^t \psi_i(\vec n) = n}.$$
Observe that the $a_n$ are non-negative with
$$ \sum_{\vec n \in \Omega \cap \Z^m} \prod_{i=1}^t \Lambda_{\Cramer,z}(\psi_i(n)) = \left(\prod_{p < z} \frac{p}{p-1}\right)^t
\sum_{\substack{n\\ (n,P(z)) = 1}} a_n.$$
Set 
$$D \coloneqq \exp( \log^{9/10} N).$$
For any $d \leq D$ dividing $P(z)$, we have
$$ \sum_{n\equiv 0\pmod d} a_n = \sum_{\vec n \in \Omega \cap \Z^m} 1_{d|\prod_{i=1}^t \psi_i(\vec n)}.$$
The condition $d|\prod_{i=1}^t \psi_i(\vec n)$ restricts $d$ to $g(d) d^m$ cosets of $(d\Z)^m$.  Applying a volume packing argument using~\cite[Corollary A.2]{gt-linear} gives
$$ \sum_{\vec n \in \Omega \cap \Z^m} 1_{d|\prod_{i=1}^t \psi_i(\vec n)} = g(d) \mathrm{vol}(\Omega) + O( d^{O(1)} N^{m-1} ) $$
and hence axiom \eqref{sumdn} is obeyed with $X \coloneqq \mathrm{vol}(\Omega)$ and some $r_d = O( D^{O(1)} N^{m-1} )$.  Applying Lemma~\ref{fund-lem}, we conclude that
\begin{align*}
\sum_{\vec n \in \Omega \cap \Z^m} \prod_{i=1}^t \Lambda_{\Cramer,z}(\psi_i(\vec n)) &= \left(\prod_{p < z} \frac{p}{p-1}\right)^t \mathrm{vol}(\Omega) \prod_{p < z} (1 - g(p))\\
&\quad \times \left( 1 + O\left( e^{-s} \exp( O(\log^{3/5} N) ) \right) \right) \\
&\quad + O\left( \left(\prod_{p < z} \frac{p}{p-1}\right)^t D^{O(1)} N^{m-1} \right)
\end{align*}
with $s = \frac{\log D}{\log Q} \gg \log^{4/5} N$.  We can then simplify the right-hand side using \eqref{g-form} and Mertens' theorem to
$$ \sum_{\vec n \in \Omega \cap \Z^m} \prod_{i=1}^t \Lambda_{\Cramer,z}(\psi_i(\vec n)) = \mathrm{vol}(\Omega) \left(\prod_{p < z} \beta_p\right)
\left( 1 + O\left( \exp( - c \log^{4/5} N ) \right) \right) + O\left( N^{m-1/2} \right)$$
(say) for some constant $c>0$ depending on $t,m$.  From \eqref{wpz}, \eqref{g-form} and Mertens' theorem we have the crude bound
$$\prod_{p < z} \beta_p \ll \exp( O( \log^{3/5} N))$$
and the claim follows.
\end{proof}

As a first application of this estimate, we have good estimates (basically of logarithmic type) for the Cram\'er model in the Gowers norm.

\begin{corollary}[Gowers uniformity of the Cram\'er model on arithmetic progressions]\label{cram}  Let $2 \leq w \leq z \leq Q$ be such that $w \leq \log^{1/100} N$.  Set $W \coloneqq P(w)$.  Then for any $1 \leq b \leq W$ coprime to $W$, one has
$$ \left\| \frac{\phi(W)}{W} \Lambda_{\Cramer,z}(W\cdot+b) - 1 \right\|_{U^k[\frac{N-b}{W}]} \ll w^{-c}.$$
\end{corollary}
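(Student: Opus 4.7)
The plan is to expand $\|F\|_{U^k([N'])}^{2^k}$ directly as a signed sum of $2^{2^k}$ linear-equation counts and apply Proposition \ref{lineq} to each, where $N'\coloneqq (N-b)/W$ and $F(n) \coloneqq \frac{\phi(W)}{W}\Lambda_{\Cramer,z}(Wn+b)-1$.  Unfolding the Gowers norm and multiplying out $\prod_\omega F(n+\omega\cdot\vec h)$ into $2^{2^k}$ pieces indexed by $S\subseteq\{0,1\}^k$, the problem reduces to estimating
\[\sum_{S}(-1)^{2^k-|S|}\Bigl(\tfrac{\phi(W)}{W}\Bigr)^{|S|}\sum_{(n,\vec h)\in\Omega\cap\Z^{k+1}}\prod_{\omega\in S}\Lambda_{\Cramer,z}\bigl(W(n+\omega\cdot\vec h)+b\bigr),\]
divided by the normaliser $\|1_{[N']}\|_{\tilde U^k(\Z)}^{2^k}\asymp\vol(\Omega)\asymp (N/W)^{k+1}$.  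For each $S$, the forms $\psi_\omega(n,\vec h) = W(n+\omega\cdot\vec h)+b$ have coefficient vectors $W(1,\omega)$, which are pairwise linearly independent for distinct $\omega$ and of size $\ll W\leq \log^{1/100} N$, so Proposition \ref{lineq} applies and contributes an error $O((N/W)^{k+1}\exp(-c\log^{4/5}N))$ per $S$; after summation and normalisation the total error is $O(\exp(-c\log^{4/5}N))$, negligible against $w^{-c}$.

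The resulting main-term sum is $\Sigma := \sum_S(-1)^{2^k-|S|}(\phi(W)/W)^{|S|}\prod_{p<z}\beta_p(S)$.  For primes $p<w$, $p\mid W$ but $(b,W)=1$, so $\psi_\omega\equiv b\not\equiv 0\pmod p$ and $\beta_p(S) = (p/(p-1))^{|S|}$, cancelling the external prefactor $(\phi(W)/W)^{|S|}$ exactly.  For $w\leq p<z$, a translation by $-bW^{-1}$ in $\Z/p\Z$ recasts $\beta_p(S)$ as
\[\beta_p^*(S) := \Bigl(\tfrac{p}{p-1}\Bigr)^{|S|}\E_{(n,\vec h)\in(\Z/p\Z)^{k+1}}\prod_{\omega\in S}1_{n+\omega\cdot\vec h\,\not\equiv\,0\,\pmod p},\]
so $\Sigma = \sum_S(-1)^{2^k-|S|}\prod_{w\leq p<z}\beta_p^*(S)$.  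A Taylor expansion in $1/p$ is the heart of the argument: inclusion--exclusion (using the pairwise linear independence of the forms $h_\omega = n+\omega\cdot\vec h$ for distinct $\omega\in S$) gives $\E\prod_\omega 1_{h_\omega\not\equiv 0\,\pmod p} = 1-|S|/p+\binom{|S|}{2}/p^2+O_k(1/p^3)$, while $(p/(p-1))^{|S|} = 1+|S|/p+\binom{|S|+1}{2}/p^2+O_k(1/p^3)$.  The $1/p$ terms cancel, and the identity $\binom{|S|+1}{2}+\binom{|S|}{2} = |S|^2$ also cancels the $1/p^2$ contributions, yielding $\beta_p^*(S) = 1+O_k(1/p^2)$ uniformly in $S$.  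Hence $\prod_{w\leq p<z}\beta_p^*(S) = 1+O_k(1/w)$ for every $S$, and since $\sum_S(-1)^{2^k-|S|} = (1-1)^{2^k} = 0$ the constant contributions cancel across the alternating sum, leaving $\Sigma = O_k(1/w)$ and thus $\|F\|_{U^k([N'])}\ll w^{-1/2^k}$.

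I expect the principal obstacle to be ensuring the Taylor expansion $\beta_p^*(S) = 1+O_k(1/p^2)$ holds with constants uniform in $S$ (depending only on $k$) and valid for all $p\geq 2$; at very small primes $p\leq C_k$ the inclusion--exclusion has degenerate regimes that must be handled via the cruder bound $|\beta_p^*(S)|\leq 2^{|S|}\leq 2^{2^k}$, absorbed into the $O_k$ constant.  The edge cases $|S|\leq 1$ cause no trouble ($\beta_p^*(\emptyset) = 1$ and $\beta_p^*(\{\omega\}) = 1$ exactly), so the general analysis persists throughout.
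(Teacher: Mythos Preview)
Your proof is correct and follows essentially the same route as the paper's: expand the Gowers norm, apply Proposition~\ref{lineq} to each $S$-piece, observe that the $p<w$ factors cancel the $(\phi(W)/W)^{|S|}$ prefactor exactly, and show $\prod_{w\le p<z}\beta_p^*(S)=1+O_k(1/w)$ so that the alternating sum over $S$ kills the constant terms. The one unnecessary embellishment is your second-order Taylor analysis: you only need $\beta_p^*(S)=1+O_k(1/p^2)$, which already follows from the first-order cancellation $(1+|S|/p+O(1/p^2))(1-|S|/p+O(1/p^2))=1+O(1/p^2)$, exactly as the paper does via the Bonferroni-type estimate~\eqref{bilo}; the $1/p^2$-coefficient computation is extra work that buys nothing here (and would require checking three-form independence mod $p$ to justify the $O_k(1/p^3)$ remainder).
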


\begin{proof}  Write $N' \coloneqq \frac{N-b}{W}$.  We can rewrite the desired estimate (after adjusting $c$ appropriately) as
$$ \sum_{(n,\vec h) \in \Omega\cap \mathbb{Z}^{k+1}} \prod_{\omega \in \{0,1\}^k} \left(\frac{\phi(W)}{W} \Lambda_{\Cramer,z}(W(n+\omega \cdot \vec h)+b)-1\right)
\ll (N')^{k+1} w^{-c}$$
where $\Omega$ is the convex body of tuples $(n,\vec h) \in \R^{k+1}$ such that
$$ 0 < n + \omega \cdot \vec h \leq N'$$
for all $\omega \in \{0,1\}^k$.  By inclusion-exclusion, it suffices to establish the bounds
$$ \sum_{(n,\vec h) \in \Omega\cap \mathbb{Z}^{k+1}} \prod_{\omega \in S} \frac{\phi(W)}{W} \Lambda_{\Cramer,z}(W(n+\omega \cdot \vec h)+b)
= \mathrm{vol}( \Omega)  + O( (N')^{k+1} w^{-c} )
$$
for all subsets $S \subset \{0,1\}^k$.  Applying Proposition~\ref{lineq} (and Mertens' theorem), the left-hand side is equal to
$$ \left(\frac{\phi(W)}{W}\right)^{\# S} \mathrm{vol}(\Omega) \prod_{p < z } \beta_p + O( (N')^{k+1} w^{-c} )$$
(in fact there is plenty of room to spare in the error term), where
$$ \beta_p \coloneqq \E_{(n,\vec h) \in (\Z/p\Z)^{k+1}} \prod_{\omega \in S} \frac{p}{p-1} 1_{W(n+\omega \cdot \vec h)+b \neq 0}.$$
If $p < w$, then $W$ vanishes modulo $p$ and $b$ is coprime to $p$, and hence $\beta_p = (\frac{p}{p-1})^{\# S}$.  Thus we have
$$\left(\frac{\phi(W)}{W}\right)^{\# S} \prod_{p < z } \beta_p = \prod_{w \leq p < z} \beta_p.$$
By the inclusion-exclusion argument used to establish \eqref{bilo} one has
$$ \beta_p = \left(\frac{p}{p-1}\right)^{\# S} \left(1 - \frac{\# S}{p} + O\left(\frac{1}{p^2}\right)\right) = 1 + O\left(\frac{1}{p^2}\right)$$
for any $w \leq p < z$, hence
$$ \prod_{w \leq p < z} \beta_p = 1 + O( w^{-1} ).$$
Since $\mathrm{vol}(\Omega) \ll (N')^{k+1}$, the claim follows.
\end{proof}

\begin{proof}[Proof of Proposition~\ref{cramer-stable}]
Combining Corollary~\ref{cram} with Corollary~\ref{slice}, we see that
$$ \| \Lambda_{\Cramer,z} - \Lambda_{\Cramer,w} \|_{U^k[N]} \ll w^{-c}$$
whenever  $2 \leq w \leq z \leq \exp(\log^{1/10} N)$ are such that $w \leq \log^{1/100} N$.  Proposition~\ref{cramer-stable} now follows from the triangle inequality \eqref{triangle} (note the case $N=O(1)$ is trivial, so we may assume $N$ is large enough that $\log^{1/100} N > 2$).
\end{proof}

\begin{remark}\label{Effort} With more effort it may be possible to delete the $\log^{-c} N$ term in \eqref{qa}, but we will not need to do so here as there are several other error terms in our analysis that are of the same order of magnitude as $\log^{-c} N$, or worse.
\end{remark}

\subsection{Controlling the Siegel correction}\label{siegel-sec}

Now suppose that there is a $Q$-Siegel zero $\beta$, with associated quadratic character $\chi_\Siegel$ and conductor $q_\Siegel$.  In this subsection we combine the previous sieve-theoretic estimates with Weil sum estimates to obtain good control on the Siegel models $\Lambda_\Siegel, \mu_\Siegel$.  

We begin with some basic estimates on the $Q$-Siegel zero $\beta$ and the $Q$-Siegel conductor $q_\Siegel$. As $\chi_{\Siegel}$ is a primitive real character, $q_\Siegel$ is must either be square-free or four times a square-free number or eight times a square-free number.
From construction one has the upper bound
$$ q_\Siegel \leq Q = \exp(\log^{1/10} N).$$
From~\cite[Chapter 14, (12)]{davenport-book} one has the estimate
$$ 1-\beta \gg q_\Siegel^{-1/2} \log^{-2} q_\Siegel$$
which when combined with the upper bound $1 - \beta \ll \frac{1}{\log Q} \ll \log^{-1/10} N$ gives the lower bound
\begin{equation}\label{qsie}
 q_\Siegel \gg \frac{\log^{1/5} N}{(\log\log N)^2}.
\end{equation}
One could improve this lower bound using Siegel's theorem to strongly logarithmic, but we will not do so here in order to keep the estimates effective.  In particular, any bound of the shape $O(q_\Siegel^{-c})$ will lead to logarithmic decay.

From~\cite[Theorem 2.9]{mv} we observe the doubly logarithmic bound
\begin{equation}\label{sumdq}
\prod_{p|q_\Siegel} \left(1-\frac{1}{p}\right)^{-1} = \frac{q_\Siegel}{\phi(q_\Siegel)} \ll \log\log q_\Siegel \ll \log\log N.
\end{equation}

Next, we show that the quantity $\alpha$ in \eqref{alpha-def} is bounded, which was the missing step needed to establish Lemma~\ref{point-bound}:

\begin{lemma}\label{alpha-bound} We have $\alpha \ll 1$.  In particular, Lemma~\ref{point-bound} holds.
\end{lemma}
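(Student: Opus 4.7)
The plan is to invert $\alpha$ and establish a lower bound on the resulting product. From the definition \eqref{alpha-def},
\[
\alpha^{-1} = L'(\beta,\chi_\Siegel)\cdot\prod_{p<Q}\left(1-\frac{1}{p}\right)\cdot\prod_{p<Q}\left(1-\frac{\chi_\Siegel(p)}{p^\beta}\right),
\]
and every factor on the right is positive (in particular $L'(\beta,\chi_\Siegel)>0$, since $\beta$ is a simple real zero of $L(s,\chi_\Siegel)$ while $L(1,\chi_\Siegel)>0$ and by Landau--Page no other zero intervenes on $(\beta,1]$). It therefore suffices to show $\alpha^{-1}\gg 1$.

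I would estimate the three factors in turn. Mertens' theorem gives $\prod_{p<Q}(1-1/p)\asymp 1/\log Q$. For the second Euler product, I would first shift the exponent from $\beta$ to $1$: since $(1-\beta)\log Q\le c_0$ by the definition of a $Q$-Siegel zero, the elementary estimate $|p^{1-\beta}-1|\ll (1-\beta)\log p$ for $p<Q$ gives
\[
\log\prod_{p<Q}\frac{1-\chi_\Siegel(p)/p^\beta}{1-\chi_\Siegel(p)/p}\ll\sum_{p<Q}\frac{(1-\beta)\log p}{p}\ll (1-\beta)\log^2 Q\ll 1,
\]
so that $\prod_{p<Q}(1-\chi_\Siegel(p)/p^\beta)\asymp\prod_{p<Q}(1-\chi_\Siegel(p)/p)$. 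A Mertens-type estimate for non-principal Dirichlet characters (valid in our regime since $q_\Siegel<Q$) then shows this is $\asymp 1/L(1,\chi_\Siegel)$.

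Finally, I would Taylor-expand $L(s,\chi_\Siegel)$ about the simple zero $s=\beta$, using the standard Cauchy-type bound $|L''(\sigma,\chi_\Siegel)|\ll\log^3 q_\Siegel$ for $\sigma\in[\beta,1]$, to write
\[
L(1,\chi_\Siegel) = L'(\beta,\chi_\Siegel)(1-\beta) + O\left((1-\beta)^2\log^3 q_\Siegel\right).
\]
Combined with an effective lower bound $L(1,\chi_\Siegel)\gg q_\Siegel^{-1/2}\log^{-2}q_\Siegel$ and $1-\beta\ll 1/\log Q$, the error term is dominated by the main term, yielding $L'(\beta,\chi_\Siegel)\gg L(1,\chi_\Siegel)/(1-\beta)\gg L(1,\chi_\Siegel)\log Q$. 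Multiplying the three estimates gives
\[
\alpha^{-1}\gg L(1,\chi_\Siegel)\log Q\cdot (\log Q)^{-1}\cdot L(1,\chi_\Siegel)^{-1}=1,
\]
as required; note that the $L(1,\chi_\Siegel)$ factor cancels, so the bound is independent of the size of $q_\Siegel$.

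The main delicacy is the Mertens-type asymptotic $\prod_{p<Q}(1-\chi_\Siegel(p)/p)^{-1}\asymp L(1,\chi_\Siegel)$ in the regime where $q_\Siegel$ may be comparable to $Q$, where classical statements requiring $Q$ to be a substantial power of $q_\Siegel$ do not directly apply. If needed, this step can be bypassed by differentiating the identity $\sum_{(n,P(Q))=1}\chi_\Siegel(n)/n^s=L(s,\chi_\Siegel)\prod_{p<Q}(1-\chi_\Siegel(p)/p^s)$ at $s=\beta$ (both sides vanish there, as the left-hand series converges conditionally for $\mathrm{Re}(s)>0$), which yields the clean relation $L'(\beta,\chi_\Siegel)\prod_{p<Q}(1-\chi_\Siegel(p)/p^\beta)=-\sum_{(n,P(Q))=1}\chi_\Siegel(n)\log n/n^\beta$ and reduces the matter to a lower bound on the series on the right.
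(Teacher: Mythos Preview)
Your strategy of inverting $\alpha$ and estimating each factor is natural, but it has a genuine gap at Step 2, and the Taylor argument in Step 3 is also not quite closed.

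The Mertens-type claim $\prod_{p<Q}(1-\chi_\Siegel(p)/p)\asymp 1/L(1,\chi_\Siegel)$ is \emph{not} valid when $q_\Siegel$ can be nearly as large as $Q$; you flag this, but it is fatal rather than merely delicate. In the Siegel-zero regime $\chi_\Siegel(p)=-1$ on average for primes well beyond $Q$, so the tail $\prod_{p\ge Q}(1-\chi_\Siegel(p)/p)^{-1}$ can be as small as $\asymp(1-\beta)\log Q$, which tends to $0$ in the interesting regime. Your proposed workaround assumes the Dirichlet series $\sum_{(n,P(Q))=1}\chi_\Siegel(n)/n^s$ converges conditionally for $\mathrm{Re}(s)>0$, but there is no reason for the $Q$-rough partial sums to exhibit that much cancellation, so this route is also unjustified. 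In Step 3, the Taylor error $(1-\beta)^2\log^3 q_\Siegel$ is \emph{not} dominated by the lower bound $L(1,\chi_\Siegel)\gg q_\Siegel^{-1/2}\log^{-2}q_\Siegel$ when $q_\Siegel$ is large: one would need $q_\Siegel^{1/2}\log^5 q_\Siegel\ll\log^2 Q$, which fails. (This step alone \emph{can} be repaired via $(L'/L)(s,\chi_\Siegel)=1/(s-\beta)+O(\log q_\Siegel)$, giving $L'(\beta,\chi_\Siegel)\asymp L(1,\chi_\Siegel)/(1-\beta)$; but Step 2 cannot.)

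The paper avoids splitting the factors at all. Set $F(s)=L(s,\chi_\Siegel)^{-1}\prod_{p<Q}(1-\chi_\Siegel(p)/p^s)^{-1}$, which for $\mathrm{Re}(s)>1$ is the tail Euler product $\prod_{p\ge Q}(1-\chi_\Siegel(p)/p^s)$. Its residue at the simple pole $\beta$ equals $\alpha\prod_{p<Q}(1-1/p)\asymp\alpha/\log Q$, so by the residue theorem it suffices to show $|F(s)|\ll 1$ on the circle $|s-\beta|=2c_0/\log Q$. At the rightmost point $s_0>1$ of that circle the Euler product and Mertens give $|F(s_0)|\ll 1$ directly; one then propagates this around the circle using $F'/F=-L'/L+O(\log Q)=O(\log Q)$ (from \cite[Theorem 11.4]{mv}). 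The key advantage is that the combined function $F$ stays bounded on this circle even though the individual factors $L'(\beta,\chi_\Siegel)$ and $\prod_{p<Q}(1-\chi_\Siegel(p)/p^\beta)$ are not separately controllable in terms of $L(1,\chi_\Siegel)$ in the way your argument requires.
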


\begin{proof}  Consider the meromorphic function
$$ F(s) = \frac{1}{L(s,\chi_\Siegel)} \prod_{p < Q} \left(1 - \frac{\chi_\Siegel(p)}{p^s}\right)^{-1}.$$
This function has a simple pole at $\beta$ with residue
$$ \mathrm{Res}(F,\beta) = \frac{1}{L'(\beta,\chi_\Siegel)} \prod_{p < Q} \left(1 - \frac{\chi_\Siegel(p)}{p^\beta}\right)^{-1} = \alpha \prod_{p < Q} \left(1 - \frac{1}{p}\right)$$
and no other poles in the disk $\{ s: |s-\beta| \leq \frac{2c_0}{\log Q} \}$ if $c_0$ is small enough, by~\cite[Theorem 11.3]{mv}.  By Mertens' theorem, it thus suffices to establish the bound
$$ \mathrm{Res}(F,\beta)  \ll \frac{1}{\log Q}.$$
By the residue theorem, it suffices to show that
\begin{equation}\label{fs1}
 F(s) \ll 1
\end{equation}
on the circle $|s-\beta| = \frac{2c_0}{\log Q}$.  On the rightmost point $s_0 = \beta+\frac{2c_0}{\log Q} \geq 1 + \frac{c_0}{\log Q}$ of this circle, we can use the Euler product representation
$$ F(s_0) = \prod_{p \geq Q} \left(1 - \frac{\chi_\Siegel(p)}{p^{s_0}}\right)$$
followed by the triangle inequality to estimate
\begin{equation}\label{lem}
 |F(s_0)| \leq \prod_{p \geq Q} \left(1 + \frac{1}{p^{1+\frac{c_0}{\log Q}}}\right) \ll 1
\end{equation}
thanks to Mertens' theorem.  For more general points $s$ on this circle, we have from~\cite[Theorem 11.4]{mv} that
$$ \frac{L'}{L}(s,\chi_\Siegel) \ll \log Q.$$
Since 
\begin{align*}
 \frac{F'}{F}(s) &= -\frac{L'}{L}(s,\chi_\Siegel)  - \sum_{p<Q} \frac{\chi_\Siegel(p) \log p}{p^s - \chi_\Siegel(p)} \\
&= -\frac{L'}{L}(s,\chi_\Siegel) + O\left( \sum_{p<Q} \frac{\log p}{p^{1-\frac{3c_0}{\log Q}}} \right)
\end{align*}
(noting that $\mathrm{Re} s \geq 1 - \frac{3c_0}{\log Q}$), we conclude from Mertens' theorem that
$$ \frac{F'}{F}(s) \ll \log Q$$
on the entire circle; integrating this and using \eqref{lem}, we obtain \eqref{fs1} as required.
\end{proof}

From~\cite[Theorem 11.4]{mv} we have
$$ \frac{L'}{L}(s,\chi_\Siegel) = \frac{1}{s-\beta} + O(\log q_\Siegel)$$
and
$$ L(s,\chi_\Siegel) \gg |s-\beta|$$
for $s \neq \beta$ sufficiently close to $\beta$; multiplying the two estimates and taking limits as $s \to \beta$, we also obtain the bound
\begin{equation}\label{crude-lp}
 \frac{1}{L'(\beta,\chi_\Siegel)} \ll 1.
\end{equation}

We can view $\chi_\Siegel$ as a function on $\Z/q_\Siegel\Z$.  Crucially, it exhibits some cancellation in the Gowers norms (of polynomial type in $q_\Siegel$, and hence of logarithmic type in $N$):

\begin{lemma}[Gowers norm cancellation]\label{gnc}  For any $\eps>0$, we have
$$ \|\chi_\Siegel\|_{U^k(\Z/q_\Siegel\Z)} \ll_{\eps} q_\Siegel^{- \frac{1}{2^{k+1}}+\eps}.$$
\end{lemma}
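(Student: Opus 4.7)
The plan is to factor the problem via the Chinese Remainder Theorem and the tensor product identity \eqref{tensor}, reducing to a single prime $p \mid q_\Siegel$, and then apply the Weil bound for multiplicative character sums of polynomials. Since $\chi_\Siegel$ is primitive real, $q_\Siegel$ is a fundamental discriminant, hence of the form $q_\Siegel = 2^a p_1 \cdots p_s$ with $a \in \{0,2,3\}$ and $p_1,\ldots,p_s$ distinct odd primes. Under the CRT isomorphism, $\chi_\Siegel$ decomposes as a tensor product of its local components: for odd $p_i \mid q_\Siegel$, $\chi_{p_i}$ is the Legendre symbol modulo $p_i$, and the $2^a$-part is a character modulo $2^a$. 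Applying \eqref{tensor} (which descends to normalized Gowers norms, since $\|1\|_{\tilde U^k}$ also tensorizes) gives
$$\|\chi_\Siegel\|_{U^k(\Z/q_\Siegel\Z)} = \|\chi_{2^a}\|_{U^k(\Z/2^a\Z)} \prod_{i=1}^s \|\chi_{p_i}\|_{U^k(\Z/p_i\Z)},$$
and the $2^a$-factor is trivially $O(1)$ since $2^a\leq 8$. It therefore suffices to bound each factor $\|\chi_p\|_{U^k(\Z/p\Z)}$ for odd $p \mid q_\Siegel$.

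For such $p$, since $\chi_p$ is real the conjugates in Definition~\ref{gowers-norm} drop out, giving
$$\|\chi_p\|_{U^k(\Z/p\Z)}^{2^k} = \frac{1}{p^{k+1}}\sum_{\vec h \in \F_p^k}\sum_{n \in \F_p} \chi_p\bigl(P_{\vec h}(n)\bigr), \qquad P_{\vec h}(n) \coloneqq \prod_{\omega \in \{0,1\}^k}(n + \omega \cdot \vec h).$$
The Weil bound for nontrivial multiplicative characters applied to a polynomial over $\F_p$ yields $|\sum_n \chi_p(P_{\vec h}(n))| \leq (2^k-1)\sqrt{p}$ whenever $P_{\vec h}$ is not of the form $cQ(X)^2$ in $\F_p[X]$; otherwise the trivial bound gives $p + O(2^k)$.

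The main obstacle is the combinatorial characterization of those ``bad'' $\vec h \in \F_p^k$ for which $P_{\vec h}$ is a perfect square. I would prove by induction on $k$ that this holds if and only if some coordinate $h_i$ vanishes modulo $p$. The nontrivial direction uses the involution $\omega \mapsto \omega \oplus e_1$ on $\{0,1\}^k$: assuming $h_1 \not\equiv 0 \pmod p$, the multiplicities $m_v(\vec h) = \#\{\omega : \omega \cdot \vec h = v\}$ decompose as $m_v = n_v + n_{v-h_1}$, where $n_v = \#\{\omega' \in \{0,1\}^{k-1} : \omega' \cdot \vec h' = v\}$ and $\vec h' = (h_2,\ldots,h_k)$. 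Requiring $m_v \equiv 0 \pmod 2$ for all $v$ forces $n_v \equiv n_{v-h_1} \pmod 2$; iterating (using that $h_1$ generates $\F_p$ additively) shows $n_v \pmod 2$ is constant in $v$, and since $\sum_v n_v = 2^{k-1}$ is even for $k \geq 2$ and $p$ is odd, this constant is zero, so all $n_v$ are even. Thus $P_{\vec h'}$ is a square in dimension $k-1$, and induction concludes. The base case $k=1$ is $P_{(h)}(n) = n(n+h)$, a square iff $h=0$. Consequently, the bad set has size at most $p^k - (p-1)^k \leq kp^{k-1}$.

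Combining these ingredients, the bad $\vec h$ contribute $O_k(kp^{k-1}\cdot p) = O_k(p^k)$ to the double sum and the good ones $O_k(p^k \cdot 2^k\sqrt{p}) = O_k(p^{k+1/2})$, giving $\|\chi_p\|_{U^k(\Z/p\Z)}^{2^k} \ll_k p^{-1/2}$ and hence $\|\chi_p\|_{U^k(\Z/p\Z)} \leq C_k\, p^{-1/2^{k+1}}$ for some $C_k$. The product over $s = \omega(q_\Siegel) \ll \log q_\Siegel / \log\log q_\Siegel$ odd prime divisors incurs an overall constant $C_k^{\omega(q_\Siegel)}$, which is absorbed by the arbitrarily small loss $q_\Siegel^\eps$, yielding the claim.
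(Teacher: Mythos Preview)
Your proof is correct and follows the same overall strategy as the paper: reduce via CRT and the tensor identity \eqref{tensor} to a single odd prime $p$, then apply the Weil bound to the inner $n$-sum and control the exceptional set of $\vec h$.

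The one place you work harder than necessary is the inductive characterization of the bad set. The paper avoids this entirely: rather than asking when $P_{\vec h}$ is a perfect square, it simply notes that the Weil bound already applies whenever all the shifts $\omega\cdot\vec h$ are \emph{distinct} (so $P_{\vec h}$ is squarefree), and that the set of $\vec h$ with at least one collision $\omega\cdot\vec h=\omega'\cdot\vec h$ has size $O_k(p^{k-1})$ by a union bound over the $O_k(1)$ linear constraints $(\omega-\omega')\cdot\vec h=0$. This gives the same $O_k(p^{k-1})$ bound on the bad set with no induction. Your finer characterization (bad iff some $h_i=0$) is correct and interesting, but not needed for the estimate. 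Similarly, the paper absorbs the product of $O_k(1)$ constants over prime divisors via the divisor bound rather than via the $\omega(q_\Siegel)$ estimate; either works.
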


\begin{proof} By the Chinese remainder theorem, we can express $\Z/q_\Siegel\Z$ as the product of prime cyclic groups $\Z/p\Z$ of odd order, as well as $\Z/2^j\Z$ for some $0\leq j \leq 3$.  The quadratic character $\chi_\Siegel$ can then be expressed as the tensor product of quadratic characters on these groups.  Using \eqref{tensor} and the divisor bound, it thus suffices to show that
$$ \| \chi \|_{U^k(\Z/p\Z)} \ll p^{- \frac{1}{2^{k+1}}}$$
for all odd primes $p$, with $\chi$ the quadratic character on $\Z/p\Z$.  By Definition~\ref{gowers-norm}, this is equivalent to
$$ \E_{(n,\vec h) \in (\Z/p\Z)^{k+1}} \prod_{\omega \in \{0,1\}^k} \chi(n+\omega \cdot \vec h)
\ll p^{-1/2}.$$
The contribution of any given tuple $\vec h \in (\Z/p\Z)^k$ to the left-hand side is trivially bounded by $O(p^{-k})$.  When the dot products $\omega \cdot \vec h$ are all distinct, the Weil bounds (see e.g.,~\cite[Corollary 11.24]{iwaniec-kowalski}) give instead the bound $O(p^{-k-1/2})$.  Since there are $p^k$ tuples $h$ and collisions between the $\omega \cdot \vec h$  only occur for $O(p^{k-1})$ of these tuples, the claim follows.
\end{proof}

We can now use this cancellation to prove Theorem~\ref{siegel-uniform}.

\begin{proof}[Proof of Theorem~\ref{siegel-uniform}]  We may assume $N$ is sufficiently large depending on $k$, and allow all implied constants to depend on $k$. Obviously we may assume that a $Q$-Siegel zero exists, as the claim is trivial otherwise. 

We first establish \eqref{lamsig}. It suffices to show the polynomial (in $q_\Siegel$) bound
\begin{align}\label{cramersiegel}
\| \Lambda_{\Cramer,Q} \chi_\Siegel (\cdot)^{\beta-1} \|_{U^k[N]} \ll q_\Siegel^{-c}
\end{align}
where $(\cdot)^{\beta-1}$ denotes the function $n \mapsto n^{\beta-1}$.
By the fundamental theorem of calculus, we have
\begin{equation}\label{int}
 1_{[N]}(t) t^{\beta-1} = \int_1^N 1_{[M]}(t) (1-\beta) M^{\beta-2}\ dM + N^{\beta-1} 1_{[N]}(t)
\end{equation}
and
\begin{equation}\label{int2}
1 = \int_1^N (1-\beta) M^{\beta-2}\ dM + N^{\beta-1}.
\end{equation}
Substituting \eqref{int} and \eqref{int2} on the left and right-hand sides of \eqref{cramersiegel}, respectively, and applying Minkowski's integral inequality to the Banach space norm $\|\cdot\|_{U^k[N]}$, it  suffices to show that\footnote{Alternatively, instead of applying Minkowski's integral inequality one could open the definition of the $U^k[N]$ norm, exchange the order of integration and averaging, and apply the Gowers--Cauchy--Schwarz inequality.}
$$
\| \Lambda_{\Cramer,Q} \chi_\Siegel 1_{[M]} \|_{U^k[N]} \ll q_\Siegel^{-c}.
$$
uniformly for all $1 \leq M \leq N$.
By Definition~\ref{gowers-norm}, we can rewrite this estimate as
\begin{equation}\label{als}
 \sum_{(n,\vec h) \in \Omega \cap \Z^{k+1}} \prod_{\omega \in \{0,1\}^k} \Lambda_{\Cramer,Q} \chi_\Siegel(n+\omega \cdot \vec h) \ll N^{k+1} q_\Siegel^{-c}
\end{equation}
for some $c>0$ and all $1 \leq M \leq N$, where $\Omega = \Omega_M$ is the convex body
$$ \Omega \coloneqq \{ (x,\vec y) \in \R^{k+1}: 0 < x+\omega \cdot \vec y \leq M \hbox{ for all } \omega \in \{0,1\}^k \}.$$
Splitting $n,h_1,\dots,h_k$ into cosets of $q_\Siegel$, we can write the left-hand side of \eqref{als} as
\begin{equation}\label{chos}
 \sum_{(a,\vec b) \in [q_\Siegel]^{k+1}} \prod_{\omega \in \{0,1\}^k} \chi_\Siegel(a + \omega \cdot \vec b) G(a,\vec b)
\end{equation}
where
$$ G(a,\vec b) \coloneqq \sum_{(n,\vec h) \in \frac{1}{q_\Siegel} (\Omega-(a,\vec b)) \cap \Z^{k+1}} \prod_{\omega \in \{0,1\}^k} \Lambda_{\Cramer,Q}(q_\Siegel n+q_\Siegel \omega \cdot \vec h + a + \omega \cdot \vec b).$$
Applying Proposition~\ref{lineq} (with $N$ replaced by $N/q_\Siegel$), we can estimate
$$ G(a,\vec b) = q_\Siegel^{-k-1} \mathrm{vol}(\Omega) \prod_{p<z} \beta_p + O( (N/q_\Siegel)^{k+1} \exp(-c \log^{4/5} N))$$
where
$$ \beta_p \coloneqq \E_{(n,\vec h) \in (\Z/p\Z)^{k+1}} \prod_{\omega \in \{0,1\}^k}  \frac{p}{p-1} 1_{q_\Siegel n+q_\Siegel \omega \cdot \vec h + a + \omega \cdot \vec b \neq 0}.$$
Because of the $\chi_\Siegel$ factor in \eqref{chos}, we can restrict attention to the case where $a + \omega \cdot \vec b$ is coprime to $q_\Siegel$.  This implies that $\beta_p = (\frac{p}{p-1})^{2^k}$ when $p|q_\Siegel$.  When $p \nmid q_\Siegel$, we can dilate $n,\vec h$ by $1/q_\Siegel$ (performing the division over the field $\Z/p\Z$) and then shift both variables to simplify
$$ \beta_p = \E_{(n,\vec h) \in (\Z/p\Z)^{k+1}} \prod_{\omega \in \{0,1\}^k}  \frac{p}{p-1} 1_{n+\omega \cdot \vec h \neq 0}.$$
In particular the $\beta_p$ are not dependent on $a,\vec b,q_{\textnormal{Siegel}}$.  Summing in $a,\vec b$, we can thus write the left-hand side of \eqref{als} as
$$ \mathrm{vol}(\Omega) \prod_{p<z} \beta_p \E_{(a,\vec b) \in [q_\Siegel]^{k+1}} \prod_{\omega \in \{0,1\}^k} \chi_\Siegel(a + \omega \cdot \vec b) + O( N^{k+1} \exp(-c \log^{4/5} N) ).$$
The error term is certainly negligible.  From Lemma~\ref{gnc} we have
$$ \E_{(a,\vec b) \in [q_\Siegel]^{k+1}} \prod_{\omega \in \{0,1\}^k} \chi_\Siegel(a + \omega \cdot \vec b) \ll q_\Siegel^{-1/4}$$
(say), and we can of course bound $\mathrm{vol}(\Omega) \ll N^{k+1}$.  Finally, direct calculation shows that $\beta_p = 1 + O(1/p^2)$ when $p \nmid q_\Siegel$, thus
$$ \prod_{p<z} \beta_p \ll \prod_{p|q_\Siegel} \left(\frac{p}{p-1}\right)^{2^k} \ll \left(\frac{q_\Siegel}{\phi(q_\Siegel)}\right)^{2^k} \ll (\log\log q_\Siegel)^{2^k}$$
thanks to \eqref{sumdq}.  Putting these estimates together, we obtain the claim \eqref{lamsig}.

Now we establish \eqref{musig}, which is a similar calculation but a little more involved because of the $\mu_\local$ factor.  
By Lemma~\ref{variant}, \eqref{qsie} it suffices to show that
$$
\sum_{d|q_\Siegel} \frac{1}{d} \| \mu_{\Siegel}(d \cdot) 1_{(\cdot,q_\Siegel/d)=1} \|_{U^k[N/d]}
\ll q_\Siegel^{-c}$$
for some $c>0$ depending on $k$.
From \eqref{sumdq} we have
$$
 \sum_{d|q_\Siegel} \frac{1}{d} \ll \log\log q_\Siegel
$$
so it suffices to show that
$$ \| \mu_{\Siegel}(d \cdot) 1_{(\cdot,q_\Siegel/d)=1}  \|_{U^k[N/d]} \ll q_\Siegel^{-c}$$
for each $d|q_\Siegel$.  

Fix $d$.  We rewrite this estimate as
\begin{equation}\label{moa-0}
 \| \mu_{\Siegel}(d \cdot) 1_{(\cdot,q_\Siegel/d)=1} 1_{[N/d]} \|_{\tilde U^k(\Z)}^{2^k} \ll q_\Siegel^{-c}
\| 1_{[N/d]} \|_{\tilde U^k(\Z)}^{2^k}.
\end{equation}
Using Definition~\ref{siegel-model}, we can write
\begin{align}\label{moa}\begin{split}
 &\mu_{\Siegel}(d n) 1_{(\cdot,q_\Siegel/d)=1}(n) 1_{[N/d]}(n) \\
 &=\alpha \sum_{d' \in {\mathcal D}} \mu(d) \mu(d') 1_{d'|n} (n/d')^{\beta-1} \chi_\Siegel(n/d') 1_{(n/d',P(Q))=1} 1_{[N/d]}(n)
 \end{split}
\end{align}
where ${\mathcal D}$ is the set of all $d' | P(Q)$ with $(d',q_\Siegel)=1$.  By Lemma~\ref{alpha-bound}, it thus suffices to show that
$$ \left\|\sum_{d' \in {\mathcal D}} \mu(d') 1_{d'|\cdot} (\cdot/d')^{\beta-1} \chi_\Siegel(\cdot/d') 1_{(\cdot/d',P(Q))=1} 1_{[N/d]} 
\right\|_{\tilde U^k(\Z)}^{2^k} \ll q_\Siegel^{-c}
\| 1_{[N/d]} \|_{\tilde U^k(\Z)}^{2^k}.$$
Using \eqref{int}, \eqref{int2} and Minkowski's integral inequality, it suffices to show
\begin{equation}\label{moa-2}
 \left\|\sum_{d' \in {\mathcal D}} \mu(d') 1_{d'|\cdot} \chi_{\Siegel,M}(\cdot/d')  1_{[N/d]}
\right\|_{\tilde U^k(\Z)}^{2^k} \ll q_\Siegel^{-c}
\| 1_{[N/d]} \|_{\tilde U^k(\Z)}^{2^k}
\end{equation}
for any $M \geq 1$, where
$$ \chi_{\Siegel,M}(n) \coloneqq \chi_\Siegel(n) 1_{[M]}(n) 1_{(n,P(Q))=1}.$$

We decompose $\mathcal{D}={\mathcal D}_{\leq} \cup {\mathcal D}_{>}$, where ${\mathcal D}_{\leq}$ are those $d' \in {\mathcal D}$ with $d' \leq \exp(\log^{1/2} N)$ (say) and ${\mathcal D}_>$ are those $d' \in {\mathcal D}$ with $d' > \exp(\log^{1/2} N)$.  
We first dispose of the contribution of the large $d'$, i.e. those that satisfy $d' \in {\mathcal D}_>$.  Their contribution to the expression inside the norm on the left-hand side of \eqref{moa-2} is supported on a set of numbers $n$ of size
$$ \sum_{d' \in {\mathcal D}_>} \frac{N}{dd'}.$$
From basic estimates on smooth numbers~\cite[Theorem 1.1]{hildebrand-tenenbaum}, the number of elements of ${\mathcal D}_>$ in any dyadic range $[M,2M]$ with $M \in [Q,N]$ is $O( M u^{-u/2})$ (say) where $u \coloneqq \frac{\log M}{\log Q}$.  From this and a routine dyadic decomposition we see that
$$ \sum_{d' \in {\mathcal D}_>} \frac{N}{dd'} \ll N \exp(-\log^{-1/10} N)$$
(say).  We thus see that the contribution to the left-hand side of \eqref{moa-2} can be bounded by $O( N^{k+1} \exp(-\log^{-1/10} N))$, which is acceptable.  Thus, by the triangle inequality \eqref{triangle}, it suffices to control the contribution of ${\mathcal D}_{\leq}$, i.e. to show that
$$
 \left\|\sum_{d' \in {\mathcal D}_{\leq}} \mu(d') 1_{d'|\cdot} \chi_{\Siegel,M} 1_{[N/d]} 
\right\|_{\tilde U^k(\Z)}^{2^k} \ll q_\Siegel^{-c}
\| 1_{[N/d]} \|_{\tilde U^k(\Z)}^{2^k}
$$
We can expand out the left-hand side as
$$\sum_{d' \in {\mathcal D}_{\leq}^{\{0,1\}^k}} A_{d'}$$
where for $d' = (d'_\omega)_{\omega \in \{0,1\}^k}$ we have
$$ A_{d'} \coloneqq \sum_{(n,\vec h) \in \Omega} \prod_{\omega \in \{0,1\}^k} \mu(d'_\omega) 1_{d'_\omega|n+\omega \cdot \vec h} \chi_{\Siegel,M}\left(\frac{n+\omega \cdot \vec h}{d'_\omega}\right) $$
where $\Omega$ is the set of all tuples $(n,\vec h) \in \Z^{k+1}$ such that $n+\omega \cdot \vec h \in [N/d]$ for all $\omega \in \{0,1\}^k$.
Meanwhile, using the pointwise bound
$$ 0 \leq \sum_{d' \in {\mathcal D}_{\leq}}  1_{d'|\cdot} 1_{[N/d]} 1_{(\cdot/d',P(Q)) = 1} \leq 1_{[N/d]}$$
(reflecting the fact that every number $n$ has a unique decomposition $n = d' (n/d')$ where $d'|P(Q)$ and $(n/d',P(Q))=1$)
one has
$$\sum_{d' \in {\mathcal D}_{\leq}^{\{0,1\}^k}} B_{d'} \leq \| 1_{[N/d]} \|_{\tilde U^k(\Z)}^{2^k}
$$
where
$$ B_{d'} \coloneqq \sum_{(n,\vec h) \in \Omega} \prod_{\omega \in \{0,1\}^k} 1_{d'_\omega|n+\omega \cdot \vec h} 1_{(\frac{n+\omega \cdot \vec h}{d'_\omega},P(Q)) = 1}.$$
Hence it will suffice to show that
$$ A_{d'} \ll q_\Siegel^{-c} B_{d'}$$
for all $d' \in {\mathcal D}_{\leq}^{\{0,1\}^k}$.

The constraints $1_{d'_\omega|n+\omega \cdot \vec h}$ restrict $(n,\vec h)$ to some finite union of cosets $(a,\vec b)+D\Z^{k+1}$ of $D \Z^{k+1}$ where $D \coloneqq \prod_{\omega \in \{0,1\}^k} d'_\omega$, with the property that $d'_\omega$ divides $a+\omega \cdot \vec b$ for all $\omega \in \{0,1\}^k$.  Note from construction that $D$ is coprime to $q_\Siegel$ and of size $O( \exp( O( \log^{1/2} N)))$.  So, denoting for brevity $\Omega^{(a,\vec b)}:=\Omega\cap ((a,\vec b)+D\mathbb{Z}^{k+1})$, it will suffice to show that
\begin{equation}\label{ban}
 \sum_{(n,\vec h) \in \Omega^{(a,\vec b)}} \prod_{\omega \in \{0,1\}^k} \chi_{\Siegel,M}\left(\frac{n+\omega \cdot \vec h}{d'_\omega}\right) \ll q_\Siegel^{-c}
\sum_{(n,\vec h) \in \Omega^{(a,\vec b)}} \prod_{\omega \in \{0,1\}^k} 1_{(\frac{n+\omega \cdot \vec h}{d'_\omega},P(Q)) = 1}
\end{equation}
for all such cosets $(a,\vec b) + D\Z^{k+1}$.   Using Proposition~\ref{lineq} and some elementary rescaling, we have
$$
\sum_{(n,\vec h) \in \Omega^{(a,\vec b)}} \prod_{\omega \in \{0,1\}^k} 1_{(\frac{n+\omega \cdot \vec h}{d'_\omega},P(Q)) = 1}
= D^{-k-1} \mathrm{vol}(\Omega) \prod_{p<Q} \tilde \beta_p + O((N/D)^{k+1} \exp(-c \log^{4/5} N))$$
where
$$ \tilde \beta_p \coloneqq \E_{(n,\vec h) \in (\Z/p\Z)^{k+1}} \prod_{\omega \in \{0,1\}^k} 1_{\frac{a+\omega \cdot \vec b}{d'_\omega} + (n + \omega \cdot \vec h) \frac{D}{d'_\omega} \neq 0}.$$
If any of the $\tilde{\beta}_p$ vanish then both sides of \eqref{ban} vanish and we are done.  For $p$ not dividing $D$ we have the crude bound
\begin{equation}\label{crude-1}
 \tilde \beta_p = 1 - O(1/p)
\end{equation}
and for all $p$ we have the lower bound
\begin{equation}\label{crude-2}
\tilde \beta_p \geq \frac{1}{p^{k+1}}
\end{equation}
since the $\tilde \beta_p$ are non-vanishing integer multiples of $1/p^{k+1}$.  This gives the crude lower bound
\begin{equation}\label{crude-lower}
 \prod_{p<Q} \tilde \beta_p \gg D^{-O(1)} \log^{-O(1)} N
\end{equation}
and hence the right-hand side of \eqref{ban} is comparable to $q_\Siegel^{-c} (N/D)^{k+1} \prod_{p<Q} \tilde \beta_p$.  Next, we partition the left-hand side of \eqref{ban} as
\begin{equation}\label{qq}
 \sum_{(r,\vec s) \in [q_\Siegel]^{k+1}} \prod_{\omega \in \{0,1\}^k} \chi_\Siegel(d'_\omega) \chi_\Siegel(r + \omega \cdot \vec s) F_{r,\vec s}
\end{equation}
where
$$ F_{r,\vec s} \coloneqq \sum_{(n,\vec h) \in \Omega \cap ((a,\vec b) + D \Z^{k+1}) \cap ((r,\vec s) + q_\Siegel \Z^{k+1})} 
\prod_{\omega \in \{0,1\}^k} 1_{[M]}\left(\frac{n + \omega \cdot \vec h}{d'_\omega}\right) 1_{(n+\omega \cdot \vec h,P(Q))=1}.$$
We can restrict attention to those $(r,\vec s)$ for which $r + \omega \cdot \vec s$ is coprime to $q_\Siegel$ for all $\omega \in \{0,1\}^k$, since otherwise the product in \eqref{qq} vanishes.  Under this assumption, we can apply Proposition~\ref{lineq}, the Chinese remainder theorem, and some further rescaling (using the fact that $D, q_\Siegel$ are coprime), to conclude that
$$ 
F_{r,\vec s} = (D q_\Siegel)^{-k-1} \mathrm{vol}(\Omega') \prod_{\substack{p<Q\\ p \nmid q_\Siegel}} \tilde \beta_{p} + O\left( (N/Dq_\Siegel)^{k+1} \exp(-c \log^{4/5} N) \right)$$
where
$$ \Omega' \coloneqq \left\{ (n,\vec h) \in \Omega: \frac{n + \omega \cdot \vec h}{d'_\omega} \in [M] \,\, \forall \omega \in \{0,1\}^k \right\}.$$
Note the main term here is independent of $r,\vec s$. In particular, we can rewrite \eqref{qq} as
\begin{align*}
&\left(\prod_{\omega \in \{0,1\}^k} \chi_\Siegel(d'_\omega)\right) \| \chi_\Siegel \|_{\tilde{U}^k(\Z/q_\Siegel)}^{2^k} (Dq_\Siegel)^{-k-1}
\mathrm{vol}(\Omega') \prod_{\substack{p<Q\\ p \nmid q_\Siegel}} \tilde \beta_{p}\\
&+ O( (N/D)^{k+1} \exp(-c \log^{4/5} N) ).
\end{align*}
Applying Lemma~\ref{gnc}, this quantity is
\begin{equation}\label{al}
\ll q_\Siegel^{-c} (N/D)^{k+1} \prod_{\substack{p<Q\\ p \nmid q_\Siegel}} \tilde \beta_{p} + (N/D)^{k+1} \exp(-c \log^{4/5} N).
\end{equation}
The second term in \eqref{al} is acceptable thanks to \eqref{crude-lower}.  From \eqref{crude-1}, \eqref{crude-2}, \eqref{sumdq} we have
$$\prod_{p | q_\Siegel} \tilde \beta_{p}  \gg (\log\log q_\Siegel)^{-O(1)}$$
and so the first term in \eqref{al} is also acceptable.
\end{proof}

\section{The Manners inverse theorem}\label{sec:manners}

We are now ready to state a version of the inverse theorem of Manners~\cite{manners}, though formulated in a slightly different language (in particular, using the complexity notions from~\cite{gt-leibman} rather than~\cite{manners}).  

\begin{definition}[Nilmanifolds]\label{filnil}  Let $s \geq 1$ be an integer, and let $M>0$.  A (filtered) nilmanifold $G/\Gamma$ of degree $s$ and complexity at most $M$ consists of the following data:
\begin{itemize}
\item[(i)]  A nilpotent connected and simply connected Lie group $G$ of some dimension $m$, which can be identified with its Lie algebra $\log G$ via the exponential map $\exp \colon \log G \to G$ or its inverse $\log \colon G \to \log G$;
\item[(ii)]  A filtration $G_\bullet = (G_i)_{i \geq 0}$ of closed connected subgroups $G_i$ of $G$ with 
$$ G = G_0 = G_1 \geq G_2 \geq \dots \geq G_s \geq G_{s+1} = \{ \mathrm{id}_G\}$$
(and $G_i$ trivial for all $i \geq s+1$), such that\footnote{We use $[,]$ to denote both the commutator in the Lie group $G$ and the Lie bracket in the Lie algebra $\log G$, with the two being related to each other by the Baker--Campbell--Hausdorff formula.} $[G_i,G_j] \subset G_{i+j}$ for all $i,j \geq 0$ (or equivalently, $[\log G_i, \log G_j] \subset \log G_{i+j}$ in the Lie algebra $\log G$);
\item[(iii)]  A discrete cocompact subgroup $\Gamma$ of $G$;
\item[(iv)] A linear basis $X_1,\dots,X_{\mathrm{dim} G}$ of $\log G$, known as a \emph{Mal'cev basis (of the second kind)}.
\end{itemize}
We require this data to obey the following axioms:
\begin{itemize}
\item[(a)] For $1 \leq i, j \leq \mathrm{dim}(G)$, one has
\begin{equation}\label{xio}
 [X_i,X_j] = \sum_{i,j < k \leq \mathrm{dim}(G)} c_{ijk} X_k
\end{equation}
for some rational numbers $c_{ijk}$ with numerator and denominator bounded in magnitude by $M$.
\item[(b)]  For each $1 \leq i \leq s$, the Lie algebra $\log G_i$ is spanned by the $X_j$ with $\mathrm{dim}(G) - \mathrm{dim}(G_i) < j \leq \mathrm{dim}(G)$.
\item[(c)]  The subgroup $\Gamma$ consists of all elements of the form $\exp(t_1 X_1) \cdots \exp(t_{\mathrm{dim} G} X_{\mathrm{dim} G})$ with $t_1,\dots,t_{\mathrm{dim} G} \in \Z$.
\end{itemize}
This data defines a metric on $G/\Gamma$ as described in~\cite[Definition 2.2]{gt-leibman}, as well as the notion of a polynomial map $g \colon \Z \to G$, defined in~\cite[Definition 1.8]{gt-leibman}.
\end{definition}

A function $f: X \to \C$ is said to be \emph{$1$-bounded} if $|f(n)| \leq 1$ for all $n \in X$.  

\begin{theorem}[Manners inverse theorem]\label{manners-inv}  Let $0 < \delta < 1$.  Let $f: [N] \to \C$ be a $1$-bounded function such that
$$ \|f\|_{U^k[N]} \geq \delta.$$
Then there exist a (filtered) nilmanifold $G/\Gamma$ of degree $k-1$, dimension $O(\delta^{-O(1)})$, and complexity at most $\exp\exp(O(1/\delta^{O(1)}))$, a $1$-bounded Lipschitz function $F \colon G/\Gamma \to \C$ of Lipschitz constant at most $\exp\exp(O(1/\delta^{O(1)}))$, and a polynomial map $g \colon \Z \to G$, such that
$$ |\E_{n \in [N]} f(n) \overline{F}(g(n) \Gamma)| \gg \exp(-\exp(O(1/\delta^{O(1)}))).$$
\end{theorem}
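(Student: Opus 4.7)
The plan is to follow the inductive strategy of Manners, establishing the inverse theorem for the $U^k$ norm by reducing to the $U^{k-1}$ case with a careful accounting of how the parameters deteriorate. The base case $k=2$ is handled by a quantitative Freiman--Ruzsa type argument: if $\|f\|_{U^2([N])} \geq \delta$, Plancherel and Cauchy--Schwarz extract a set of Fourier coefficients of combined mass $\gg \delta^{O(1)}$, and a polynomial Bogolyubov--Chang argument produces a single linear phase $e(\xi n)$ correlating with $f$ to level $\gg \delta^{O(1)}$, which gives the $k=2$ statement with polynomial bounds (dimension $1$, constant complexity).

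For the inductive step, assume the $U^{k-1}$ inverse theorem with bounds of the stated shape. The identity $\|f\|_{U^k}^{2^k} = \E_h \|\Delta_h f\|_{U^{k-1}}^{2^{k-1}}$, where $\Delta_h f(n) \coloneqq f(n)\overline{f(n+h)}$, implies that for a set of shifts $h$ of density $\gg \delta^{O(1)}$ one has $\|\Delta_h f\|_{U^{k-1}([N])} \gg \delta^{O(1)}$. Applying the inductive hypothesis, for each such $h$ one obtains a $(k-2)$-step nilsequence $F_h(g_h(n)\Gamma_h)$ correlating with $\Delta_h f$, where the dimension is polynomial in $1/\delta$ and the complexity and Lipschitz constant are doubly exponential in $1/\delta$. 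The polynomial bound on the dimension is essential, since otherwise iterating across levels $k$ would blow up the dimension uncontrollably.

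The main obstacle, and the core of Manners' contribution, is the \emph{symmetry step}: one must show that the family $\{F_h(g_h(\cdot)\Gamma_h)\}_h$ can be chosen so that its dependence on $h$ is itself polynomial, allowing integration into a single $(k-1)$-step nilsequence correlating with $f$. The strategy is to first collect the $F_h$ on a common nilmanifold (paying an exponential cost in the dimension to take a large product), then exploit the identity $\Delta_{h_1}f \cdot \overline{\Delta_{h_2}f} \cdot \overline{\Delta_{h_3}f} \cdot \Delta_{h_4}f$ vanishing in suitable norms when $h_1-h_2-h_3+h_4 = 0$; combined with repeated Gowers--Cauchy--Schwarz this forces a group-theoretic compatibility on the polynomial maps $g_h$ in the form of a cocycle/coboundary relation. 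A quantitative inverse Freiman-type argument applied to this compatibility extracts an honest polynomial dependence $h \mapsto g_h$, at the cost of another doubly exponential factor in complexity. This two-stage cost (exponential from the common nilmanifold, exponential from the inverse Freiman step on the cocycle) is precisely what produces the $\exp\exp(O(1/\delta^{O(1)}))$ complexity bound.

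Finally, one assembles the $g_h$ into a single polynomial map $g \colon \Z \to G$ into a $(k-1)$-step filtered nilmanifold $G/\Gamma$ in the sense of Definition \ref{filnil}, verifying the filtration axiom by Baker--Campbell--Hausdorff and producing a Mal'cev basis of bounded denominators by standard rationality arguments. The Lipschitz function $F$ is then obtained as an integral/product of the $F_h$, and the correlation bound is read off by tracing the chain of Cauchy--Schwarz inequalities backwards. The hardest technical point throughout is keeping the dimension polynomial at each induction step; all of the doubly exponential losses are confined to complexity and Lipschitz norm, which is exactly the asymmetric bound stated in the theorem and exactly what is needed in our application after applying $\log\log$ to obtain doubly logarithmic decay in Theorem \ref{main-2}.
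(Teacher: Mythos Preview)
Your proposal attempts something quite different from what the paper does. The paper does \emph{not} reprove Manners' inverse theorem; it cites \cite[Theorem 1.1.2]{manners} as a black box and then carries out two short translation steps. First, it passes from $[N]$ to a cyclic group $\Z/N'\Z$ (with $N'$ a prime in $[10N,20N]$) so that Manners' theorem applies directly. Second, it reconciles two minor differences in conventions: it lifts the polynomial map from $\Z/N'\Z$ to $\Z$ via \cite[Proposition C.17]{manners}, and it converts Manners' group-commutator axiom \eqref{expi} on the Mal'cev basis into the Lie-bracket axiom \eqref{xio} of Definition \ref{filnil} by a short downward induction using Baker--Campbell--Hausdorff. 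That is the entire content of the proof here.

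Your sketch, by contrast, tries to rederive Manners' theorem from scratch via the inductive $U^{k-1} \Rightarrow U^k$ strategy. Even setting aside that this is far more than what is needed, the sketch has genuine gaps. The $U^2$ base case does not require any Freiman--Ruzsa or Bogolyubov--Chang machinery: $\|f\|_{U^2([N])} \geq \delta$ and Plancherel immediately give a single frequency $\xi$ with $|\E_{n \in [N]} f(n) e(-n\xi)| \gg \delta^2$. More seriously, your account of the symmetry and integration steps (``collect the $F_h$ on a common nilmanifold \dots\ a cocycle/coboundary relation \dots\ a quantitative inverse Freiman-type argument'') is too schematic to pin down the actual mechanism in \cite{manners}, and does not explain why the \emph{dimension} stays polynomial in $1/\delta$ while only the complexity absorbs the double exponential; this dimension control is the delicate point and is not achieved by the classical Gowers--Green--Tao--Ziegler approach. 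In short, your route is not wrong in spirit, but it is a multi-hundred-page programme rather than a proof, and the paper deliberately avoids it by quoting \cite{manners} directly.
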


\begin{proof}  By Bertrand's postulate we can find a prime $N'$ such that $10 N \leq N' \leq 20N'$.  If we embed $[N]$ into the cyclic group $\Z/N'\Z$ and extend $f$ by zero we may view $f$ as a $1$-bounded function on $\Z/N'\Z$, and a brief calculation reveals that
$$ \|f\|_{U^k(\Z/N'\Z)} \gg \delta.$$
We now apply~\cite[Theorem 1.1.2]{manners} with $s \coloneqq k-1$ to produce the required data $G/\Gamma$, $g$, $F$, $X_i$, save for two differences.  Firstly, the polynomial $g$ is described as a map from $\Z/N'\Z$ to $G/\Gamma$ rather than from $\Z$ to $G$, but one can lift the map from the former to the latter using~\cite[Proposition C.17]{manners}.  Secondly, instead of axiom (a) of Definition~\ref{filnil}, the basis elements $X_i$ are instead required to obey a decomposition
\begin{equation}\label{expi}
 [\exp(X_i), \exp(X_j)] = \prod_{i,j < l \leq \mathrm{dim}(G)} \exp( a_{ijl} X_l )
\end{equation}
for some integers $a_{ijl}$ bounded  in magnitude by some bound $M_0 \ll \exp\exp(O(1/\delta^{O(1)}))$, where the product is taken from left to right.  	However, as briefly noted in~\cite[\S C.2]{manners}, one can pass from this control \eqref{expi} to the control \eqref{xio} (with $M$ a suitable polynomial of $M_0$), as follows.  For any $1 \leq a \leq k-1$, we let $P(a)$ denote the claim that one has \eqref{xio} with $M$ of the form $\exp\exp(O(1/\delta^{O(1)}))$ whenever one of $X_i,X_j$ lies in $\log G_a$.  The claim $P(a)$ is certainly true for $a=k-1$ since $\log G_{k-1}$ is central, and we will be done if $P(1)$ is true, so it suffices by downward induction (with at most $k-2$ steps) to show that $P(a+1)$ implies $P(a)$ for any $1 \leq a \leq k-2$, where the implied constants in the $O_k()$ notation are allowed to vary with each step of the induction.  Call a rational number \emph{good} if its numerator and denominator are bounded in magnitude by $\exp\exp(O(1/\delta^{O(1)}))$.  If one of $X_i, X_j$ lie in $\log G_a$, then from \eqref{expi}, the induction hypothesis, and the Baker--Campbell--Hausdorff formula we see that
\begin{equation}\label{lij}
 \log[\exp(X_i), \exp(X_j)] = \sum_{l > i,j} c'_{ijl} X_l
\end{equation}
for some good rationals $c'_{ijl}$ (and furthermore one can restrict to those $X_k$ lying in $\log G_{a+1}$).  On the other hand, a further application of Baker--Campbell--Hausdorff reveals that $\log[\exp(X_i), \exp(X_j)]$ is equal to $[X_i,X_j]$ plus $O_k(1)$ additional terms, which consist of a good rational number times an iterated Lie bracket formed by starting with $[X_i,X_j]$ and taking the Lie bracket with either $X_i$ or $X_j$ one or more times (but no more than $O(1)$ times in all).  Inverting this formula, we can then write $[X_i,X_j]$ as $\log[\exp(X_i),\exp(X_j)]$ plus $O(1)$ additional terms, which consist of a good rational number times an iterated Lie bracket formed by starting with $\log[\exp(X_i), \exp(X_j)]$ and taking the Lie bracket with either $X_i$ or $X_j$ one or more times (but no more than $O(1)$ times in all).  Using \eqref{lij} and the induction hypothesis $P(a+1)$ repeatedly, we conclude $P(a)$, thus closing the induction.
\end{proof}

\begin{remark} As noted in~\cite{manners}, improved bounds are available for $k \leq 4$~\cite{gt-U3,gt-U4}, but we will not be able to take advantage of these bounds due to inefficiencies elsewhere in the arguments (in particular, our nilsequence equidistribution theory involves exponents that are exponential in the dimension rather than polynomial).
\end{remark}

From Lemma~\ref{point-bound} we see that the function $\mu - \mu_\Siegel$ can be made $1$-bounded by multiplying 
by a small absolute constant.  Applying Theorem~\ref{manners-inv} in the contrapositive (setting $\delta$ equal to a small power of $(\log\log N)^{-1}$, we conclude that the bound \eqref{musi} is an immediate consequence of \eqref{mmus}.  The same argument does not work directly for $\Lambda - \Lambda_\Siegel$ due to the additional factor of $\log N$ in the pointwise bounds; but we will be able to get around this in Section~\ref{sec: densification} by employing the densification technology of Conlon, Fox, and Zhao~\cite{cfz-rel}.  Assuming this for the moment, the only remaining step needed to establish Theorem~\ref{main} is to prove Theorem~\ref{quant-ortho}, to which we now turn.

\begin{remark}\label{improv-2} When $k=3$, one can appeal instead of Theorem~\ref{manners-inv} to the quantitative inverse theorem in~\cite{gt-U3}, and when $k=4$ one can use the fact that Manners proved in~\cite{manners} a stronger form of Theorem~\ref{manners-inv} for $k=4$ than for $k\geq 5$.  If one does so, one eventually finds that one would be able to improve the doubly logarithmic bounds in Theorem~\ref{main} for $k\leq 4$ to singly logarithmic, provided that one could increase the bound on the dimension of $G/\Gamma$ in Theorem~\ref{quant-ortho} from $(\log\log N)^{c_1}$ to $\log^{c_1} N$.  Unfortunately, our equidistribution theory on nilmanifolds is currently not satisfactory at this high a dimension, although in principle it is conceivable that some variant of the methods of Schmidt~\cite{schmidt} could resolve this issue.  We will not pursue this question further here.
\end{remark}

\section{Orthogonality to nilsequences}\label{sec:mobius}

In this section we prove Theorem~\ref{quant-ortho}.  
We begin by establishing Proposition~\ref{equiprop}, which will be used to establish the ``major arc'' case of Theorem~\ref{quant-ortho}.

\begin{proof} (Proof of Proposition~\ref{equiprop})  
We adopt the convention that any factor involving the $Q$-Siegel character $\chi_\Siegel$ is deleted if no such character exists.
Any arithmetic progression $P \subset [N]$ can be expressed in the form $\{ N'' < n \leq N': n = a\ (q) \}$ for some $1 \leq a \leq q$ and $0 < N'' \leq N' \leq N$.  By the triangle inequality, it thus suffices to establish the bounds
\begin{equation}\label{lam-equi}
\sum_{\substack{n \leq N'\\ n = a\ (q)}} \Lambda(n) = \sum_{\substack{n \leq N'\\ n = a\ (q)}} \Lambda_{\Siegel}(n)+O(N\exp(-c\log^{1/10}N))
\end{equation}
and
\begin{equation}\label{mu-equi}
\sum_{\substack{n \leq N'\\ n = a\ (q)}} \mu(n) = \sum_{\substack{n \leq N'\\ n = a\ (q)}} \mu_{\Siegel}(n)+O(N\exp(-c\log^{1/10}N))
\end{equation}
for any $1 \leq a \leq q$ and $0 < N' \leq N$.

If $q > \exp(c_2 \log^{1/10} N)$ for any constant $c_2>0$ then the triangle inequality (and Lemma~\ref{point-bound}) give the desired bounds after adjusting the value of $c$, so we may assume that $q \leq \exp(c_2 \log^{1/10} N)$ for some small absolute constant $c_2$.  In particular $q \leq Q$.  Similarly we may assume $N' \geq N \exp(-c_2\log^{1/10} N)$.

We begin with \eqref{lam-equi}.  From~\cite[Theorem 5.27]{iwaniec-kowalski} one has
$$
 \sum_{\substack{n \leq N'\\ n = a\ (q)}} \Lambda(n) = \frac{N'}{\phi(q)} \left(1 - \chi_\Siegel(a) 1_{q_\Siegel|q} \frac{(N')^{\beta-1}}{\beta}\right) 1_{(a,q)=1}+O(N\exp(-c\log^{1/10}N)).$$
Therefore, it will certainly suffice from the triangle inequality to show for $1\leq a\leq q\leq \exp(\log^{3/5}N)$ that\footnote{It would of course suffice to show this for $q\leq \exp(\log^{1/10}N)$ and with savings $\exp(-c\log^{1/10}N)$, but the larger powers of $\log N$ will be useful later on.}
\begin{equation}\label{lamavg}
 \sum_{\substack{n \leq N'\\ n = a\ (q)}} \Lambda_{\Cramer,Q}(n) = \frac{N'}{\phi(q)} 1_{(a,q)=1}+O(N\exp(-c\log^{4/5}N))
\end{equation}
and
\begin{equation}\label{lamsiavg}
 \sum_{\substack{n \leq N'\\ n = a\ (q)}} (\Lambda_{\Cramer,Q}(n) - \Lambda_{\Siegel}(n)) = \frac{(N')^\beta}{\beta \phi(q)} \chi_\Siegel(a) 1_{q_\Siegel|q} 1_{(a,q)=1}+O(N\exp(-c\log^{4/5}N)).  
\end{equation}
We first show \eqref{lamavg}.  By a change of variables we have
$$ \sum_{\substack{n \leq N'\\ n = a\ (q)}} \Lambda_{\Cramer,Q}(n) = \sum_{\frac{-a}{q} \leq n \leq \frac{N'-a}{q}} \Lambda_{\Cramer,Q}(qn+a)$$
and then on applying Proposition~\ref{lineq} we have
$$ \sum_{\substack{n \leq N'\\ n = a\ (q)}} \Lambda_{\Cramer,Q}(n) = \frac{N'}{q} \prod_{p<Q} \beta_p+O(N \exp(-c \log^{4/5} N))$$
where
$$ \beta_p \coloneqq \E_{n \in \Z/p\Z} \frac{p}{p-1} 1_{qn+a \neq 0}.$$
If $(a,q)>1$ then $(a,q)$ will be divisible by some prime $p \leq q <Q$, in which case $\beta_p=0$ and the claim follows.  If instead $(a,q)=1$,
then $\beta_p=1$ for all $p<Q$ not dividing $q$, and $\beta_p = \frac{p}{p-1}$ for all $p<Q$ dividing $q$, and the claim \eqref{lamavg} follows.

Now we show \eqref{lamsiavg}.  We may of course assume there is a $Q$-Siegel zero, in which case (by Definition~\ref{siegel-model}(ii)) our task is to show that
$$
\sum_{\substack{n \leq N'\\ n = a\ (q)}} \Lambda_{\Cramer,Q}(n) n^{\beta-1} \chi_\Siegel(n) = \frac{(N')^\beta}{\beta \phi(q)} \chi_\Siegel(a) 1_{q_\Siegel|q} 1_{(a,q)=1}+O(N\exp(-c\log^{4/5}N)).$$
From the fundamental theorem of calculus we have
$$ n^{\beta-1} 1_{[N']}(n) = \int_1^{N'} (1-\beta) M^{\beta-2} 1_{[M]}(n)\ dM+ (N')^{\beta-1} 1_{[N']}(n)$$
and
$$ \frac{(N')^\beta}{\beta} - \frac{1}{\beta}+1 = \int_1^{N'} (1-\beta) M^{\beta-2} M\ dM + (N')^{\beta-1} N'$$
so from the triangle inequality it suffices to show that
$$
\sum_{\substack{n \leq M\\ n = a\ (q)}} \Lambda_{\Cramer,Q}(n) \chi_\Siegel(n) = \frac{M}{\phi(q)} \chi_\Siegel(a) 1_{q_\Siegel|q} 1_{(a,q)=1}+O(N\exp(-c\log^{4/5}N))$$
for all $1 \leq M \leq N$.  We split the left-hand side as
$$ \sum_{\substack{1 \leq b \leq q'\\ b = a\ (q)}} \chi_\Siegel(b) \sum_{\substack{n \leq M\\ n = b\ (q')}} \Lambda_{\Cramer,Q}(n)$$
where $q' \coloneqq [q,q_\Siegel]$ is the least common multiple of $q$ and $q_\Siegel$.  By  \eqref{lamavg} we have
$$ \sum_{\substack{n \leq M\\ n = b\ (q')}} \Lambda_{\Cramer,Q}(n) = \frac{M}{\phi(q')} 1_{(b,q')=1} + O( N \exp(-c \log^{4/5} N))$$
and thus
$$
\sum_{\substack{n \leq M\\ n = a\ (q)}} \Lambda_{\Cramer}(n) \chi_\Siegel(n) = \frac{M}{\phi(q')} 
\sum_{\substack{1 \leq b \leq q'\\ b = a\ (q)}} \chi_\Siegel(b) 1_{(b,q')=1}+O(N\exp(-c\log^{4/5}N)).$$
The right-hand side vanishes if $(a,q)>1$, and also vanishes if $q' > q$ due to the orthogonality properties of Dirichlet characters.  If instead $(a,q)=1$ and $q'=q$ then the right-hand side is equal to $\frac{M}{\phi(q)} \chi_\Siegel(a)$, and the claim \eqref{lamsiavg} follows.  

Now we turn to \eqref{mu-equi}.  We first do an easy reduction to the case of primitive residue classes.  Let $d \coloneqq (a,q)$.  Observe that for any natural number $n$ one has
$$ \mu(dn) = \mu(d) \mu(n) 1_{(n,d)=1}$$
and also from Definition~\ref{siegel-model}(ii) we similarly have
$$ \mu_\Siegel(dn) = \mu(d) \mu_\Siegel(n) 1_{(n,d)=1}$$
and thus
\begin{equation}\label{almo}
\begin{split}
\sum_{\substack{n \leq N'\\ n = a\ (q)}} (\mu(n) - \mu_{\Siegel(n)}) &= \mu(d) \sum_{\substack{n \leq N'/d\\ n = a/d\ (q/d)\\ (n,d)=1}}(\mu(n) - \mu_{\Siegel}(n))\\
&=\mu(d) \sum_{\substack{1 \leq b \leq d\\ (b,d)=1}} \sum_{\substack{n \leq N'/d\\ n = a/d\ (q/d)\\ n=b\ (d)}}(\mu(n) - \mu_{\Siegel}(n)).
\end{split}
\end{equation}
Since $d \leq q \leq \exp(c_2\log^{1/10} N)$, it thus suffices to establish the pseudopolynomial decay estimate
$$ \sum_{\substack{n \leq N'/d\\ n = a/d\ (q/d)\\ n=b\ (d)}} (\mu(n) - \mu_{\Siegel}(n)) \ll N \exp( - c \log^{1/10} N )$$
for all $1 \leq b \leq d$ coprime to $d$ (where the constant $c$ here is uniform in $c_2$).  Writing $q' \coloneqq [q/d,d]$, we see from the Chinese remainder theorem that the constraints
$n = a/d\ (q/d); n=b\ (d)$ are either inconsistent, or constrain $n$ to precisely one primitive residue class $a'\ (q')$ with $(a',q')=1$.  Thus it suffices to show the pseudopolynomial decay bound
$$ \sum_{\substack{n \leq N'\\ n = a'\ (q')}}(\mu(n) - \mu_{\Siegel(n)}) \ll N \exp( - c \log^{1/10} N )$$
whenever $1 \leq N' \leq N$ and $1 \leq a' \leq q' \leq \exp(2c_2 \log^{1/10} N)$ with $(a',q')=1$.

When there is no $Q$-Siegel zero the claim is immediate from~\cite[Exercise 11.3.12]{mv} (modified slightly due to our slightly different definition of a Siegel zero).  Now suppose that there is a $Q$-Siegel zero.  The result previously cited in~\cite[Exercise 11.3.12]{mv} (again modified slightly to account for our slightly different notion of Siegel zero) then gives the pseudopolynomially accurate asymptotic
$$ \sum_{\substack{n \leq N'\\ n = a'\ (q')}} \mu(n) = 1_{q_\Siegel|q'} \frac{\chi_{q'}(a') (N')^\beta}{\phi(q') L'(\beta,\chi_{q'}) \beta}  + O( N \exp(-c \log^{1/10} N) )$$
where $\chi_{q'}(n) \coloneqq \chi_\Siegel(n) 1_{(n,q')=1}$ is the character of modulus $q'$ induced from $\chi_\Siegel$ when $q'$ is a multiple of $q_\Siegel$.  Note that
$$ L(s,\chi_{q'}) = L(s,\chi_\Siegel) \prod_{\substack{p|q'\\ p \nmid q_\Siegel}} \left(1 - \frac{\chi_\Siegel(p)}{p^s}\right),$$
and thus by the product rule (and the fact that $L(\beta,\chi_{Siegel})=0$)
$$ L'(\beta,\chi_q) = L'(\beta,\chi_\Siegel) \prod_{\substack{p|q'\\ p \nmid q_\Siegel}} \left(1 - \frac{\chi_\Siegel(p)}{p^\beta}\right).$$
We conclude that
\begin{align*}
 \sum_{\substack{n \leq N'\\ n = a'\ (q')}} \mu(n) &= 1_{q_\Siegel|q'} \frac{(N')^\beta \chi_\Siegel(a')}{\beta \phi(q) L'(\beta,\chi_\Siegel)} \prod_{\substack{p|q'\\ p \nmid q_\Siegel}} \left(1 - \frac{\chi_\Siegel(p)}{p^\beta}\right)^{-1}\\
&+ O( N \exp(-c \log^{1/10} N) )
\end{align*}
It will thus suffice to establish the corresponding pseodupolynomially accurate asymptotic
\begin{align}\label{qp}\begin{split}
 \sum_{\substack{n \leq N'\\ n = a'\ (q')}} \mu_\Siegel(n) &= 1_{q_\Siegel|q'} \frac{(N')^\beta \chi_\Siegel(a')}{\beta \phi(q') L'(\beta,\chi_\Siegel)} \prod_{\substack{p|q'\\ p \nmid q_\Siegel}} \left(1 - \frac{\chi_\Siegel(p)}{p^\beta}\right)^{-1}\\
 &+ O( N \exp(-c \log^{1/10} N) )
 \end{split}
\end{align}
for $\mu_\Siegel$.  It suffices to establish the variant estimate
\begin{equation}\label{mup2}
 \sum_{\substack{n \leq N'\\ n = a'\ (q')}} \mu_\Siegel(n) = \frac{(N')^\beta \chi_\Siegel(a')}{\beta \phi(q') L'(\beta,\chi_\Siegel)} \prod_{\substack{p|q'\\ p \nmid q_\Siegel}} \left(1 - \frac{\chi_\Siegel(p)}{p^\beta}\right)^{-1} + O( N \exp(-c \log^{1/10} N) )
\end{equation}
(say) whenever $1 \leq a' \leq q' \leq \exp(O(\log^{1/10} N))$ with $(a',q')=1$ and $q_\Siegel|q'$.  Indeed, this estimate immediately implies \eqref{qp} when $q_\Siegel$ divides $q'$, and when $q_\Siegel$ does not divide $q'$, one splits up the primitive residue class $a'\ (q')$ into primitive residue classes modulo $[q',q_\Siegel]$ on the support of $\mu_\Siegel$, applies \eqref{mup2} to each such class, and sums, using the orthogonality of Dirichlet characters to cancel out the main term.  

We use Definition~\ref{siegel-model} to expand the left-hand of \eqref{mup2} as
$$ \sum_{d \in {\mathcal D}} \mu(d) \sum_{\substack{n \leq N'/d\\ dn = a'\ (q')}} \mu'(n)$$
where ${\mathcal D}$ consists of all the factors $d$ of $P(Q)$ with $(d,q')=1$. As in the proof of \eqref{moa-2}, we can decompose ${\mathcal D}_{\leq} \cup {\mathcal D}_{>}$, where ${\mathcal D}_{\leq}$ are those $d' \in {\mathcal D}$ with $d' \leq \exp(\log^{1/2} N)$ (say) and ${\mathcal D}_>$ are those $d' \in {\mathcal D}$ with $d' > \exp(\log^{1/2} N)$.  The contribution of ${\mathcal D}_{>}$ can be disposed of by the same argument used to prove \eqref{moa-2}, so it remains to show that
\begin{align*}
\sum_{d \in {\mathcal D}_{\leq}} \mu(d) \sum_{\substack{n \leq N'/d\\ dn = a'\ (q')}} \mu'(n)
&= \frac{(N')^\beta \chi_\Siegel(a')}{\beta \phi(q') L'(\beta,\chi_\Siegel)} \prod_{\substack{p|q'\\ p \nmid q_\Siegel}} \left(1 - \frac{\chi_\Siegel(p)}{p^\beta}\right)^{-1}\\
&+ O( N \exp(-c \log^{1/10} N) ).
\end{align*}
By Definition~\ref{siegel-model}, we have
$$ \sum_{\substack{n \leq N'/d\\ dn = a'\ (q')}} \mu'(n) = 
\alpha \frac{\phi(P(Q))}{P(Q)}  \sum_{\substack{n \leq N'/d\\ n = a'/d\ (q')}} (\Lambda_{\Cramer,Q}(n) - \Lambda_{\Siegel}(n)).$$
Applying \eqref{lamsiavg}, as well as Lemma~\ref{alpha-bound}, we can write this as
$$ \alpha \frac{\phi(P(Q))}{P(Q)} \frac{(N'/d)^\beta}{\beta \phi(q')} \chi_\Siegel(a') \chi_\Siegel(d) $$
up to acceptable error terms.  Canceling some terms, it thus suffices to show that
\begin{align*} \alpha \frac{\phi(P(Q))}{P(Q)} \sum_{d \in {\mathcal D}_{\leq}} \frac{\mu(d) \chi_\Siegel(d)}{d^\beta}
&= \frac{1}{L'(\beta,\chi_\Siegel)} \prod_{\substack{p|q'\\ p \nmid q_\Siegel}} \left(1 - \frac{\chi_\Siegel(p)}{p^\beta}\right)^{-1}\\
&+ O( \exp(-c \log^{1/10} N) ).
\end{align*}
A standard Euler product calculation using \eqref{alpha-def} gives
$$ \alpha \frac{\phi(P(Q))}{P(Q)} \sum_{d \in {\mathcal D}} \frac{\mu(d) \chi_\Siegel(d)}{d^\beta}
= \frac{1}{L'(\beta,\chi_\Siegel)} \prod_{\substack{p|q'\\ p \nmid q_\Siegel}} \left(1 - \frac{\chi_\Siegel(p)}{p^\beta}\right)^{-1} $$
so it suffices to show that
$$ \alpha \frac{\phi(P(Q))}{P(Q)} \sum_{d \in {\mathcal D}_>} \frac{\mu(d) \chi_\Siegel(d)}{d^\beta}
\ll \exp(-c \log^{1/10} N) )$$
By Lemma~\ref{alpha-bound} and the triangle inequality it suffices to show that
$$ \sum_{d \in {\mathcal D}_>} \frac{1}{d^\beta} \ll \exp(-c \log^{1/5} N) ).$$
But we can bound 
$$\frac{1}{d^{\beta}} \leq \frac{1}{d^{\beta - \log^{-1/10} N}} \exp( - c \log^{2/5} N )
\leq \frac{1}{d^{1 - 2\log^{-1/10} N}} \exp( - c \log^{2/5} N )$$
when $d \in {\mathcal D}_>$, and from Euler products we have
$$ \sum_{d \in {\mathcal D}} \frac{1}{d^{1 - 2\log^{-1/10} N}} 
\leq \prod_{p \leq Q} \left(1 + \frac{1}{p^{1 - 2\log^{-1/10} N}}\right) \ll \exp(O(\log \log N))$$
and the claim follows.
\end{proof}

We return now to the proof of Theorem~\ref{quant-ortho}. Throughout this section we assume that $\epsilon>0$ is fixed and small in terms of $k$, and that $c_1(\epsilon)>0$ is sufficiently small depending on $k$ (and we reserve the right to decrease $c_1(\epsilon)$ later in the argument as necessary).  We can assume that $N$ is sufficiently large depending on $c_1(\epsilon),k$, as the claim is trivial otherwise.  Let $P$, $G/\Gamma$, $F$, $g$ be as in that theorem. We use $m = O((\log\log N)^{c_1(\epsilon)})$ to denote the dimension of $G$; to avoid some minor notational issues we will assume that $m \geq 2$ (as can be achieved trivially by adding some dummy dimensions).  

We repeat the arguments from~\cite{gt-mobius}, but now performing a more quantitative accounting of the dependence on constants (particularly on the dimension).  We first use a dimension-uniform version of the factorization theorem in~\cite[Theorem 1.19]{gt-leibman}, which we establish in Theorem~\ref{factor-thm}.  We apply that theorem with $M_0 \coloneqq \exp(\log^{1/10-\epsilon/2} N)$ and $A \coloneqq \exp((\log \log N)^{1/2})$ to obtain a quantity
\begin{align}\label{mbound}\exp(\log^{1/10-\epsilon/2} N) \leq M \leq \exp(\log^{1/10-\epsilon/3} N),
\end{align}
a subgroup $G' \subset G$ which is $M$-rational with respect to $G$, and a decomposition $g = \eps g' \gamma$ into polynomial sequences $\eps, g', \gamma \colon \Z \to G$ such that
\begin{itemize}
\item[(i)] $\eps$ is $(M,N)$-smooth;
\item[(ii)] $g'$ takes values in $G'$ and $(g'(n)\Gamma)_{n \in [N]}$ is totally $1/M^A$-equidistributed in $G'/\Gamma'$, with respect to a Mal'cev basis ${\mathcal X}'$ consisting of $M$-rational linear combinations of the basis elements of ${\mathcal X}$;
\item[(iii)] $\gamma$ is $M$-rational and $\gamma(n) \Gamma$ is periodic with period at most $M$.
\end{itemize}

We can partition the arithmetic progression $P$ into $O( M^{m^{O(1)}})$ components $P'$, such that on each of these components the periodic function $\gamma(n) \Gamma$ is equal to an $M$-rational constant $\gamma_{P'} \Gamma$, and the smooth sequence $\eps$ differs by at most $O( M^{-m^C} )$ from a constant $\eps_{P'} \in G$ of distance at most $M$ from the origin, for a large constant $C$.  We can also normalize $\gamma_{P'}$ to be distance $O(M^{m^{O(1)}})$ from the origin.  From this and the Lipschitz nature of $F$, we see (for $C$ large enough) that
$$ F(g(n) \Gamma) = F(\eps_{P'} g'(n) \gamma_{P'} \Gamma ) + O( M^{-1} )$$
for $n \in P'$.  By \eqref{mbound}, the triangle inequality, and Lemma~\ref{point-bound}, it thus suffices to establish the bounds
$$ \sum_{n \in P'} (\mu - \mu_{\Siegel})(n) \overline{F}(\eps_{P'} g'(n) \gamma_{P'} \Gamma) \ll N M^{\exp(m^{O(1)})} ( \exp(- \log^{1/10-\epsilon/4} N) + M^{-A/\exp(m^{O(1)})})$$
and
$$ \sum_{n \in P'} (\Lambda - \Lambda_{\Siegel})(n) \overline{F}(\eps_{P'} g'(n) \gamma_{P'} \Gamma) \ll N M^{\exp(m^{O(1)})} ( \exp(- \log^{1/10-\epsilon/4} N) + M^{-A/\exp(m^{O(1)})} )$$
for all of the progressions $P'$, where the implied constants in the $O(1)$ notation on the right-hand sides of the estimates can be taken to be uniform in $\epsilon$ for $\epsilon$ sufficiently small.  We introduce the conjugated group
$$ G_{P'} \coloneqq \gamma_{P'}^{-1} G' \gamma_{P'}$$
and conjugated polynomial
$$ g_{P'} \coloneqq \gamma_{P'}^{-1} g' \gamma_{P'}$$
that takes values in $G_{P'}$, and the normalized function
$$ F_{P'}(x) \coloneqq \overline{F}(\eps_{P'} \gamma_{P'} x ) - \int_{G_{P'} / (G_{P'} \cap \Gamma)} \overline{F}(\eps_{P'} \gamma_{P'} \cdot )$$
where the integral is with respect to the Haar probability measure on $G_{P'} / (G_{P'} \cap \Gamma)$ (which we can view as a subnilmanifold of $G/\Gamma$).  Using Proposition~\ref{equiprop} to dispose of the contribution of the constant $\int_{G_{P'} / (G_{P'} \cap \Gamma)} \overline{F}(\eps_{P'} \gamma_{P'} \cdot )$ (which can be viewed as the ``major arc'' contribution to these correlations), we are reduced to establishing the bounds
$$ \sum_{n \in P'} (\mu - \mu_{\Siegel})(n) F_{P'}(g_{P'}(n) \Gamma) \ll N M^{\exp(m^{O(1)})} ( \exp(- \log^{1/10-\epsilon/4} N) + M^{-A/\exp(m^{O(1)})} )$$
and
$$ \sum_{n \in P'} (\Lambda - \Lambda_{\Siegel})(n) F_{P'}(g_{P'}(n) \Gamma) \ll N M^{\exp(m^{O(1)})} ( \exp(- \log^{1/10-\epsilon/4} N) + M^{-A/\exp(m^{O(1)})} ).$$
The advantages of this reduction are that the function $F_{P'}$ is not only $1$-bounded and $O(M^{m^{O(1)}})$-Lipschitz (with respect to the Mal'cev basis of $G_{P'} / (G_{P'} \cap \Gamma)$, which is a filtered nilmanifold of complexity $O(M^{m^{O(1)}})$), but it also has mean zero.  By repeating the arguments from \cite[p. 547]{gt-mobius} and keeping track of the constants, we see that the polynomial sequence $g_{P'}$ is totally $1/M^{A / m^{O(1)}}$-equidistributed (note that multiplicative factors of $\exp(\exp(m^{O(1)}))$ can be absorbed into the $M^{A / m^{O(1)}}$ denominator, and that all the $O_m(1)$ exponents appearing in this portion of \cite{gt-mobius} (and \cite{gt-leibman}) are polynomial in $m$).

We can use the Gowers uniformity of $\chi_\Siegel$ to obtain the following bound on the Siegel terms which is acceptable when $q_\Siegel$ is large enough:

\begin{proposition}  We have
$$ \sum_{n \in P'} \mu_{\Siegel}(n) F_{P'}(g_{P'}(n) \Gamma) \ll N M^{m^{O(1)}} q_\Siegel^{-1 / m^{O(1)}} $$
and
$$ \sum_{n \in P'} (\Lambda_{\Siegel}(n) -\Lambda_{\Cramer,Q}(n)) F_{P'}(g_{P'}(n) \Gamma )\ll N M^{m^{O(1)}} q_\Siegel^{-1 / m^{O(1)}}.$$
\end{proposition}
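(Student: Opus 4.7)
The plan is to combine the $U^k[N]$-norm bounds supplied by Theorem~\ref{siegel-uniform} with a quantitative, dimension-uniform generalized von Neumann inequality for nilsequences, thereby reducing the nilsequence correlations in the proposition to the already-established Gowers norm control on the Siegel corrections.

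By $k$ rounds of iterated Cauchy--Schwarz (equivalently, the Gowers--Cauchy--Schwarz inequality \eqref{gcz-norm}) combined with the total $1/M^{A/m^{O(1)}}$-equidistribution of $g_{P'}$, I would establish a generalized von Neumann bound of the shape
\begin{equation*}
 \left|\sum_{n \in P'} a(n) F_{P'}(g_{P'}(n)\Gamma)\right| \ll N M^{m^{O(1)}} \bigl(\|a \cdot 1_{P'}\|_{U^k[N]} + M^{-A/m^{O(1)}}\bigr)
\end{equation*}
for any $1$-bounded function $a \colon [N] \to \C$. In this bound the $M^{m^{O(1)}}$ prefactor absorbs the Lipschitz constants produced by the $k$-fold differencing of $F_{P'}$, while the $M^{-A/m^{O(1)}}$ term accounts for the quantitative equidistribution defect of the product polynomial sequence $\vec h \mapsto (g_{P'}(n+\omega\cdot \vec h))_{\omega \in \{0,1\}^k}$ taking values in the $2^k m$-dimensional product nilmanifold $G_{P'}^{2^k}/\Gamma^{2^k}$. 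Applied with $a = \mu_\Siegel$, which is $O(1)$-bounded by Lemma~\ref{point-bound}, and invoking $\|\mu_\Siegel\|_{U^k[N]} \ll q_\Siegel^{-c}$ from \eqref{musig}, this immediately yields the first estimate of the proposition. For $\Lambda_\Siegel - \Lambda_{\Cramer,Q}$, which is only $O(\log N)$-bounded, one first rescales by a factor of $\log N$, absorbing the resulting $(\log N)^{O(1)}$ loss into the $M^{m^{O(1)}}$ factor, and then applies the same inequality together with \eqref{lamsig} to conclude.

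The principal obstacle is verifying the generalized von Neumann inequality above with the indicated polynomial-in-$m$ dimension dependence, since a naive application of standard nilsequence equidistribution theory on the $2^k m$-dimensional product nilmanifold $G_{P'}^{2^k}/\Gamma^{2^k}$ would typically lose factors of the order $M^{O(m^2)}$ or worse, which would be fatal when $m = (\log\log N)^{c_1(\epsilon)}$ is allowed to grow. Keeping this loss at most $M^{m^{O(1)}}$ requires the dimension-uniform quantitative equidistribution theorem developed elsewhere in this section, and in particular a careful bookkeeping of how the Lipschitz constants, Mal'cev basis complexities, and rationality parameters transform under the $2^k$-fold product construction. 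Since $A = \exp((\log\log N)^{1/2})$ is enormous compared to any polynomial in $m = (\log\log N)^{c_1(\epsilon)}$, the equidistribution error $M^{-A/m^{O(1)}}$ is completely dominated by the $q_\Siegel^{-c}$ savings supplied by Theorem~\ref{siegel-uniform} (noting $q_\Siegel \leq Q$ and hence $q_\Siegel^{-c} \leq q_\Siegel^{-1/m^{O(1)}}$), so the resulting bound is of the shape $N M^{m^{O(1)}} q_\Siegel^{-1/m^{O(1)}}$ claimed in the statement.
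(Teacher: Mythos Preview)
Your high-level strategy---reduce the nilsequence correlation to a $U^k[N]$-norm bound and then invoke Theorem~\ref{siegel-uniform}---is exactly right, and this is also what the paper does. The implementation, however, differs.

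The paper does not prove a generalized von Neumann inequality by iterated Cauchy--Schwarz and equidistribution on a $2^k$-fold product nilmanifold. Instead it invokes \cite[Proposition~11.2]{gt-linear}, which for any Lipschitz nilsequence (no equidistribution hypothesis needed) gives a decomposition
\[
F_{P'}(g_{P'}(n)\Gamma) = F_1(n) + F_2(n)
\]
with $\E_{n\in[N]} f(n)F_1(n) \ll (M/\eps)^{m^{O(1)}} \|f\|_{U^k[N]}$ for all $f$, and $|F_2(n)| \leq \eps$ pointwise, for a free parameter $\eps$. One then handles $1_{P'}$ by Fourier expansion (as you implicitly do), plugs in the Gowers-norm bounds \eqref{musig}, \eqref{lamsig}, and optimizes in $\eps$. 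This route is cleaner because it avoids any appeal to the total equidistribution of $g_{P'}$; in particular the bound in the proposition contains no $M^{-A/m^{O(1)}}$ term, and you do not have to argue separately that such a term is negligible.

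Your proposed route is in principle viable, but the sketch understates the work involved. The phrase ``$k$ rounds of iterated Cauchy--Schwarz'' does not by itself produce a bound of the shape $\|a\cdot 1_{P'}\|_{U^k[N]}$: after $k$ differencings one is left with an expression of the form
\[
\E_{\vec h}\,\E_n \Bigl(\prod_\omega \mathcal{C}^{|\omega|}(a\,1_{P'})(n+\omega\cdot\vec h)\Bigr)\Bigl(\prod_\omega \mathcal{C}^{|\omega|} F_{P'}(g_{P'}(n+\omega\cdot\vec h)\Gamma)\Bigr),
\]
and the crux is to show that the second product is (approximately) independent of $n$ for generic $\vec h$. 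This is the degree-lowering phenomenon for nilsequences---that a $(k-1)$-fold multiplicative derivative of a degree-$(k-1)$ nilsequence is essentially constant---and establishing it with the required $m^{O(1)}$-in-the-exponent dimension dependence is precisely the content that \cite[Proposition~11.2]{gt-linear} packages. So your ``principal obstacle'' is genuine, and resolving it essentially reproduces the decomposition result the paper cites. Framing it as equidistribution of the joint sequence on $G_{P'}^{2^k}/\Gamma^{2^k}$ is not quite right either: the relevant object is the Host--Kra cube group (or Gowers anti-diagonal), not the full product, and controlling that with polynomial-in-$m$ exponents is nontrivial.
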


\begin{proof}  We apply~\cite[Proposition 11.2]{gt-linear}, noting that all bounds\footnote{The argument as stated in that paper appeals to the Stone--Weierstrass theorem and the Arzel\'a--Ascoli theorem, but this can be replaced by more quantitative approximation results without difficulty, such as~\cite[Lemma A.9]{gt-quadratic}, combined with standard smooth partitions of unity to allow one to work on regions such as the unit cube rather than on the original nilmanifold. As pointed out to us by James Leng, the required smoothness bounds on the function $P$ constructed in \cite[Proposition 11.5]{gt-linear} also need to be established.  To do this, one can first take advantage of the fact that $\operatorname{HK}^{s+1}(G)$ acts transitively on the graph of $P$ to reduce to establishing smoothness bounds at the origin.  Then one can lift from $G/\Gamma$ to $G$, and reduce to establishing that one corner of a parallelepiped in $HK^{s+1}(G)$ is a smooth function of all the other corners near the origin with the required bounds.  But one can express the first corner as a word in the other corners of length depending only on $s$, and from many applications of the Baker--Campbell--Hausdorff formula this will give the desired quantitative bounds on this corner completion function.} can be shown to be polynomial in the parameters $M, \eps$ with exponents that are polynomial in the dimension $m$, to decompose
$$ F_{P'}(g_{P'}(n) \Gamma) = F_1(n) + F_2(n)$$
where $F_1$ obeys the dual norm bound
$$ \E_{n \in [N]} f(n) F_1(n) \ll (M/\eps)^{m^{O(1)}} \|f\|_{U^k[N]}$$
for any $f \colon [N] \to \C$, and $F_2$ obeys the pointwise bound
$$ F_2(n) \ll \eps$$
for all $n \in [N]$.  Here $0 < \eps \leq 1$ is a parameter that we are at liberty to choose.  By Theorem~\ref{siegel-uniform}, the functions $\mu_\Siegel, \Lambda_{\Siegel} - \Lambda_{\Cramer,Q}$ already have a $U^k[N]$ norm of $O(q_\Siegel^{-c})$; a standard Fourier expansion of $1_{P'}(n)$ in terms of additive characters and the triangle inequality then show that the truncated versions $1_{P'} \mu_\Siegel$, $1_{P'} (\Lambda_{\Siegel} - \Lambda_{\Cramer,Q})$ have a $U^k[N]$ norm of $O(M^{O(1)} q_\Siegel^{-c})$ (note that any logarithmic factors can be easily absorbed into the $M^{O(1)}$ factor).  Applying the above decomposition as well as Lemma~\ref{point-bound}, we see that
$$ \sum_{n \in P'} \mu_{\Siegel}(n) F_{P'}(g_{P'}(n) \Gamma) \ll N M^{O(1)} (M/\eps)^{m^{O(1)}} q_\Siegel^{-c} + \eps N $$
and
$$ \sum_{n \in P'} (\Lambda_{\Siegel}(n) -\Lambda_{\Cramer,Q}(n)) F_{P'}(g_{P'}(n) \Gamma) \ll N M^{O(1)} (M/\eps)^{m^{O(1)}} q_\Siegel^{-c} + \eps N \log N,$$
and the claim then follows by a suitable choice of $\eps$ (noting that the $\log N$ factor can be absorbed into the $M$ factor).
\end{proof}

Based on this proposition, we may now delete the $Q$-Siegel zero contributions except in the regime where
\begin{equation}\label{siegel-small}
q_\Siegel \leq M^{A/\exp(m^{C_1})}
\end{equation}
where $C_1$ is a large constant depending on $k$ (but not on $\epsilon$) that we are at liberty to choose; we can also assume $N$ to be sufficiently large depending on $C_1$ (as well as $k$ and $\epsilon$).  To simplify the notation we assume henceforth that the $Q$-Siegel zero exists and obeys \eqref{siegel-small}; the remaining cases follow by a simplified version of the same argument that deletes all the steps and terms that treat the contribution of the $Q$-Siegel zero.  It will now suffice to obtain estimates of the form
$$ \sum_{n \in P'} (\mu - \mu_{\Siegel})(n) F_{P'}(g_{P'}(n) \Gamma) \ll N (M q_\Siegel)^{\exp(m^{O(1)})} ( \exp(-\log^{1/10-\epsilon/4} N) + M^{-A/\exp(m^{O(1)})} )$$
and
$$ \sum_{n \in P'} (\Lambda - \Lambda_{\Siegel})(n) F_{P'}(g_{P'}(n) \Gamma) \ll N (M q_\Siegel)^{\exp(m^{O(1)})} ( \exp(- \log^{1/10-\epsilon/4} N) + M^{-A/\exp(m^{O(1)})} ),$$
where the implied constants do not depend on $C_1$.

To treat these sums, we make the following standard Vaughan-type decompositions.  Call a sequence $a_d, d \in \N$ of complex numbers \emph{divisor bounded} if one has $a_d \ll (\log N)^{O(1)} \tau^{O(1)}(d)$ for all $d \in [N]$, where $\tau(n) \coloneqq \sum_{d|n} 1$ is the divisor function.

\begin{proposition}[Vaughan-type decompositions]  Any of the four functions $\mu, \mu_\Siegel, \Lambda, \Lambda_\Siegel$ on $[N]$ can be expressed as a convex linear combination of functions one of the following four classes (with uniform constants in the bounds):
\begin{itemize}
\item[(i)] (Type I sum)  A function of the form
$$ n \mapsto \sum_{d \leq N^{2/3}} a_d 1_{d|n} 1_{[N']}(n)$$
where the coefficients $a_d$ are divisor-bounded and $1 \leq N' \leq N$.
\item[(ii)] (Twisted type I sum)  A function of the form
$$ n \mapsto \sum_{d \leq N^{2/3}} a_d 1_{d|n} \chi_\Siegel(n/d) 1_{[N']}(n)$$
where the coefficients $a_d$ are divisor-bounded and $1 \leq N' \leq N$.
\item[(iii)] (Type II sum)  A function of the form
$$ n \mapsto \sum_{d,w > N^{1/3}} a_d b_w1_{dw=n}$$
for some divisor-bounded coefficients $a_d, b_w$.
\item[(iv)] (Negligible sum) A divisor-bounded function $n \mapsto f(n)$ with
$$ \sum_{n \in [N]} |f(n)| \ll N \exp(-\log^{1/2} N).$$
\end{itemize}
\end{proposition}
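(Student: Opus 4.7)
The plan is to handle the four functions in two groups: classical Vaughan-type identities for $\mu$ and $\Lambda$, and a smooth/rough divisor split combined with partial summation for $\mu_\Siegel$ and $\Lambda_\Siegel$. For $\Lambda$, I would apply Vaughan's identity with parameters $U = V = N^{1/3}$, decomposing $\Lambda$ into a short Type I piece arising from $\sum_{d\mid n,\,d\leq U}\mu(d)\log(n/d)$, a longer Type I piece of length at most $N^{2/3}=UV$ coming from the triple convolution, and a Type II bilinear form with both factors exceeding $N^{1/3}$, all with divisor-bounded coefficients. The spurious $\log n$ factor arising from $\log(n/d)=\log n-\log d$ is converted to Type I form via the identity $\log n \cdot 1_{[N']}(n)=\log N'\cdot 1_{[N']}(n)-\int_1^{N'}(1/t)1_{[t]}(n)\,dt$, which writes $\log n$ on $[N']$ as a bounded-mass combination (up to the $\log N$ multiplier absorbed into the divisor-boundedness convention) of indicators $1_{[t]}$. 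The parallel Heath--Brown identity with $k=2$ applied to $\mu$ produces Type I sums of length at most $N^{2/3}$ and Type II sums with both variables exceeding $N^{1/3}$; no twisted or negligible pieces arise in these classical cases.

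For the Siegel models I would start from the Möbius expansion
\[
\Lambda_{\Cramer,Q}(n) = \frac{P(Q)}{\phi(P(Q))} \sum_{d\mid (n,P(Q))} \mu(d)
\]
and split the $d$-sum at $N^{2/3}$. The small-divisor piece is already a Type I sum with coefficients $a_d=\frac{P(Q)}{\phi(P(Q))}\mu(d)1_{d\mid P(Q)}$, which are divisor bounded since $P(Q)/\phi(P(Q))\ll\log\log N$ by Mertens. The large-divisor piece is supported on integers $n\leq N$ possessing a $Q$-smooth divisor exceeding $N^{2/3}$; by the Hildebrand--Tenenbaum smooth number bound already invoked in the proof of \eqref{moa-2}, its $\ell^1$-mass is $O(N\exp(-\log^{1/2}N))$, which lies in the negligible class. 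For the Siegel correction $\Lambda_{\Cramer,Q}(n)n^{\beta-1}\chi_\Siegel(n)$, I would factor $\chi_\Siegel(n)=\chi_\Siegel(d)\chi_\Siegel(n/d)$ on each divisor term and pull the $n^{\beta-1}$ outside the $d$-sum, producing $n^{\beta-1}\sum_{d\mid n,\,d\mid P(Q)}a_d\chi_\Siegel(n/d)$ with divisor-bounded $a_d$. The analogous manipulation for $\mu_\Siegel=\mu_\local*\mu'$, after expanding $1_{(m,P(Q))=1}=\sum_{e\mid(m,P(Q))}\mu(e)$ inside $\mu'(m)$ and reparametrizing the resulting double convolution by a single divisor $d$, yields an expression of the same shape, with coefficients $\alpha \cdot c_d/d^{\beta-1}$ for $c_d=\sum_{e\mid d}\mu(d/e)\mu(e)\chi_\Siegel(e)e^{\beta-1}$; the trivial bound $|c_d|\leq \tau(d)d^{\beta-1}$ combined with $\alpha=O(1)$ from Lemma~\ref{alpha-bound} confirms divisor-boundedness.

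To remove the remaining $n^{\beta-1}$ factor I would invoke the identity
\[
n^{\beta-1}1_{[N']}(n)=\int_1^{N'}(1-\beta)M^{\beta-2}1_{[M]}(n)\,dM + (N')^{\beta-1}1_{[N']}(n),
\]
already used in the proofs of Proposition~\ref{equiprop} and Theorem~\ref{siegel-uniform}: its weights are nonnegative and integrate to exactly $1$, so it exhibits the Siegel corrections as a genuine convex combination (in $M$) of twisted Type I sums of the required shape. The $d>N^{2/3}$ tails arising at this step are again absorbed into a negligible sum by the same smooth-number argument, using the trivial bound $n^{\beta-1}\leq 1$. The main bookkeeping obstacle, and the only step requiring real care, is the combined $\mu_\local$-convolution and Möbius expansion for $\mu_\Siegel$: ensuring that the triple sum collapses cleanly into a single divisor and that the resulting coefficient survives with merely $\tau(d)$-type growth requires carefully tracking the cancellation $d^{\beta-1}\cdot d^{1-\beta}=1$ between the prefactor $n^{\beta-1}$ and the $(n/d)^{\beta-1}$ appearing inside the divisor sum. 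Every other step is routine.
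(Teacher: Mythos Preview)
Your approach is essentially correct and, for the Siegel models, takes a somewhat different route from the paper. For $\Lambda$ and $\mu$ both arguments use the standard Vaughan/Heath--Brown identities. For $\Lambda_\Siegel$ and $\mu_\Siegel$ the paper instead approximates $1_{(n,P(Q))=1}$ by linear sieve weights $\lambda_d^{\pm}$ of level $Q^{\log^{1/2}N}$ (so the approximant is already a short Type~I sum, with the fundamental lemma controlling the $\ell^1$ discrepancy), and then obtains $\mu_\Siegel$ by convolving this approximant to $\mu'$ with a truncated $\mu_\local$. Your direct M\"obius expansion plus smooth-number cutoff is arguably more elementary and avoids the sieve machinery; the paper's route has the advantage that the divisor range stays genuinely small ($d\le\exp(O(\log^{3/5}N))$) throughout, which makes the convolution step cleaner.

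One point you gloss over deserves comment: the ``trivial bound'' $|c_d|\le\tau(d)d^{\beta-1}$ is \emph{not} literally true (for instance if $\chi_\Siegel(p)=1$ then $|c_p|=1+p^{\beta-1}>2p^{\beta-1}=\tau(p)p^{\beta-1}$), and the naive termwise bound only gives $|c_d|\le\tau(d)$, which would leave the coefficient $a_d=\alpha c_d d^{1-\beta}$ as large as $d^{1-\beta}$. What saves you is that $c_d$ is supported on $Q$-smooth integers, and for every prime $p<Q$ one has $p^{1-\beta}\le Q^{1-\beta}\le e^{c_0}=O(1)$ by the very definition of a $Q$-Siegel zero. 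Multiplicativity then gives $|c_d|\le (e^{c_0}+1)^{\omega(d)}d^{\beta-1}\ll\tau(d)^{O(1)}d^{\beta-1}$, which suffices for divisor-boundedness of $a_d$. This is the ``real care'' you flag at the end, and it is worth spelling out explicitly rather than calling it trivial. (Also, a minor slip: $P(Q)/\phi(P(Q))\asymp\log Q=\log^{1/10}N$ by Mertens, not $\log\log N$; this is still $(\log N)^{O(1)}$ so the conclusion is unaffected.)
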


\begin{proof}  For $\Lambda$ we can use the familiar Vaughan identity~\cite{vaughan}
$$ \Lambda(n) = \Lambda(n) 1_{n \leq N^{1/3}} - \sum_{d \leq N^{2/3}} a_d 1_{d|n} + \sum_{d \leq N^{1/3}} \mu(d) 1_{d|n} \log \frac{n}{d}
+ \sum_{d,w > N^{1/3}} \Lambda(d) b_w 1_{dw=n}$$
where $a_d \coloneqq \sum_{bc=d:\, b,c \leq N^{1/3}} \mu(b) \Lambda(c)$ and $b_w \coloneqq \sum_{c|w:\, c > N^{1/3}} \mu(c)$.  The first term is negligible, the second term is a Type I sum (restricting to $[N]$), and the fourth term is a Type II sum; the third term can be converted to a convex combination of Type I sums by using the fundamental theorem of calculus to write
$$ \log \frac{n}{d} = \log N - \int_1^N 1_{t>n} \frac{dt}{t} - \log d$$
and absorbing all the various logarithmic factors into the divisor-bounded coefficients.  Similarly, for $\mu$ we can use the variant identity
$$ \mu(n) =  \sum_{d \leq N^{2/3}} a'_d 1_{d|n} - \sum_{d,w > N^{1/3}} \mu(d) b_w 1_{dw=n}$$
where $a'_d \coloneqq \sum_{\substack{bc=d\\ b,c \leq N^{1/3}}} \mu(b) \mu(c)$ and $b_w$ is as before; see e.g.,~\cite[Lemma 4.1]{gt-quadratic}.

To handle $\Lambda_\Siegel$, it suffices (using the estimate $P(Q)/\phi(P(Q))\ll (\log N)^{O(1)}$ coming from Mertens' theorem) to show that the functions
\begin{equation}\label{npq}
 n \mapsto 1_{(n,P(Q))=1}
\end{equation}
and
$$ n \mapsto n^{\beta-1} 1_{(n,P(Q))=1} \chi_\Siegel(n)$$
can be expressed in the desired form (absorbing all the constant factors into the divisor-bounded coefficients).  But if $\lambda_d^{+},\lambda_d^{-}$ are the upper and lower linear sieve coefficients, respectively, with level $D=Q^{10(\log N)^{3/5}}$ and sifting parameter $Q$, one can write
\begin{align*}
\sum_{d\leq D}\lambda_d^{-}  1_{d\mid n} \leq 1_{(n,P(Q))}\leq  \sum_{d\leq D}\lambda_d^{+}  1_{d\mid n},
\end{align*}
and by the fundamental lemma~\cite[Lemma 6.3]{iwaniec-kowalski} (bounding the error terms $R^{\pm}$ there as $O(D)$) we have
\begin{align*}
\sum_{n\in [N]}\left|1_{(n,P(Q))}-\sum_{d\leq D}\lambda_d^{\pm}  1_{d\mid n}\right|\ll N\exp(-10 \log^{3/5} N)    
\end{align*}
(say).  Therefore, one can express \eqref{npq} as a Type I sum plus an error term of  $L^{1}[N]$ norm $\ll N\exp(-10 \log^{3/5} N)$, and by multiplying by $\chi_\Siegel$ one can then express $n \mapsto 1_{(n,P(Q))=1} \chi_\Siegel(n)$ as a twisted Type I sum plus an error term of  $L^{1}[N]$ norm at most $\ll N\exp(-10 \log^{3/5} N)$.  Indeed in these cases one can lower the $N^{2/3}$ threshold on $d$ to something much smaller, such as $\exp(O(\log^{7/10} N))$.  Finally, the $n^{\beta-1}$ weight can be handled using the fundamental theorem of calculus identity \eqref{int}.

Now we turn to $\mu_\Siegel = \mu_\local * \mu'$.  From the previous discussion and Lemma~\ref{alpha-bound}, $\mu'$ is already expressible as a convex combination of twisted Type I sums  (where $d$ can be constrained to be at most $\exp(O(\log^{7/10} N))$) plus  an error term of  $L^{1}[N]$ norm $\ll N\exp(-10 \log^{3/5} N)$.  We can then convolve by $\mu_\local 1_{[\exp(5\log^{3/5} N)]}$ and conclude that $\mu_\local 1_{[\exp(5\log^{3/5} N)]} * \mu'$ is also expressible as a convex combination of twisted Type I sums plus a negligible error (note that the values of $d$ encountered stay well below the threshold $N^{2/3}$).  Finally, the remaining term $\mu_\local (1 - 1_{[\exp(5\log^{3/5} N)]}) * \mu'$ can be seen to be negligible by the same arguments used to dispose of the ${\mathcal D}_>$ contributions to \eqref{moa-2} (namely, using the fact that the density of $Q$-smooth numbers in any dyadic interval $[M,2M]$ with $\exp(5\log^{3/5} N)\leq M\leq N$ is $\ll \exp(-5(\log ^{1/2} N)$).
\end{proof}

The contributions of the negligible sums to the previous estimates are acceptable from the triangle inequality.  By a further application of the triangle inequality, it thus suffices to establish the bound
\begin{equation}\label{samp}
\sum_{n \in P'} f(n) F_{P'}(g_{P'}(n) \Gamma) \ll N (M q_\Siegel)^{\exp(m^{O(1)})} M^{-A/\exp(m^{O(1)})} 
\end{equation}
whenever $f$ is a Type I sum, a twisted Type I sum, or a Type II sum.

The Type I and Type II sums were already essentially treated in~\cite[\S 3]{gt-mobius}, and it turns out that the methods also easily extend to cover the twisted Type I case.  We briefly review the argument as follows.  We begin with the twisted Type I case; the Type I case is treated by a simplification of the argument that deletes the role of the $Q$-Siegel character, and is omitted here (and in any case would follow closely the treatment in~\cite[\S 3]{gt-mobius}).  Suppose that we have
\begin{equation}\label{sumnp}
 \left|\sum_{n \in P'} f(n) F_{P'}(g_{P'}(n) \Gamma)\right| \geq \delta N
\end{equation}
for some $0 < \delta < \frac{1}{M q_\Siegel}$ and a twisted Type I sum $f$.  By the definition of such sums and the triangle inequality, this implies that
$$ \sum_{d \leq N^{2/3}} \tau^C(d) \left|\sum_{n \in P'' \cap d\Z} \chi_\Siegel(n/d)F_{P'}(g_{P'}(n) \Gamma)\right| \gg \delta^{O(1)} N  $$
for some constant $C=O(1)$, where $P'' \coloneqq P' \cap [N']$ (note that all $\log^{O(1)} N$ terms can be easily absorbed into the $\delta^{O(1)}$ factor).  Standard divisor sum estimates give
$$ \sum_{d \leq N^{2/3}} \tau^{2C}(d)/d \ll \log^{O(1)} N$$
(with the implied constant depending on $C$), hence by Cauchy--Schwarz
$$ \sum_{d \leq N^{2/3}} d\left|\sum_{n \in P'' \cap d\Z} \chi_\Siegel(n/d)F_{P'}(g_{P'}(n) \Gamma)\right|^2 \gg \delta^{O(1)} N^2,$$
and hence by dyadic decomposition there exists $1 \leq D \leq N^{2/3}$ such that
$$ \sum_{D \leq d \leq 2D} \left|\sum_{n \in P'' \cap d\Z} \chi_\Siegel(n/d)F_{P'}(g_{P'}(n) \Gamma)\right|^2 \gg \delta^{O(1)} \frac{N^2}{D}.$$
Since the inner sum is $O(N/D)$, we conclude that
$$ \left|\sum_{n \in P'' \cap d\Z} \chi_\Siegel(n/d)F_{P'}(g_{P'}(n) \Gamma)\right| \gg \delta^{O(1)} \frac{N}{D} $$
for $\gg \delta^{O(1)} D \log^{-O(1)} N$ natural numbers $d$ in $[D,2D]$.  For such a $d$, we partition into residue classes modulo $dq_\Siegel$ and use the triangle inequality to conclude that
$$ \left|\sum_{n \in [N_d]} F_{P'}(g_{P'}(d(q_\Siegel n + a_d))\Gamma)\right| \gg \delta^{O(1)} \frac{N}{D} $$
for some $1 \leq N_d \leq N/D$ and $1 \leq a_d \leq q_\Siegel$ (note that all $q_\Siegel$ factors can be absorbed into the $\delta^{O(1)}$ factor).  Applying Theorem~\ref{main-variant}, we can then find a horizontal character $\eta_d$ of $G'$ with 
\begin{equation}\label{etad}
 0 < |\eta_d| \ll \delta^{-\exp(m^{O(1)})}
\end{equation}
such that
$$ \| \eta_d \circ g_{P'}(d(q_\Siegel \cdot + a_d)) \|_{C^\infty[N/D]} \ll \delta^{-\exp(m^{O(1)})},$$
where the $\|\cdot\|_{C^{\infty}}$ is defined in~\cite[Definition 2.7]{gt-leibman}.  The parameter $a_d$ is annoying, but we can remove\footnote{We thank the anonymous referee for this suggestion, which patched a gap in a previous version of this argument.} it by applying \cite[Lemma 8.4]{gt-leibman} to conclude that
$$ \| \eta'_d \circ g_{P'}(d(q_\Siegel \cdot)) \|_{C^\infty[N/D]} \ll \delta^{-\exp(m^{O(1)})}$$
for some $\eta'_d$ that continues to obey \eqref{etad}.  The total number of such $\eta'_d$ is $O( \delta^{-\exp(m^{O(1)})})$.  Thus by the pigeonhole principle, we can find one such horizontal character $\eta$ such that
$$ \| \eta \circ g_{P'}(d(q_\Siegel \cdot)) \|_{C^\infty[N/D]} \ll \delta^{-\exp(m^{O(1)})}$$
for $\gg \delta^{\exp(m^{O(1)})} D$ values of $d \in [D,2D]$.  If we expand out the polynomial
\begin{equation}\label{etaq}
\eta \circ g_{P'}(q_\Siegel n) = \beta_k n^k + \dots + \beta_0 \hbox{ mod } 1
\end{equation}
for some real numbers $\beta_0,\dots,\beta_k$, then by applying~\cite[Lemma 3.2]{gt-mobius} we conclude that there is a positive integer $q=O(1)$ such that
$$ \| q d^j \beta_j \|_{\R/\Z} \ll (N/D)^{-j} \delta^{-\exp(m^{O(1)})}$$
for all $j=0,\dots,k$, where $\|x\|_{\R/\Z}$ denotes the distance to the nearest integer.  Applying a Waring-type result from~\cite[Lemma 3.3]{gt-mobius}, we then have for each $j=0,\dots,k$ that
$$ \| q d' \beta_j \|_{\R/\Z} \ll (N/D)^{-j} \delta^{-\exp(m^{O(1)})}$$
for $\gg \delta^{\exp(m^{O(1)})} D^j$ integers $d'$ of size $d'=O(D^j)$.  Applying Vinogradov's lemma~\cite[Lemma 3.4]{gt-mobius}, and clearing denominators, we then conclude that there is a positive integer $K \ll  \delta^{\exp(m^{O(1)})}$ such that
$$ \| K \beta_j \|_{\R/\Z} \ll N^{-j} \delta^{-\exp(m^{O(1)})}$$
for all $j=0,\dots,k$, and thus by \eqref{etaq} 
$$ \| K q_\Siegel^k \eta \circ g_{P'}\|_{C^\infty[N]} \ll \delta^{-\exp(m^{O(1)})}.$$
On the other hand, $g_{P'}$ is totally $1/M^{A / m^{O(1)}}$-equidistributed.
Arguing as in~\cite[\S 3]{gt-mobius} and noting that all exponents of the form $O_m(1)$ are in fact polynomial in $m$, these two facts are incompatible unless
\begin{equation}\label{party}
\delta^{-\exp(m^{O(1)})} \gg M^{A/m^{O(1)}}
\end{equation}
which (when combined with the constraint $\delta \leq \frac{1}{Mq_\Siegel}$) gives the desired bound \eqref{samp}.

For the Type II case, we can again start by assuming \eqref{sumnp} for some $0 < \delta < \frac{1}{M}$ and some Type II sum $f$.  The contribution of those $n$ less than $\delta^C N$ for a large absolute constant $C$ can easily be seen to be negligible, so one can assume without loss of generality that $|P'|$ lies in the interval $[\delta^C N, N]$.  One has
$$ \sum_{d > N^{1/3}} \sum_{w > N^{1/3}} a_d b_w F_{P'}(g_{P'}(dw) \Gamma) 1_{P'}(dw) \gg \delta^{O(1)} N$$
for some divisor-bounded $a_d, b_w$, and then after some dyadic decomposition and Cauchy--Schwarz (cf.,~\cite[Proposition 7.2]{gt-quadratic}) one can find $N^{1/3} \ll D, W \ll \delta^{-O(1)} N^{2/3}$ with $DW = \delta^{O(1)} N$ such that
$$ \sum_{d,d' \in [D,2D]} \sum_{w,w' \in [W,2W]} F_{P'}(g_{P'}(dw) \Gamma) \overline{F_{P'}}(g_{P'}(dw') \Gamma) \overline{F_{P'}}(g_{P'}(d'w) \Gamma) F_{P'}(g_{P'}(d'w') \Gamma) \gg \delta^{O(1)} N.$$
One now repeats the arguments used to treat the Type II case in~\cite[\S 3]{gt-mobius} more or less verbatim (noting that all exponents are of order $\exp(m^{O(1)})$ at worst) to obtain a contradiction to the total $1/M^{A / m^{O(1)}}$-equidistribution of $g_{P'}$ unless \eqref{party} holds, and we again obtain \eqref{samp} as desired.  This concludes the proof of Theorem~\ref{quant-ortho}.

\section{Applying densification} \label{sec: densification}

We now use densification methods to establish a general transference principle (which seems of independent interest) that converts inverse theorems for the Gowers norms for $1$-bounded functions to inverse theorems for Gowers norms for $\nu$-bounded functions for various ``pseudorandom'' weights $\nu$.  Our pseudorandomness condition will be relatively mild (a $U^{2k}$ estimate on $\nu-1$), and the losses in the transference argument will only be polynomial in nature.  However, one drawback of the theorem is that the input inverse theorem must also have polynomial bounds. 

In Subsection~\ref{subsec:mainproof}, we will use Theorem~\ref{transf} to complete the proof of Theorem~\ref{main-2} in the von Mangoldt case.

\subsection{Transferring inverse theorems}

\begin{theorem}[Transference principle for $U^k$ inverse theorems]\label{transf}  Let $k\geq 2$ be fixed. Let $G = (G,+)$ be a finite abelian group.  Suppose that for every $0 < \delta \leq 1/2$ there is a family $\Psi_\delta$ of $1$-bounded functions $\psi \colon G \to \C$, non-increasing in $\delta$ and closed under translations and complex conjugation, obeying the following $U^k$ inverse theorem:
\begin{itemize}
\item[(i)]  If $0 < \delta \leq 1/2$ and $f \colon G \to \C$ is $1$-bounded with $\|f\|_{U^k(G)} \geq \delta$, then there exists $\psi \in \Psi_\delta$ such that $|\E_{x \in G} f(x) \overline{\psi(x)}| \gg \delta^{B}$ for some $B>0$.
\end{itemize}
Let $C_0$ be sufficiently large depending on $k$, let $0 < \delta \leq 1/2$, and let $\nu \colon G \to \R^+$ be a weight with 
\begin{equation}\label{nu-pseud}
\| \nu - 1 \|_{U^{2k}(G)} \leq \delta^{C_0}.
\end{equation}
Let $f \colon G \to \C$ be $\nu$-bounded with 
\begin{equation}\label{fukg}
\|f\|_{U^k(G)} \geq \delta.
\end{equation}
Then there exists $\psi_1,\dots,\psi_{2^k-1} \in \Psi_{\delta^{O(1)}}$ such that
$$ \left|\E_{x \in G} f(x) \prod_{j=1}^{2^k-1} \overline{\psi_j}(x) \right| \gg \delta^{O(1)}.$$
\end{theorem}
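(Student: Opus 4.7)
The plan is to adapt the densification technique of Conlon, Fox, and Zhao \cite{cfz-rel} to the inverse theorem setting; the polynomial nature of the bound in hypothesis (i) is what propagates through the argument to yield the polynomial conclusion. Enumerate $\{0,1\}^k \setminus \{\vec 0\}$ as $\omega_1, \ldots, \omega_{2^k - 1}$, and set $T_j := \{0,1\}^k \setminus (\{\vec 0\} \cup \{\omega_1, \ldots, \omega_j\})$. I will prove by induction on $j \in \{0, 1, \ldots, 2^k-1\}$ that there exist $\psi_1, \ldots, \psi_j \in \Psi_{\delta^{O(1)}}$ with
$$J_j := \left|\E_{x, \vec h \in G^{k+1}} f(x) \prod_{i=1}^{j} \overline{\psi_i}(x + \omega_i \cdot \vec h) \prod_{\omega \in T_j} \mathcal{C}^{|\omega|} f(x + \omega \cdot \vec h)\right| \gg \delta^{D_j}$$
for some $D_j = O(1)$. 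The base case $j = 0$ is the hypothesis $\|f\|_{U^k(G)}^{2^k} \geq \delta^{2^k}$ from \eqref{fukg}, and the terminal case $j = 2^k - 1$ yields the desired conclusion upon pigeonholing over $\vec h$ to fix a single $\vec h_0 \in G^k$ and using the translation-closure of $\Psi$ to absorb the shifts $\omega_i \cdot \vec h_0$ into the $\psi_i$.

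For the inductive step, set $\omega^* := \omega_{j+1}$. A linear change of variables $y = x + \omega^* \cdot \vec h$ expresses both $J_j$ and $J_{j+1}$ in terms of a common partial dual function
$$\Phi_j(y) := \E_{\vec h} \prod_{\omega \neq \omega^*} g_\omega(y + (\omega - \omega^*) \cdot \vec h),$$
where $g_\omega$ is either $\mathcal{C}^{|\omega|} f$ or $\overline{\psi_i}$ depending on $\omega$, so that $J_j = |\E_y \mathcal{C}^{|\omega^*|} f(y) \Phi_j(y)|$ and $J_{j+1} = |\E_y \overline{\psi_{j+1}(y)} \Phi_j(y)|$. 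The task is then to produce $\psi_{j+1} \in \Psi_{\delta^{O(1)}}$ with $|\E_y \Phi_j(y) \overline{\psi_{j+1}(y)}| \gg \delta^{O(1)}$ via hypothesis (i), applied to a suitable $1$-bounded normalization of $\Phi_j$. To this end, I will first use the pseudorandomness $\|\nu - 1\|_{U^{2k}(G)} \leq \delta^{C_0}$ to establish the $L^2$ bound $\E_y |\Phi_j(y)|^2 \leq 1 + O(\delta^{cC_0})$: expanding the square as a doubled $\vec h, \vec h'$-average yields a $2(2^k-1)$-linear form to which the Gowers--Cauchy--Schwarz inequality \eqref{gcz} and a telescoping $\nu = 1 + (\nu-1)$ apply, giving an error of at most a power of $\delta^{C_0}$ per non-trivial term, negligible for $C_0$ chosen large in terms of $k$. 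A Markov-type truncation at scale $M := \delta^{-O(1)}$ then produces the $1$-bounded proxy $\tilde\Phi_j := M^{-1} \Phi_j \cdot 1_{|\Phi_j| \leq M}$ with $|\E_y f(y) \overline{\tilde\Phi_j(y)}| \gg \delta^{O(1)}$, the tail contribution being controlled by a weighted Cauchy--Schwarz using $|f| \leq \nu$ and the $L^2$ bound.

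The main obstacle is then obtaining the lower bound $\|\tilde\Phi_j\|_{U^k(G)} \gg \delta^{O(1)}$, which is required in order to apply hypothesis (i) to the $1$-bounded function $\tilde\Phi_j$. I expect this to follow by exploiting the Gowers-dual structure of $\Phi_j$: expanding $\|\tilde\Phi_j\|_{U^k(G)}^{2^k}$ yields a $2^k \cdot (2^k - 1)$-linear average arranged in a $U^{2k}$-like configuration in the constituent $\psi_i$- and $f$-factors, and a chain of Gowers--Cauchy--Schwarz inequalities with iterated telescoping of $\nu$ should reduce matters to the already-established lower bound on $J_j$ (this is precisely the step where the $U^{2k}$-pseudorandomness of $\nu$ is needed in its full strength). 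Once the Gowers norm lower bound is in place, applying hypothesis (i) with $\delta$ replaced by $\delta^{O(1)}$ produces $\psi_{j+1} \in \Psi_{\delta^{O(1)}}$ with $|\E_y \tilde\Phi_j(y) \overline{\psi_{j+1}(y)}| \gg \delta^{O(1)}$, and unwinding the truncation (using the $L^2$ bound a final time) completes the induction. A careful bookkeeping of the polynomial losses shows that at each of the $2^k - 1$ inductive steps one loses a polynomial power of $\delta$ (compounding the $\delta^B$ loss from hypothesis (i) with the truncation and Cauchy--Schwarz losses), yielding a polynomial bound in $\delta$ overall, provided $C_0$ is taken sufficiently large in terms of $k$ and $B$.
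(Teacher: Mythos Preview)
Your overall architecture — inductively replacing each of the $2^k-1$ non-zero vertices by an element of $\Psi_{\delta^{O(1)}}$ via a dual-function argument, then pigeonholing over $\vec h$ at the end — is exactly what the paper does (this is the iteration of Proposition~\ref{dense}).  The gap is in your handling of the step you yourself flag as the ``main obstacle'', namely the lower bound $\|\tilde\Phi_j\|_{U^k(G)} \gg \delta^{O(1)}$.

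Your stated plan there does not work.  You propose to expand $\|\tilde\Phi_j\|_{U^k}^{2^k}$ into a $2^k(2^k-1)$-linear average and then telescope $\nu$; but $\tilde\Phi_j = M^{-1}\Phi_j\,1_{|\Phi_j|\le M}$ carries the indicator $1_{|\Phi_j|\le M}$, which is highly nonlinear in the constituent $f$- and $\psi$-factors, so no clean multilinear form emerges to which Gowers--Cauchy--Schwarz or a $\nu\mapsto 1+(\nu-1)$ telescoping could be applied.  Relatedly, you only establish the \emph{upper} bound $\E|\Phi_j|^2 \le 1+O(\delta^{cC_0})$, whereas what the argument actually needs is the \emph{lower} bound $\E|\Phi_j|^2 \gg \delta^{O(1)}$.

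The paper fixes both issues at once.  From $|\E_y\, g_{\omega^*}(y)\Phi_j(y)|\gg\delta^{O(1)}$ with $|g_{\omega^*}|\le\nu$, weighted Cauchy--Schwarz gives $\E\,\nu|\Phi_j|^2\gg\delta^{O(1)}$, and the $U^{2k}$-pseudorandomness bounds $\E(\nu-1)|\Phi_j|^2$, so $\E|\Phi_j|^2\gg\delta^{O(1)}$.  One then truncates at level $1$ (not at an auxiliary $M$), setting $\tilde\Phi_j:=\min(|\Phi_j|,1)\,\mathrm{sgn}(\Phi_j)$; the crucial pointwise bound $|\Phi_j|\le\mathcal{D}\nu$ gives $|\Phi_j-\tilde\Phi_j|\le|\mathcal{D}\nu-1|$, and the second-moment estimate $\E(\mathcal{D}\nu)^2=1+O(\delta^{C_0})$ (which only needs $U^{2k}$-control) makes this small in $L^2$.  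Hence $|\E_y\,\overline{\Phi_j}(y)\tilde\Phi_j(y)|\gg\delta^{O(1)}$.  Now expand \emph{only} the factor $\overline{\Phi_j}$ as a $(2^k{-}1)$-linear form, placing the $1$-bounded $\tilde\Phi_j$ at the vertex $\omega^*$; Gowers--Cauchy--Schwarz then yields $\|\tilde\Phi_j\|_{U^k}\gg\delta^{O(1)}$ directly, since each remaining $g_\omega$ has $\|g_\omega\|_{U^k}\le\|\nu\|_{U^k}\ll 1$.  This ``expand one copy, keep the truncated function opaque'' trick is the missing idea in your sketch.
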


We remark that this theorem strengthens a similar result in~\cite{dodos-k}, in that the class $\Psi_\delta$ is allowed to be more general than the space of ``dual functions'', and the bounds are polynomial in nature rather than qualitative.

We now begin the proof of this theorem. Let the notation and hypotheses be as in Theorem~\ref{transf}. From \eqref{fukg} we have
\begin{equation}\label{fip}
 \left|\E_{(x,\vec h) \in G^{k+1}} \prod_{\omega \in \{0,1\}^k} f_\omega(x + \omega \cdot \vec h)\right| \gg \delta^{O(1)}
\end{equation}
where $f_0 = f$, and all the other $f_\omega \colon G \to \C$ are either equal to $f$ or its complex conjugate.  The key step is

\begin{proposition}[Densification of a single factor]\label{dense}  Suppose that the bound \eqref{fip} holds for some $\nu+1$-bounded functions $f_\omega, \omega \in \{0,1\}^k$.  Let $\omega_0 \in \{0,1\}^k$.  Then we have
$$ \left|\E_{(x,\vec h) \in G^{k+1}} \prod_{\omega \in \{0,1\}^k} \tilde f_\omega(x + \omega \cdot \vec h)\right| \gg \delta^{O(1)}$$
where $\tilde f_\omega = f_\omega$ for $\omega \in \{0,1\}^k \backslash \{\omega_0\}$, and $\tilde f_{\omega_0} \in \Psi_{\delta^{O(1)}}$.
\end{proposition}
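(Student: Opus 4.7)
The strategy is one step of the Conlon--Fox--Zhao densification method \cite{cfz-rel}, adapted to produce test functions in $\Psi$ rather than counting dual functions. By translating coordinates we may assume $\omega_0 = 0^k$, and rewrite the hypothesis \eqref{fip} as $|\E_y f_0(y) D(y)| \gg \delta^{O(1)}$, where
\[ D(y) := \E_{\vec h \in G^k} \prod_{\omega \in \{0,1\}^k \setminus \{0^k\}} f_\omega(y + \omega \cdot \vec h). \]
Applying the Gowers--Cauchy--Schwarz inequality \eqref{gcz} to this $2^k$-linear Gowers inner product gives $\prod_\omega \|f_\omega\|_{U^k(G)} \gg \delta^{O(1)}$. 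Using $|f_\omega| \le \nu + 1$, writing $\nu + 1 = 2 + (\nu - 1)$, and invoking the pseudorandomness $\|\nu - 1\|_{U^{2k}(G)} \le \delta^{C_0}$, an iterated Gowers--Cauchy--Schwarz argument yields the uniform upper bound $\|f_\omega\|_{U^k(G)} \le 2 + O(\delta^{cC_0})$, and hence $\|f_0\|_{U^k(G)} \gg \delta^{O(1)}$.

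The inverse theorem hypothesis (i) applies only to $1$-bounded functions, so we need to approximate $f_0$ by such a function. Via a Hahn--Banach / convex-duality argument (a polynomial dense model theorem in the CFZ style, which iteratively applies hypothesis (i) and exploits closure of $\Psi_\delta$ under translation and conjugation), there exists $g$ in the convex hull of $\Psi_{\delta^{O(1)}} \cup \{0\}$ with dual-norm error $\|f_0 - g\|_{(U^k)^*(G)} \ll \delta^{c'C_0}$. Since $\|D\|_{U^k(G)} \ll 1$ (again by pseudorandomness, using the same expansion of $\nu + 1$), this gives
\[ |\E_y g(y) D(y) - \E_y f_0(y) D(y)| \le \|f_0 - g\|_{(U^k)^*(G)} \|D\|_{U^k(G)} \ll \delta^{c'C_0}, \]
so for $C_0$ large enough we have $|\E_y g(y) D(y)| \gg \delta^{O(1)}$. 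Writing $g = \sum_i c_i \psi_i$ with $\psi_i \in \Psi_{\delta^{O(1)}} \cup \{0\}$, $c_i \ge 0$ and $\sum_i c_i = 1$, the triangle inequality and pigeonhole produce some $\psi \in \Psi_{\delta^{O(1)}}$ with $|\E_y \psi(y) D(y)| \gg \delta^{O(1)}$, which is the desired statement with $\tilde f_{\omega_0} := \psi$.

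The main obstacle is the polynomial-bounds form of the dense model step. Qualitative dense model theorems (as in Green--Tao--Ziegler) would produce a decomposition error $o(1)$ rather than $\delta^{cC_0}$, which is inadequate since the overall densification strategy must preserve polynomial-in-$\delta$ losses. The polynomial version is precisely the CFZ innovation, and it crucially relies both on the polynomial correlation output $\gg \delta^B$ in hypothesis (i) and on the full strength of the $U^{2k}$-pseudorandomness bound $\|\nu - 1\|_{U^{2k}(G)} \le \delta^{C_0}$, which is needed to control the many Gowers-inner-product averages involving $\nu + 1$ factors arising both in bounding $\|D\|_{U^k(G)}$ and $\|f_\omega\|_{U^k(G)}$ and in constructing $g$.
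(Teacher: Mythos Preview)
Your proposal has a genuine gap at the central step. You invoke a ``polynomial dense model theorem in the CFZ style'' to approximate $f_0$ by a convex combination $g$ of functions from $\Psi_{\delta^{O(1)}}$ with $\|f_0-g\|_{(U^k)^*}\ll\delta^{c'C_0}$. But this is not what CFZ do, and no such result is proven or cited. The point of the CFZ densification method is precisely to \emph{avoid} any dense model or Hahn--Banach step; the earlier transference proofs that \emph{did} go through a dense model theorem (Green--Tao, Gowers, RTTV) are exactly the ones that incurred non-polynomial losses. So attributing a polynomial dense model theorem to CFZ mischaracterises their contribution, and your argument leaves the hardest step as a black box. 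A second, smaller issue: your bound $\|D\|_{U^k}\ll 1$ is not obviously a consequence of $U^{2k}$-pseudorandomness of $\nu$; expanding $\|D\|_{U^k}^{2^k}$ does not produce a $U^{2k}$ Gowers inner product in any evident way.

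The paper's proof follows the actual CFZ idea, which works on the \emph{dual function} $F=D$ rather than on $f_0$. From $|\E f_0 F|\gg\delta^{O(1)}$ and $|f_0|\le\nu$, Cauchy--Schwarz gives $\E\nu|F|^2\gg\delta^{O(1)}$; one then shows $\E(\nu-1)|F|^2\ll\delta^{C_0}$ by recognising it as a $U^{2k}$ Gowers inner product, so $\E|F|^2\gg\delta^{O(1)}$. Now truncate $F$ to the $1$-bounded $\tilde F:=\min(|F|,1)\,\mathrm{sgn}(F)$; since $|F|\le\mathcal D\nu$ and $\E|\mathcal D\nu-1|^2\ll\delta^{C_0}$ (again from $U^{2k}$-pseudorandomness), the truncation error $F-\tilde F$ is small in $L^2$. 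Hence $|\E\overline F\,\tilde F|\gg\delta^{O(1)}$, which is itself a $U^k$ Gowers inner product and forces $\|\tilde F\|_{U^k}\gg\delta^{O(1)}$ by Gowers--Cauchy--Schwarz. Now apply hypothesis (i) directly to the $1$-bounded $\tilde F$ to obtain $\psi\in\Psi_{\delta^{O(1)}}$ with $|\E\tilde F\,\psi|\gg\delta^{O(1)}$, and transfer back to $F$ using the $L^2$-smallness of $F-\tilde F$. No dense model theorem, no Hahn--Banach, and every loss is polynomial.
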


Indeed, after applying this proposition $2^k-1$ times starting with \eqref{fip}, we conclude that
$$
 \left|\E_{(x,\vec h) \in G^{k+1}} f(x) \prod_{\omega \in \{0,1\}^k \backslash \{0\}^k} \psi_\omega(x + \omega \cdot \vec h)\right| \gg \delta^{O(1)}
$$
for some $\psi_\omega \in \Psi_{\delta^{O(1)}}$ for all $\omega \in \{0,1\}^k \backslash \{0\}^k$ (one can use the non-decreasing nature of $\Psi$ to make the implied constant in $O(1)$ uniform in $\omega$).  In particular, by the pigeonhole principle there exists $h_1,\dots,h_k \in G$ such that
$$
\left |\E_{x \in G} f(x) \prod_{\omega \in \{0,1\}^k \backslash \{0\}^k} \psi_\omega(x + \omega \cdot \vec h)\right| \gg \delta^{O(1)}
$$
giving Theorem~\ref{transf} thanks to the translation and conjugation invariance of $\Psi_{\delta^{O(1)}}$.  

It remains to prove Proposition~\ref{dense}.  By relabeling we may assume $\omega_0 = 0^k$.  By replacing $\nu$ with $\frac{\nu+1}{2}$ (and adjusting $C_0$ if necessary), and then rescaling by various factors of $2$, we may assume that the $f_\omega$ are $\nu$-bounded rather than $\nu+1$-bounded. Now we adapt the arguments of Conlon--Fox--Zhao~\cite{cfz-rel}. We have
$$ |\E_{x \in G} f_{0^k}(x) F(x)| \gg \delta^{O(1)}$$
where $F \colon G \to \C$ is the dual function
$$ F(x) \coloneqq \E_{\vec h \in G^k} \prod_{\omega \in \{0,1\}^k \backslash \{0\}^k} f_\omega(x + \omega \cdot \vec h).$$
Since $f_{0^k}$ is $\nu$-bounded, we conclude from Cauchy-Schwarz that
$$ (\E_{x \in G} \nu(x)) (\E_{x \in G} \nu(x) |F(x)|^2) \gg \delta^{O(1)}.$$
Since 
$$ \E_{x \in G} \nu(x) = \|\nu\|_{U^1(G)} \leq \|\nu\|_{U^k(G)} \leq 1 + \|\nu-1\|_{U^k(G)} \ll 1$$
we conclude that
\begin{equation}\label{e1}
 \E_{x \in G} \nu(x) |F(x)|^2 \gg \delta^{O(1)}.
\end{equation}
Next we claim that
\begin{equation}\label{exg}
 \E_{x \in G} (\nu-1)(x) |F(x)|^2 \ll \delta^{C_0}.
\end{equation}
We can write the left-hand side of \eqref{exg} as
$$ \E_{(x,\vec h) \in G^{2k+1}} \prod_{\omega \in \{0,1\}^{2k}} f_\omega(x + \omega \cdot \vec h)$$
where we have
\begin{align*}
f_{0^{2k}}(x) &\coloneqq \nu(x)-1 \\
f_{\vec \omega, 0^k}(x) &\coloneqq f_{\vec \omega}(x)\\
f_{0^k, \vec \omega}(x) &\coloneqq \overline{f}_{\vec \omega}(x)
\end{align*}
for $\vec  \omega \in \{0,1\}^k \backslash \{0\}^k$, and $f_\omega(x) \coloneqq 1$ for all other $\omega \in \{0,1\}^{2k}$ not covered by the preceding definitions.  By the Gowers--Cauchy--Schwarz inequality \eqref{gcz}, we thus have
$$
 \E_{x \in G} (\nu-1)(x) |F(x)|^2 \leq \prod_{\omega \in \{0,1\}^{2k}} \| f_\omega\|_{U^{2k}(G)}
\leq \| \nu-1\|_{U^{2k}(G)} \|\nu+1\|_{U^{2k}(G)}^{2^{2k}-1},$$
and the claim now follows from \eqref{nu-pseud} and the triangle inequality.

From \eqref{e1}, \eqref{exg} and the triangle inequality we conclude (for $C_0$ large enough) that
\begin{equation}\label{exf}
 \E_{x \in G} |F(x)|^2 \gg \delta^{O(1)}.
\end{equation}
The function $F$ is not quite bounded.  However, as the $f_\omega$ are all $\nu$-bounded, we certainly have the pointwise bound $|F| \leq {\mathcal D} \nu$, where ${\mathcal D} \nu$ is the dual function
$$ {\mathcal D} \nu(x) \coloneqq \E_{h \in G^k} \prod_{\omega \in \{0,1\}^k \backslash \{0\}^k} \nu(x + \omega \cdot \vec h).$$
We observe the moment estimates
\begin{equation}\label{mom}
 \E_{x \in G} {\mathcal D} \nu(x)^j = 1 + O( \delta^{C_0} )
\end{equation}
for $j=0,1,2$.  We just prove this for $j=2$, as the $j=0,1$ claims are similar (and easier).  We can expand
$$ \E_{x \in G} {\mathcal D} \nu(x)^2 = \E_{(x,\vec h) \in G^{2k+1}} \prod_{\omega \in \{0,1\}^{2k}} g_\omega(x + \omega \cdot \vec h)$$
where
\begin{align*}
 g_{\vec \omega, 0^k}(x) &\coloneqq \nu(x)\\
g_{0^k, \vec \omega}(x) &\coloneqq \nu(x)
\end{align*}
for $\vec \omega \in \{0,1\}^k \backslash \{0\}^k$, and $g_\omega(x) \coloneqq 1$ for all other $\omega \in \{0,1\}^{2k}$ not covered by the preceding definitions.  We split each $g_\omega$ that is of the form $\nu$ into $1$ and $\nu-1$.  Applying the triangle inequality \eqref{triangle} and the Gowers--Cauchy--Schwarz inequality \eqref{gcz}, we can thus write
$$ \E_{x \in G} {\mathcal D} \nu(x)^2 = 1 + O( \| \nu-1\|_{U^{2k}(G)} (1 + \| \nu-1\|_{U^{2k}(G)})^{2^{2k}-1} ),$$
and the claim follows from \eqref{nu-pseud}.  

From \eqref{mom} we have
\begin{equation}\label{lad}
 \E_{x \in G} |{\mathcal D} \nu(x)-1|^2 \ll \delta^{C_0}.
\end{equation}
Now define the truncated version
$$ \tilde F(x) \coloneqq \min(|F(x)|,1) \mathrm{sgn}(F(x)),$$
where $\mathrm{sgn}(F(x))$ is equal to $F(x)/|F(x)|$ when $F(x) \neq 0$ and equal to zero when $F(x) = 0$.  Then $\tilde F$ is $1$-bounded and
\begin{equation}\label{ftf}
|F(x) - \tilde F(x)| \leq \max(|F(x)|-1, 0) \leq |{\mathcal D} \nu(x) - 1|
\end{equation}
so from \eqref{lad} and Cauchy--Schwarz we have
$$ \E_{x \in G} \overline{F}(x) (F(x) - \tilde F(x)) \leq \E_{x \in G} |F(x)-\tilde F(x)|^2 + |F(x)-\tilde F(x)| \ll \delta^{C_0/2}.$$
Hence by \eqref{exf} and the triangle inequality we have
$$
 |\E_{x \in G} \overline{F}(x) \tilde F(x)| \gg \delta^{O(1)}.$$
We rewrite the left-hand side as
$$ |\E_{\vec h \in G^k} \prod_{\omega \in \{0,1\}^{k}} f^*_\omega(x + \omega \cdot \vec h)|$$
where
\begin{align*}
 f^*_{0^k}(x) &\coloneqq \tilde F(x) \\
 f^*_\omega(x) &\coloneqq \overline{f_\omega}(x)
\end{align*}
for $\omega \in \{0,1\}^k \backslash \{0\}^k$.  The $f^*_\omega$ all have $U^k(G)$ norm of at most $\| \nu \|_{U^k(G)} \ll 1$ thanks to \eqref{nu-pseud}, hence by the Gowers--Cauchy--Schwarz inequality \eqref{gcz} one has
$$ \| \tilde F \|_{U^k(G)} \gg \delta^{O(1)}.$$
Applying the hypothesis in Theorem~\ref{transf}(i), we conclude that there exists $\psi \in \Psi_{\delta^{O(1)}}$ such that
$$ |\E_{x \in G} \tilde F(x) \psi(x)| \gg \delta^{O(1)}.$$
On the other hand, from Cauchy--Schwarz we have
$$ \E_{x \in G} (F(x)-\tilde F(x)) \psi(x) \ll (\E_{x \in G} |F(x)-\tilde F(x)|^2)^{1/2} \ll \delta^{C_0/2}$$
thanks to \eqref{lad}, \eqref{ftf}.  Hence by the triangle inequality (for $C_0$ large enough) we have
$$ \E_{x \in G} F(x) \psi(x) \gg \delta^{O(1)}.$$
But this rearranges to give the conclusion of Proposition~\ref{dense}.  The proof of Theorem~\ref{transf} is now complete.

We now combine this theorem with Manners' inverse theorem to obtain

\begin{theorem}[Transferred inverse theorem]\label{transfinv} Let $0 < \delta < 1/2$, and let $\nu \colon [N] \to \C$ be such that
$$ \| \nu - 1 \|_{U^{2k}[N]} \leq \delta^{C_0}$$
for some constant $C_0$ that is sufficiently large depending on $k$.  Let $f \colon [N] \to \C$ be a $\nu$-bounded function such that
\begin{equation}\label{fund}
\|f\|_{U^k[N]} \geq \delta.
\end{equation}
Then there exist a (filtered) nilmanifold $G/\Gamma$ of degree $k-1$, dimension $O(\delta^{-O(1)})$, and complexity at most $\exp\exp(O(1/\delta^{O(1)}))$, a $1$-bounded Lipschitz function $F \colon G/\Gamma \to \C$ of Lipschitz constant at most $\exp\exp(O(1/\delta^{O(1)}))$, and a polynomial map $g \colon \Z \to G$, such that
$$ |\E_{n \in [N]} f(n) \overline{F}(g(n) \Gamma)| \gg \exp(-\exp(O(1/\delta^{O(1)}))).$$
\end{theorem}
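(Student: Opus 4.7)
The plan is to combine the transference principle of Theorem \ref{transf} with the polynomial-correlation form of Manners' inverse theorem. First I would embed the problem into a cyclic group: by Bertrand's postulate pick a prime $N'$ with $10N \le N' \le 20N$, identify $[N]$ with a subset of $\Z/N'\Z$, extend $f$ by zero, and extend $\nu$ by the constant $1$ outside $[N]$. Since $\nu-1$ is then supported in $[N]$, a brief computation comparing the $U^{2k}(\Z/N'\Z)$ and $U^{2k}[N]$ norms (via the standard identity $\|1_{[N]}\|_{\tilde U^k(\Z)} \asymp N^{(k+1)/2^k}$) gives $\|\nu-1\|_{U^{2k}(\Z/N'\Z)} \ll \delta^{C_0}$, and similarly $\|f\|_{U^k(\Z/N'\Z)} \gg \delta$.

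Next, I would set up the family $\Psi_\delta$ required by Theorem \ref{transf}. With a suitable large constant $A = A(k)$, take $\Psi_\delta$ to be the collection of all functions $\psi \colon \Z/N'\Z \to \C$ arising as restrictions of nilsequences $n \mapsto F(g(n)\Gamma)$, where $G/\Gamma$ is a filtered nilmanifold of degree $k-1$, dimension at most $\delta^{-A}$, and complexity at most $\exp\exp(\delta^{-A})$; $F$ is $1$-bounded with Lipschitz constant at most $\exp\exp(\delta^{-A})$; and $g \colon \Z \to G$ is polynomial. This family is monotone in $\delta$, translation-invariant (polynomial maps are closed under argument translation), and conjugation-invariant (replace $F$ by $\overline F$). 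The crucial hypothesis \textit{(i)} of Theorem \ref{transf} -- that any $1$-bounded $f$ with $\|f\|_{U^k(\Z/N'\Z)} \geq \delta$ correlates with some $\psi \in \Psi_\delta$ to order $\delta^B$ -- is precisely the polynomial correlation statement extracted by the argument of \cite[\S 5]{manners}, as remarked in the introduction. Note that Theorem \ref{manners-inv} itself is insufficient here since it produces only a double-exponential correlation; the polynomial version is the intermediate step in Manners' proof, before iteration collapses many nilsequences into one.

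Invoking Theorem \ref{transf}, I then obtain $\psi_1,\dots,\psi_{2^k-1} \in \Psi_{\delta^{O(1)}}$ with
\[
\Bigl|\E_{n \in \Z/N'\Z} f(n) \prod_{j=1}^{2^k-1} \overline{\psi_j}(n)\Bigr| \gg \delta^{O(1)}.
\]
Writing $\psi_j(n) = F_j(g_j(n)\Gamma_j)$ on a nilmanifold $G_j/\Gamma_j$, I would consolidate the product into a single nilsequence on the product nilmanifold $\tilde G/\tilde\Gamma \coloneqq (G_1\times\cdots\times G_{2^k-1})/(\Gamma_1\times\cdots\times\Gamma_{2^k-1})$, equipped with the product filtration $\tilde G_i = \prod_j (G_j)_i$ and the concatenated Mal'cev basis, taking $\tilde g(n) = (g_1(n),\dots,g_{2^k-1}(n))$ and $\tilde F(x_1,\dots,x_{2^k-1}) = \prod_j \overline{F_j}(x_j)$. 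Since $k$ and hence $2^k-1$ is fixed, $\tilde G/\tilde\Gamma$ has degree $k-1$, dimension $O(\delta^{-A})$, complexity at most $\exp\exp(O(\delta^{-A}))$, and $\tilde F$ is $1$-bounded with Lipschitz constant at most $\exp\exp(O(\delta^{-A}))$ (the Lipschitz constant of a product of $1$-bounded Lipschitz functions being controlled by the sum of the factor Lipschitz constants). Restricting the average back from $\Z/N'\Z$ to $[N]$ via $f\cdot 1_{[N]} = f$ and weakening the bound $\delta^{O(1)}$ to $\exp\exp(-O(1/\delta^{O(1)}))$ yields the conclusion.

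The main technical obstacle is simply having the polynomial-correlation inverse theorem from \cite[\S 5]{manners} available as an input, since the statement of Theorem \ref{manners-inv} as quoted is too weak to feed directly into Theorem \ref{transf}. Granted this input, the remaining steps -- verification of the hypotheses of Theorem \ref{transf}, one application of the transference principle, and standard bookkeeping for the product nilmanifold -- are routine and preserve the relevant quantitative bounds up to bounded multiplicative losses in the exponents.
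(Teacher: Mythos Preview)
Your overall strategy matches the paper's, but there is a genuine gap in your choice of $\Psi_\delta$. You take $\Psi_\delta$ to consist of \emph{single} nilsequences $n \mapsto F(g(n)\Gamma)$ and assert that hypothesis (i) of Theorem \ref{transf}---correlation $\gg \delta^B$ with some $\psi \in \Psi_\delta$---follows from \cite[\S 5]{manners}. But the polynomial-correlation step in Manners' work (specifically \cite[Lemmas 5.4.1, 5.5.1]{manners}, applied inductively) only yields polynomial correlation with a $1$-bounded function of the form $\psi(n) = \sum_{i=1}^T \alpha_i \overline{F}_i(g_i(n)\Gamma_i)$, where the number of terms $T$ may be as large as $\exp\exp(\delta^{-O(1)})$. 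Pigeonholing to a single summand costs a factor of $T$ and drops the correlation back to double-exponential; this is precisely the loss that turns the intermediate step into Theorem \ref{manners-inv} itself. So with your $\Psi_\delta$, hypothesis (i) of Theorem \ref{transf} is not available with a fixed exponent $B$, and the transference principle does not apply.

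The repair, which is what the paper does, is to take $\Psi_\delta$ to be this class of $1$-bounded \emph{sums} of nilsequences. This class is still monotone in $\delta$ and closed under translation and conjugation, and now hypothesis (i) holds with polynomial $B$. After Theorem \ref{transf} you obtain $\bigl|\E_n f(n) \prod_{j=1}^{2^k-1} \overline{\psi_j}(n)\bigr| \gg \delta^{O(1)}$ with each $\psi_j$ such a sum; expanding the product gives at most $T^{2^k-1} \ll \exp\exp(O(\delta^{-O(1)}))$ monomial terms, and pigeonhole then yields correlation $\gg \exp\exp(-O(\delta^{-O(1)}))$ with a product of $2^k-1$ individual nilsequences. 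From there your consolidation on the product nilmanifold goes through unchanged.
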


\begin{proof}  As in the proof of Theorem~\ref{manners-inv}, we pick a prime $N'$ with $10N \leq N' \leq 20N$ and extend $f$ by zero to $\Z/N'\Z$; we also extend $\nu$ by $1$ to $\Z/N'\Z$, and observe that $\| \nu - 1 \|_{U^{2k}(\Z/N'\Z)} \ll \delta^{C_0}$.

To apply Theorem~\ref{transf}, we will need an inverse theorem that has polynomial correlation bounds.  This is not directly provided by Theorem~\ref{manners-inv}; however, such an inverse theorem does appear in the work of Manners~\cite{manners}.  Indeed, we see from~\cite[Lemmas 5.4.1, 5.5.1]{manners} (applying~\cite[Lemma 5.5.1]{manners} inductively, as in~\cite[p. 102]{manners}), that if $f \colon \Z/N'\Z \to \C$ is $1$-bounded with $ \|f\|_{U^k(\Z/N'\Z)} \geq \delta$, then there exists a $1$-bounded function $\psi \colon \Z/N'\Z \to \C$ with the polynomial correlation bound
$$ |\E_{n \in \Z/N'\Z} f(n) \overline{\psi(n)}| \gg \delta^{O(1)}$$
such that $\psi$ is of the form
$$ \psi(n) = \sum_{i=1}^T \alpha_i \overline{F}_i(g_i(n) \Gamma_i) $$
with $T \ll \exp(\exp(\delta^{-O(1)}))$, the $\alpha_i$ complex numbers with $|\alpha_i| \leq 1$, and for each $i$, $G_i/\Gamma_i$ is a filtered nilmanifold of degree $k-1$, dimension $O(\delta^{-O(1)})$, and complexity at most $\exp\exp(O(1/\delta^{O(1)}))$, $F_i \colon G_i/\Gamma_i \to \C$ is a $1$-bounded Lipschitz function $F: G/\Gamma \to \C$ of Lipschitz constant at most $\exp\exp(O(1/\delta^{O(1)}))$, and $g_i \colon \Z \to G_i$ is a polynomial map with $g_i \Gamma$ periodic with period $N'$.  Let us call the collection of all such $\psi$ (with appropriate choices of implied constants) ${\mathcal F}_\delta$; note that this collection is invariant under translation and complex conjugation.  We may now apply Theorem~\ref{transf} to the $\nu$-bounded function $f$ in the hypotheses of this theorem, and conclude that there exist $\psi_1,\dots,\psi_{2^k-1} \in {\mathcal F}_{\delta^{O(1)}}$ such that
$$\left|\E_{x \in \Z/N'\Z} f(x) \prod_{j=1}^{2^k-1} \overline{\psi_j}(x) \right| \gg \delta^{O(1)}.$$
Applying the pigeonhole principle, and taking the tensor product of various nilsequences, we conclude a correlation
$$ |\E_{n \in \Z/N'\Z} f(n) \overline{F}(g(n) \Gamma)| \gg \exp(-\exp(\delta^{-O(1)}))$$
where $G/\Gamma$ is a filtered nilmanifold of degree $k-1$, dimension $O(\delta^{-O(1)})$, and complexity at most $\exp\exp(O(1/\delta^{O(1)}))$, $F \colon G/\Gamma \to \C$ is a $1$-bounded Lipschitz function $F: G/\Gamma \to \C$ of Lipschitz constant at most $\exp\exp(O(1/\delta^{O(1)}))$, and $g \colon \Z \to G$ is a polynomial map with $g \Gamma$ periodic of period $N'$.  Now argue as in the proof of Theorem~\ref{manners-inv} to conclude.
\end{proof}

\subsection{Completing the proof of the main theorem}\label{subsec:mainproof}

Now we can show how the bound \eqref{lasi} in Theorem~\ref{main-2} follows from the bound \eqref{mmus-2} given by Theorem~\ref{quant-ortho}. This will complete the proof of Theorem~\ref{main-2} and hence that of Theorem~\ref{main}.  We begin with an application of the ``$W$-trick''.  Let $W \coloneqq P( \log^\eps N)$, where $\eps > 0$ is a small constant depending on $k$ to be chosen later; we may assume that $N$ is sufficiently large depending on $\eps$.  Observe that the set $\{ n \in [N]: (n,W)=1\}$ contains the entire support of $\Lambda_{\Siegel}$, as well as the support of $\Lambda$ except for $O(\log^{O(1)} N)$ numbers which give a negligible contribution to the $U^k[N]$ norm. Thus it will suffice to show the doubly logarithmic decay bound
$$
 \| (\Lambda - \Lambda_{\Siegel}) 1_{(\cdot,W)=1} \|_{U^k[N]} \ll (\log\log N)^{-c}.$$
By Corollary~\ref{slice}, this will follow once we show that
\begin{align}\label{lambda-siegel} \left\| \frac{\phi(W)}{W} (\Lambda - \Lambda_{\Siegel})(W\cdot+b) \right\|_{U^k[\frac{N-b}{W}]} \ll (\log\log N)^{-c}
\end{align}
for all $1 \leq b \leq W$ coprime to $W$.

Fix $b$.  Now we use a quantitative variant of the well known fact (see~\cite{gt-longaps}) that $\frac{\phi(W)}{W} \Lambda - 1$ can be bounded by a pseudorandom weight, but now observing that we can attain logarithmic accuracy in the pseudorandomness bound.

\begin{proposition}\label{tickle}  $\frac{\phi(W)}{W} (\Lambda - \Lambda_{\Siegel})(W\cdot+b)$ is $C\nu$-bounded for some $C = O(1)$ depending only on $k$ and some $\nu \colon [\frac{N-b}{W}] \to \R^+$ with $\| \nu - 1 \|_{U^{2k}[\frac{N-b}{W}]} \ll \log^{-c\eps} N$.
\end{proposition}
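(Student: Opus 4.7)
The plan is to construct $\nu$ as a sum of two ingredients: a Goldston--Y{\i}ld{\i}r{\i}m-type enveloping sieve for $\Lambda$, together with the Cram\'er model $\Lambda_{\Cramer, Q}$ itself to dominate the Siegel correction. Let $\eta \coloneqq c'\eps$ for a sufficiently small $c' = c'(k) > 0$, set $R \coloneqq N^\eta$, and fix a smooth cutoff $\chi \colon \R \to [0,1]$ with $\chi(0) = 1$ supported in $[-1,1]$. Define
$$
\nu_0(n) \coloneqq \frac{\phi(W)}{W}(\log R) \left(\sum_{d \mid Wn+b} \mu(d)\, \chi(\log d/\log R)\right)^2,
$$
and take
$$
\nu(n) \coloneqq \tfrac12 \nu_0(n) + \tfrac12 \frac{\phi(W)}{W} \Lambda_{\Cramer, Q}(Wn+b).
$$

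For the pointwise domination $|\tfrac{\phi(W)}{W}(\Lambda - \Lambda_\Siegel)(Wn+b)| \leq C\nu(n)$, bound $|\Lambda - \Lambda_\Siegel| \leq \Lambda + \Lambda_\Siegel$ (both non-negative) and treat the pieces separately. For $\Lambda$: when $Wn+b$ is prime and exceeds $R$, the inner divisor sum in $\nu_0$ collapses to $\chi(0) = 1$, yielding $\nu_0(n) = \frac{\phi(W)}{W}\log R$, and hence $\frac{\phi(W)}{W}\Lambda(Wn+b) \leq \frac{\phi(W)}{W}\log N = (1/\eta)\nu_0(n) \leq (2/\eta)\nu(n)$; the $O_\eta(1)$ exceptional values of $n$ with $Wn+b \leq R$ are absorbed into a small additive correction term tacked onto $\nu$. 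For $\Lambda_\Siegel$: the pointwise bound $|\Lambda_\Siegel(m)| \leq 2\Lambda_{\Cramer,Q}(m)$, immediate from Definition \ref{siegel-model} and Lemma \ref{alpha-bound} (using $|1 - m^{\beta-1}\chi_\Siegel(m)| \leq 2$), directly gives $\frac{\phi(W)}{W}|\Lambda_\Siegel(Wn+b)| \leq 4\nu(n)$.

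For the pseudorandomness bound $\|\nu - 1\|_{U^{2k}([\frac{N-b}{W}])} \ll \log^{-c\eps} N$, split by the triangle inequality \eqref{triangle}. The Cram\'er piece $\|\frac{\phi(W)}{W}\Lambda_{\Cramer, Q}(W\cdot+b) - 1\|_{U^{2k}([\frac{N-b}{W}])}$ is already bounded by Corollary \ref{cram} by $\ll w^{-c} = \log^{-c\eps} N$. For the Goldston--Y{\i}ld{\i}r{\i}m piece $\|\nu_0 - 1\|_{U^{2k}([\frac{N-b}{W}])}$, expand the norm via Definition \ref{gowers-norm} and use the alternating structure of the $2^{2k}$-term expansion to reduce matters to showing
$$
\E_{(n, \vec h)} \prod_{\omega \in S} \nu_0(n + \omega \cdot \vec h) = 1 + O(\log^{-c\eps} N)
$$
uniformly for each $S \subseteq \{0,1\}^{2k}$. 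Substituting the definition of $\nu_0$ and expanding the squared divisor sums converts each expectation into a sieve-theoretic sum over divisor tuples $\{(d_\omega, d'_\omega)\}_{\omega \in S}$ weighted by $\mu$- and $\chi$-factors, times the density of $(n, \vec h)$ satisfying the divisibility constraints $[d_\omega, d'_\omega] \mid W(n + \omega \cdot \vec h) + b$. A volume-packing argument in the style of Proposition \ref{lineq}, using the pairwise linear independence of the forms $\{n + \omega \cdot \vec h\}_\omega$ for distinct $\omega$, factorizes this density as $\prod_p \alpha_p$ with polynomial error in $R/N$; each $\alpha_p = 1 + O(1/p^2)$ for $p \geq w$ after the $W$-trick, producing $\prod_{p \geq w}\alpha_p = 1 + O(1/w) = 1 + O(\log^{-\eps} N)$. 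A standard Mellin-transform (or contour-integration) analysis of the $\mu$-weighted divisor sum normalizes the main term to exactly $1$, which then cancels in the alternating sum.

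The main technical obstacle is producing the quantitative Green--Tao linear forms condition for $\nu_0$ with error rate $O(\log^{-c\eps} N)$, since this requires tracking error terms through a sieve computation with up to $2 \cdot 2^{2k}$ divisor variables. This is tractable here because the dimension of the linear configurations is bounded in terms of $k$, the $W$-trick eliminates all anomalies at primes $p < w = \log^\eps N$, and the sieve errors (analogous to those in Proposition \ref{lineq}) are polynomial in $R/N = N^{\eta-1}$, comfortably below the logarithmic target $\log^{-c\eps} N$.
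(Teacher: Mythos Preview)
Your approach matches the paper's: both dominate $\tfrac{\phi(W)}{W}\Lambda_\Siegel(W\cdot+b)$ by the Cram\'er model (Corollary~\ref{cram}) and $\tfrac{\phi(W)}{W}\Lambda(W\cdot+b)$ by a Goldston--Y{\i}ld{\i}r{\i}m sieve whose $U^{2k}$ pseudorandomness comes from the linear-forms calculation in \cite[Appendix~D]{gt-linear}, then combine the two via the triangle inequality for Gowers norms. Three small points to tighten. First, the paper sets the sieve level $R = N^\gamma$ with $\gamma$ depending only on $k$ and \emph{independent of $\eps$}, so that the pointwise constant $C = O(1/\gamma)$ depends only on $k$ as stated; your choice $R = N^{c'\eps}$ gives $C = O(1/\eps)$, which is harmless downstream (since $\eps$ is eventually fixed in terms of $k$) but does not match the proposition as written. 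Second, taking $\chi(0)=1$ does not make the main term of the linear-forms expansion equal to~$1$: the main term picks up a factor $c_{\chi,2}^{|S|}$ with $c_{\chi,2} = \int_0^\infty |\chi'|^2$, and the paper arranges $c_{\chi,2}=1$ by choosing $\chi \equiv \tfrac12$ on $[-1,1]$, supported on $[-2,2]$, with $\int_1^2 |\chi'|^2 = 1$; you either need the analogous normalization or must divide $\nu_0$ by $c_{\chi,2}$. Third, the number of $n$ with $Wn+b$ prime and at most $R$ is $O(R/W)$, not $O_\eta(1)$; the paper avoids this by replacing $\Lambda$ with its restriction $\Lambda'$ to primes exceeding $R^2$ and checking the difference has $U^{2k}$ norm $O(N^{-c})$. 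The paper also addresses one subtlety you gloss over: applying \cite[Theorem~D.3]{gt-linear} directly introduces an error factor $e^{O(X)}$ with $X = \sum_{p<w} p^{-1/2}$, which is too large; it is removed by observing that for the forms $W(x+\omega\cdot\vec y)$ the exceptional primes $p\mid W$ have $\alpha(p,B)\equiv 1$ in the notation of that appendix, so the sharper local estimate $E_{p,\xi} = (1+O(1/p^2))E'_{p,\xi}$ holds there as well.
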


\begin{proof}  By the triangle inequality \eqref{triangle}, it suffices to establish this claim for $\frac{\phi(W)}{W} \Lambda(W\cdot+b)$ and
$\frac{\phi(W)}{W} \Lambda_\Siegel(W\cdot+b)$ separately.  In the latter case, we see from Definition~\ref{siegel-model} that
$$ \left|\frac{\phi(W)}{W} \Lambda_\Siegel(Wn+b)\right| \leq \frac{\phi(W)}{W} \Lambda_{\Cramer,Q}(Wn+b)$$
and the claim in this case follows from Corollary~\ref{cram}.

Now we turn to $\frac{\phi(W)}{W} \Lambda(W\cdot+b)$.  Here we can basically follow the analysis of Goldston--Y{\i}ld{\i}r{\i}m correlation estimates from~\cite[Appendix D]{gt-linear}, though with a slightly more careful accounting in order to obtain suitable estimates.  We choose a smooth function $\chi \colon \R \to \R_{\geq 0}$ supported on $[-2,2]$ that equals $1/2$ on $[-1,1]$ with $\int_1^2 \chi'(x)^2\, dx = 1$.  We set $R \coloneqq N^\gamma$ for some sufficiently small constant $0 < \gamma < 1/2$ depending only on $k$ (and independent of $\eps$).  Following~\cite[Appendix D]{gt-linear}, we introduce the truncated divisor sum
$$ \Lambda_{\chi,R,2}(n) \coloneqq \log R \left( \sum_{d|n} \mu(d) \chi\left(\frac{\log d}{\log R}\right) \right)^2.$$
From~\cite[Lemma D.2]{gt-linear} and the choice of $\chi$, the sieve factor $c_{\chi,2} = \int_0^\infty |\chi'(x)|^2\ dx$ associated to this divisor sum via~\cite[Definition D.1]{gt-linear} is simply
\begin{equation}\label{ch2}
c_{\chi,2} = 1.
\end{equation}  
We then set 
\begin{align}\label{nulambda}
\nu(n) \coloneqq \frac{\phi(W)}{W} \Lambda_{\chi,R,2}(Wn+b).
\end{align}
Let $\Lambda'$ be the restriction of $\Lambda$ to those primes greater than $R^2$.  It is not difficult to see that the error $\frac{\phi(W)}{W} \Lambda(W\cdot+b) - \frac{\phi(W)}{W} \Lambda'(W\cdot+b)$ (supported on primes up to $R^2$, as well as powers of primes, and bounded in size by $O(\log N)$) is non-negative with $U^{2k}[\frac{N-b}{W}]$ norm as small as $O(N^{-c})$, so by \eqref{triangle} we may freely replace $\frac{\phi(W)}{W} \Lambda(W\cdot+b)$ with $\frac{\phi(W)}{W} \Lambda'(W\cdot+b)$. By the definition of $\nu$ in \eqref{nulambda} and  the fact that $\chi(0)=1/2$, we easily verify the pointwise bound
$$ 0 \leq \frac{\phi(W)}{W} \Lambda'(Wn+b) \ll_\gamma \nu(n)$$
for all $n$.  It will thus suffice to show the logarithmic decay bound
$$ \| \nu - 1 \|_{U^{2k}[\frac{N-b}{W}]}^{2^{2k}} \ll \log^{-c\eps} N.$$
Expanding out the left-hand side, it suffices to show that
\begin{equation}\label{enho}
 \E_{(n,\vec h) \in \Omega} \prod_{\omega \in S} \frac{\phi(W)}{W} \Lambda_{\chi,R,2}(W(n+\omega \cdot \vec h)+b)
= \mathrm{vol}(\Omega) + O( (N/W)^{2k+1} \log^{-c\eps} N )
\end{equation}
for all subsets $S$ of $\{0,1\}^{2k}$, where $\Omega \subset \R^{2k+1}$ is the convex body
$$ \Omega \coloneqq \{ (x, \vec y) \in \R^{2k+1} \colon 0 < W(x+\omega \cdot \vec y)+b \leq N\,\, \forall \omega \in \{0,1\}^{2k} \}.$$
Suppose that we directly apply the estimate\footnote{This theorem as stated requires $\gamma$ to be sufficiently small depending on $W$ (represented in \cite{gt-linear} by the parameter $L$), but the bound $R \leq N^\gamma$ is only used before \cite[(D.4)]{gt-linear} to show that an expression of the form $O( L^{O(1)} R^{O(1)} N^{d-1} \log^t R )$ (here we have made the dependence on $L$ explicit) is equal to $o(N^d)$, and this can be achieved with $R \leq N^\gamma$ and $\gamma$ independent of $L$, so long as we also have $L \leq N^\gamma$, which is also the case here since $L = O(W)$ and $N$ is assumed to be sufficiently large.} in~\cite[Theorem D.3]{gt-linear}, using \eqref{ch2} to eliminate the role of the sieve factors.  Then we can express the left-hand side of \eqref{enho} as
\begin{equation}\label{lop}
 \left(\frac{\phi(W)}{W}\right)^{\# S} \left(\mathrm{vol}(\Omega) \prod_p \beta_p + O\left( \frac{(N/W)^{2k+1}}{\log^{1/20} R} e^{O(X)} \right)\right)
\end{equation}
where $\beta_p$ are the usual local factors
$$ \beta_p \coloneqq \E_{(n,\vec h) \in (\Z/p\Z)^{2k+1}} \prod_{\omega \in S} \frac{p}{p-1} 1_{W(n+\omega \cdot \vec h)+b \neq 0},$$
$X$ is the quantity
$$ X \coloneqq \sum_{p \in P} p^{-1/2}$$
and $P$ is the set of primes $p$ which are ``exceptional'' in the sense that at least two of the affine forms 
\begin{equation}\label{linear-form}
(x,\vec y) \mapsto W(x+\omega \cdot \vec y) + b
\end{equation}
for $\vec \omega \in \{0,1\}^{2k}$ are linearly dependent modulo $p$.

Since $W = P(\log^\eps N)$, one has $\beta_p=(\frac{p}{p-1})^{\# S}$ for $p < \log^\eps N$, while from the inclusion-exclusion calculation used in the proof of Proposition~\ref{lineq} one has $\beta_p = 1 + O(1/p^2)$ for $p \geq \log^\eps N$.  Thus
\begin{equation}\label{pbp-0}
 \prod_p \beta_p =  \left(\frac{W}{\phi(W)}\right)^{\# S} (1 + O( \log^{-\eps} N ) ).
\end{equation}
Since $\mathrm{vol}(\Omega) \ll (N/W)^{2k+1}$, the main term in \eqref{lop} is acceptable.  If it were not for the $e^{O(X)}$ term, the error term in \eqref{lop} would similarly be acceptable; unfortunately, as defined in~\cite[Appendix D]{gt-linear}, the exceptional primes consist precisely of all the primes $p$ up to $\log^\eps X$, and this would ostensibly lead to an unacceptably large error term in \eqref{lop}.  But, an inspection of the proof of \cite[Proposition D.4]{gt-linear} reveals that the $e^{O(X)}$ loss arises from three sources. One is from the crude bound
\begin{equation}\label{pbp}
 \prod_p \beta_p \leq e^{O(X)}
\end{equation}
(see \cite[(D.14)]{gt-linear}); one is from the variant
\begin{equation}\label{pbp-alt}
 \prod_{p > \log^{1/10} R} \beta_p \leq 1 + O(  e^{O(X)} \log^{-1/20} R )
\end{equation}
(see \cite[equation after (D.15)]{gt-linear}); and the third arises from the estimate
\begin{equation}\label{pbp-2}
\sum_{p \in P_\Psi: p > \log^{1/10} R} p^{-1} = O(X \log^{-1/20} R)
\end{equation}
appearing in the fourth display after~\cite[(D.16)]{gt-linear}.
 Of course, for the first  estimate \eqref{pbp} we may use the superior bound \eqref{pbp-0} instead in our case.  In our cases none of the exceptional primes exceed $\log^{\eps} N < \log^{1/10} R$, and so one can replace $X$ with $0$ in \eqref{pbp-alt}, \eqref{pbp-2}.  As a consequence of these observations, the $e^{O(X)}$ factor in \cite[Proposition D.4]{gt-linear} may be replaced with $\left(\frac{W}{\phi(W)}\right)^{\# S}$, and the error term in \eqref{lop} is now also acceptable, giving the claim.
\end{proof}

\begin{proof}[Proof of Theorem~\ref{main-2} for $\Lambda$]
Combining Proposition~\ref{tickle} with (the contrapositive of) Theorem~\ref{transfinv}, we see that it suffices to show (for a sufficiently small constant $c_1>0$) that  one has the pseudopolynomial bound
\begin{equation}\label{ewn}
 \E_{n \in [\frac{N-b}{W}]} \frac{\phi(W)}{W} (\Lambda - \Lambda_{\Siegel})(Wn+b) \overline{F}(g(n) \Gamma) \ll \exp(-c\log^c N)
\end{equation}
whenever $G/\Gamma$ is a (filtered) nilmanifold $G/\Gamma$ of degree $k-1$, dimension at most $(\log\log N)^{c_1}$ and complexity at most $\exp(\log^{c_1} N)$, $F \colon G/\Gamma \to \C$ is a $1$-bounded Lipschitz function of Lipschitz constant at most $\exp(\log^{c_1} N)$, and $g \colon \Z \to G$ is a polynomial map.  Using~\cite[Lemma 4.2]{MRTTZ}, we can write $g(n) = \tilde g(Wn+b)$ for another polynomial map $\tilde g \colon \Z \to G$.  But from Theorem~\ref{quant-ortho} we have
$$ \sum_{n \leq N: n = b\ (W)} (\Lambda - \Lambda_{\Siegel})(n) \overline{F}(\tilde g(n) \Gamma) \ll N \exp(-c\log^c N)$$
for some $c>0$ independent of $\eps$, and the claim \eqref{ewn} then follows for $\eps$ small enough.  This (finally!) completes the proof of Theorem~\ref{main-2}, and hence that of Theorem~\ref{main}.
\end{proof}

We can now quickly deduce Corollary~\ref{cor:Wtrick} from our main theorem.

\begin{proof}[Proof of Corollary~\ref{cor:Wtrick}] Let $w=(\log \log N)^{1/2}$. By Theorem~\ref{siegel-uniform}, we have
\begin{align}\label{siegel-cramer}
\|\Lambda_{\Siegel}-\Lambda_{\Cramer,Q}\|_{U^k[N]}&\ll \log^{-c} N.
\end{align}
Using the Fourier expansion $1_{n\equiv b\pmod W}=\frac{1}{W}\sum_{1\leq a\leq W}e\left(\frac{a(n-b)}{W}\right)$, the triangle inequality for the Gowers norms, and the fact that $\|fe(\xi\cdot)\|_{U^k[N]}=\|f\|_{U^k[N]}$ for any function $f$ and any $\xi \in \mathbb{R}$, we deduce from \eqref{siegel-cramer} that
\begin{align}\label{siegel-cramer2}
\left\|\frac{\phi(W)}{W}\left(\Lambda_{\Siegel}(W\cdot+b)-\Lambda_{\Cramer,Q}(W\cdot+b)\right)\right\|_{U^k[\frac{N-b}{W}]}&\ll W^{(k+1)/2^k}\log^{-c} N\ll \log^{-c} N.
\end{align}
From Proposition~\ref{cram}, we have
\begin{align}\label{lambda-cramer}
\left\|\frac{\phi(W)}{W}\Lambda_{\Cramer,Q}(W\cdot+b)-1\right\|_{U^k[\frac{N-b}{W}]}\ll w^{-c}.
\end{align}
Let $w'=\log^{\varepsilon} N$ where $\varepsilon$ is as in Subsection~\ref{subsec:mainproof}. Also let $W'=\prod_{p\leq w'}p$. Then by Corollary~\ref{slice} and \eqref{lambda-siegel} we have
\begin{align*}
&\max_{(b,W)=1}\left\| \frac{\phi(W)}{W} (\Lambda - \Lambda_{\Siegel})(W\cdot+b) \right\|_{U^k[\frac{N-b}{W}]}\ll \max_{(b',W')=1} \left\| \frac{\phi(W')}{W'} (\Lambda - \Lambda_{\Siegel})(W'\cdot+b') \right\|_{U^k[\frac{N-b'}{W'}]}\\
\ll& (\log \log N)^{-c}.   
\end{align*}
Now the claim follows by combining this with~\eqref{siegel-cramer2},~\eqref{lambda-cramer} and applying the triangle inequality for Gowers norms. 
\end{proof}

\section{Quantitative linear equations in primes result}\label{sec:linprimes} 

In this section we sketch the derivation of Theorem~\ref{quant-thm} from Theorem~\ref{main}.  The arguments follow those in~\cite{gt-linear} extremely closely, and we will assume familiarity with those arguments in this section.

In~\cite[\S 4]{gt-linear}, the qualitative version of Theorem~\ref{quant-thm} was derived from~\cite[Theorem 4.5]{gt-linear} using some elementary linear algebra and convex geometry.  The same arguments, replacing all qualitative decay terms with doubly logarithmic ones instead, show that Theorem \ref{quant-thm} will follow if one shows the following.

\begin{theorem}[Primes in affine lattices in normal form]  The statement of~\cite[Theorem 4.5]{gt-linear} continues to hold if the qualitative error term $o(N^d)$ in that theorem is replaced with the doubly logarithmic term $O_{s,d,t}((\log\log N)^{-c} N^d)$ for some $c = c_{s,d,t}>0$ depending only on the parameters $s,d,t$.  (Also one ignores the references to the now proven conjectures $\mathrm{GI}(s), \mathrm{MN}(s)$ in that theorem.)
\end{theorem}

Next, we apply the $W$-trick arguments in~\cite[\S 5]{gt-linear}, setting $w$ equal\footnote{Note that for this choice of $w$, the prime number theorem in arithmetic progressions of modulus $W = P(w)$ has an effective error term with good decay, as we can use the effective lower bounds on $L(1,\chi)$ in this case rather than Siegel's theorem.  It should however be possible to work with larger choices of $w$ by incorporating the contribution of a $Q$-Siegel zero, as is done elsewhere in this paper.} to $(\log\log N)^\eta$ for a sufficiently small $\eta>0$ depending on $s,d,t$ rather than the more conservative choice of $\log\log\log N$.  These arguments then reduce matters to showing

\begin{theorem}[$W$-tricked primes in affine lattices]  The statement of~\cite[Theorem 5.2]{gt-linear} continues to hold if the qualitative error term $o(N^d)$ in that theorem is replaced with the doubly logarithmic term $O_{s,d,t}((\log\log N)^{-c} N^d)$ for some $c = c_{s,d,t}>0$ depending only on the parameters $s,d,t$.  (Again one ignores the references to the now-proven conjectures $\mathrm{GI}(s), \mathrm{MN}(s)$ in that theorem.)
\end{theorem}

The statement of~\cite[Theorem 5.2]{gt-linear} involves the functions
$$ \Lambda'_{b_i,W}(n) := \frac{\phi(W)}{W} \Lambda'(Wn+b_i)$$
where $W \coloneqq P(w)$ and $\Lambda'$ is the restriction of $\Lambda$ to the primes.  From Corollary~\ref{cor:Wtrick}, we have the doubly logarithmic bound
$$ \|\Lambda'_{b_i,W} - 1 \|_{U^{s+1}[\frac{N-b_i}{W}]} \ll_{s,\eta} (\log\log N)^{-c \eta}$$
for some $c>0$ depending only on $s$ (and assuming as we may that $N$ is sufficiently large depending on $s,d,t,\eta$).  On the other hand, a routine modification of Proposition~\ref{tickle} (see also~\cite[Proposition 6.4]{gt-linear}) reveals that for any $D$, the function $1 + \Lambda'_{b_1,W} + \dots + \Lambda'_{b_t,W}$ on the interval $[N^{3/5},N]$ can be bounded by $C \nu$ for some $C = O_{D,\eta}(1)$ and some $\nu$ that obeys the $(D,D,D)$ linear forms condition from~\cite[Definition 6.2]{gt-linear} with the $o_D(1)$ term in~\cite[(6.2)]{gt-linear} replaced by $O_D((\log \log N)^{-c_{D,\eta}})$ for some $c_{D,\eta}>0$.  (We will not need the now largely obsolete ``correlation condition'' in~\cite[Definition 6.3]{gt-linear}.)  The claim now follows from the generalized von Neumann theorem in~\cite[Theorem 7.1]{gt-linear} proven in~\cite[Appendix C]{gt-linear}, after replacing all $o(1)$ type terms with $O((\log\log N)^{-c})$ type terms, noting that all the functions denoted $\kappa$ in that appendix can be taken to be polynomial in nature; we leave the details to the interested reader.

\begin{remark} It seems likely that one can improve Theorem~\ref{quant-thm} further, by allowing the parameter $L$ to be as large as $(\log\log N)^c$ with uniform control on error terms; one may even be able to handle significantly larger values of the linear coefficients $\dot \psi_i$ than this by incorporating the various methods used in this paper.  We will not pursue such refinements here, however.
\end{remark}

\section{Arithmetic progressions with shifted prime difference}\label{sec:sarkozy}

In this section we prove Theorem~\ref{thm_sarkozy}.

\begin{proof}[Proof of Theorem~\ref{thm_sarkozy}] In what follows, let $\Lambda'$ stand for the von Mangoldt function restricted to the primes. Let $A\subset [N]$ be any set with $|A|\geq \delta N$ and $\delta=(\log \log \log \log N)^{-c}$ for small enough $c>0$ depending on $k$. Let $w=(\log \log N)^{1/2}$, and let $W=\prod_{p\leq w}p$. By the pigeonhole principle, we can pick $1\leq b\leq W$ such that
$A':=\{n:Wn+b\in A\}$
has size $\geq \delta N/W$. Then the count of $k$-term arithmetic progressions in $A$ with shifted prime difference  is
\begin{align*}
&\geq \frac{1}{\log N}\sum_{n\leq N/W}\sum_{d\leq N/W}1_{A}(Wn+b)1_{A}(Wn+b+Wd)\cdots 1_{A}(Wn+b+Wd(k-1))\Lambda'(Wd+1)\nonumber\\
&=\frac{1}{\log N}\sum_{n\leq N/W}\sum_{d\leq N/W}1_{A'}(n)1_{A'}(n+d)\cdots 1_{A'}(n+d(k-1))\Lambda'(Wd+1):=T.
\end{align*}

Note that we have the trivial bound $\sum_{n\leq N}|\Lambda(n)-\Lambda'(n)|\ll N^{1/2}\log N$. Using this and our quantitative Gowers uniformity result in the form of Corollary~\ref{cor:Wtrick}, we have
\begin{align*}
\left\|\frac{\phi(W)}{W}\cdot \Lambda'(W\cdot+1)-1\right\|_{U^k[N/W]}&=\left\|\frac{\phi(W)}{W}\Lambda(W\cdot+1)-1\right\|_{U^k[N/W]}+O(N^{-1/2+o(1)})\\
&\ll (\log \log N)^{-c'}
\end{align*}
for some $c'>0$ depending on $k$.  Therefore, by applying the generalized von Neumann theorem for pseudorandomly majorized functions \cite[Theorem 7.1]{gt-linear} (with similar remarks on quantitative error terms as in the proof of Theorem~\ref{quant-thm}), we see that $T$ is  equal to
\begin{align}\label{eq_varnavides}\begin{split}
&\frac{W}{\varphi(W)(\log N)}\sum_{n\leq N/W}\sum_{0\leq d\leq N/W} 1_{A'}(n)1_{A'}(n+d)\cdots 1_{A'}(n+d(k-1))\\
&+O\left(\left(\frac{N}{W}\right)^2(\log \log N)^{-c'}\right).
\end{split}
\end{align}
For $\rho>0$, Let $N_k(\rho)$ denote the smallest positive integer such that, for any $m\geq N_k(\rho)$, any subset of $[m]$ of size $\geq \rho m$ contains a non-trivial $k$-term arithmetic progression. Let $c(k,\delta):=\delta^2/(16N_k(\delta/2)^3)$. Then, by a well-known argument of Varnavides for quantifying Szemer\'edi's theorem (see e.g.~\cite[Theorem 18, Remark 1]{shkredov}), for $N/W> 2N_k(\delta/2)$ the expression \eqref{eq_varnavides} is
\begin{align*}
\geq \frac{W}{\varphi(W)} c(k,\delta)\left(\frac{N}{W}\right)^2+O\left(\left(\frac{N}{W}\right)^2(\log \log N)^{-c'}\right).    
\end{align*}
We have $c(k,\delta)\gg \exp(-\exp(\delta^{-C}))$ for some $C\geq 1$ (depending on $k$) by Gowers’s bound $N_k(\rho)\ll \exp(\exp(\rho^{-C'}))$, proved in~\cite{gowers}. Now, if $c$ is chosen small enough in the definition of $\delta$, we have $c(k,\delta)\gg (\log \log N)^{-o(1)}$, which proves the statement of the theorem for $k\geq 4$. For $k=4$, the same argument works, except that we now use the bound $N_4(\rho)\ll \exp(\rho^{-C})$ from~\cite{green-tao-4AP} to get $c(4,\delta)\gg \exp(-C\delta^{-C})$, which enables taking $\delta=(\log \log \log N)^{-c}$ for some $c>0$. Finally, for $k=3$, using the very recent bound~\cite{Kelley-Meka} $N_3(\rho)\ll \exp((\log(1/\rho))^{C})$ we have $c(3,\delta)\gg \exp(-C(\log(1/\delta))^{C})$, which enables taking $\delta=\exp(-(\log \log \log N)^{c})$ for some $c>0$.
\end{proof}

\appendix

\section{Quantitative Leibman theory with explicit dimension dependence} \label{appendix: a}

In this appendix we refine the equidistribution theory on nilmanifolds from~\cite{gt-leibman}, tracking more carefully the dependence on dimension $m$ (but allowing all constants to depend on the degree $d$, which in our context will equal to $k-1$).  The key point is that all bounds will be at most double exponential in this dimension parameter, basically because the arguments rely on applying the Cauchy--Schwarz inequality (or variants such as the van der Corput inequality) a number of times that is polynomial in the dimension. (Many of the estimates here require only single exponential dependence on $m$ at worst, but the induction on dimension we use only closes if we allow double exponential dependence.)  In order to improve this double exponential dependence it would seem necessary to adopt a different approach to equidistribution that is not as reliant on so many applications of the Cauchy--Schwarz inequality.

We freely use the notation from~\cite{gt-leibman}, and let $m$ be a dimensional parameter.  To conveniently track bounds that depend in double-exponential fashion on the dimension we adopt the following notation.  For any $0 < \delta < 1/2$ let $\poly_m(\delta)$ to be any quantity lower bounded by $\gg \exp(-\exp(m^{O(1)})) \delta^{\exp(m^{O(1)})}$, and for any $Q > 2$ let $\poly_m(Q)$ be any quantity upper bounded by $\ll \exp(\exp(m^{O(1)})) Q^{\exp(m^{O(1)})}$.  In particular $\poly_m(1/\delta)$ is any quantity upper bounded by $\ll \exp(\exp(m^{O(1)})) \delta^{-\exp(m^{O(1)})}$.  

We begin with a more quantitative version of~\cite[Lemma 3.1]{gt-leibman}:

\begin{lemma}[Quantitative Kronecker Theorem]\label{qkt}  Let $m \geq 1$ and $0 < \delta < 1/2$, $\alpha \in \R^m$, $N \geq 1$.  If $(\alpha n \mod \Z^m)_{n \in [N]}$ is not $\delta$-equidistributed in $\R^m/\Z^m$, then there exists $k \in \Z^m$ with $0 < |k| \ll \poly_m(1/\delta)$ such that $\|k \cdot \alpha \|_{\R/\Z} \ll \poly_m(1/\delta) / N$.
\end{lemma}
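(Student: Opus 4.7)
The plan is to run the standard Fourier-analytic proof of the Kronecker equidistribution theorem, while tracking the dependence on the dimension $m$ with some care so that every constant can be absorbed into the $\poly_m$ notation.

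First I would unpack the failure of $\delta$-equidistribution to produce a $1$-Lipschitz function $F \colon \R^m/\Z^m \to \C$ with $\int F = 0$ and
\[
 \left| \E_{n \in [N]} F(\alpha n) \right| > \delta.
\]
Since $F$ is $1$-Lipschitz on a torus of diameter $O(m^{1/2})$, we have $\|F\|_\infty \ll m^{1/2}$. I would then smooth $F$ in Fourier space by convolving it with an $m$-fold product Fej\'er (or Jackson) kernel on $\R^m/\Z^m$ with cutoff $K$, obtaining a trigonometric polynomial
\[
 F_K(x) = \sum_{0 < |k|_\infty \leq K} \hat F_K(k) e(k \cdot x)
\]
supported on Fourier modes of size at most $K$, and satisfying $\|F - F_K\|_\infty \ll m^{O(1)}/K$ together with the coefficient bound $|\hat F_K(k)| \leq \|F_K\|_1 \ll m^{1/2}$.

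Choosing $K$ of the size $m^{O(1)}/\delta$ ensures the approximation error is at most $\delta/2$, so that $F_K$ inherits the discrepancy bound $|\E_{n \in [N]} F_K(\alpha n)| > \delta/2$. Expanding $F_K$ in its Fourier series and applying the triangle inequality together with pigeonhole over the $O(K^m)$ non-zero lattice points $|k|_\infty \leq K$ produces some $k \in \Z^m$ with $0 < |k|_\infty \leq K$ and
\[
 \left| \E_{n \in [N]} e(k \cdot \alpha n) \right| \gg \frac{\delta}{K^m \cdot m^{1/2}} \gg \poly_m(\delta).
\]
Finally the elementary geometric series estimate $|\E_{n \in [N]} e(n\theta)| \ll 1/(N \|\theta\|_{\R/\Z})$ (applied with $\theta = k \cdot \alpha$) converts the lower bound on the exponential sum into the desired upper bound $\|k \cdot \alpha\|_{\R/\Z} \ll \poly_m(1/\delta)/N$, while the bound $|k| \leq m^{1/2} K \ll \poly_m(1/\delta)$ is automatic from the construction.

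The only step that requires any thought is the Fej\'er smoothing, and the main (minor) obstacle is purely bookkeeping: one must verify that the constants produced by the $m$-dimensional Fej\'er kernel (in particular the $L^\infty$ bound $\|F - F_K\|_\infty \ll m^{O(1)}/K$ and the coefficient bound $|\hat F_K(k)| \ll m^{O(1)}$) are at worst polynomial in $m$, so that after raising $K^m \leq (m^{O(1)}/\delta)^m$ to the $m$-th power we still obtain a bound of shape $\exp(\exp(m^{O(1)})) \delta^{-\exp(m^{O(1)})}$, which is exactly what the $\poly_m$ notation permits.
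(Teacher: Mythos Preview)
Your argument is correct and follows essentially the same Fourier-analytic route as the paper, which simply says to rerun the proof of \cite[Lemma 3.1]{gt-leibman} while tracking constants more carefully (specifically, sharpening the kernel tail bound \cite[(3.3)]{gt-leibman} to $\sum_{|k|\geq M} |\hat K(k)| \ll \poly_m(1/\delta) M^{-1}$). The only cosmetic difference is that the paper phrases the smoothing step as a tail estimate on the Fourier coefficients of the convolution kernel, whereas you phrase it as an $L^\infty$ approximation bound $\|F-F_K\|_\infty \ll m^{O(1)}/K$ via a Fej\'er/Jackson kernel; these are equivalent formulations of the same estimate, and both feed into the identical pigeonhole-plus-geometric-series endgame.
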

 
\begin{proof}  The ``simple calculation'' used to establish~\cite[(3.3)]{gt-leibman}, when done a little more carefully, gives
\begin{equation}\label{33}
 \sum_{\substack{k \in \Z^m\\ |k| \geq M}} |\hat K(k)| \ll \poly_m(1/\delta) M^{-1}
\end{equation}
and by chasing through the argument with this bound we obtain the claim.
\end{proof}

This gives a version of~\cite[Lemma 3.7]{gt-leibman}:

\begin{lemma}[Vertical oscillation reduction]\label{vor} Let  $G/\Gamma$ be a filtered nilmanifold of degree $d$, with vertical torus dimension $m_d$. Let $0 < \delta < 1/2$, and let $g: \Z \to G$ be a polynomial sequence for which $(g(n)\Gamma)_{n \in [N]}$ is not $\delta$-equidistributed.  Then there is a vertical character $\xi$ with $|\xi| \leq \poly_{m_d}(1/\delta)$ such that $(g(n)\Gamma)_{n \in [N]}$ is not $\poly_{m_d}(\delta)$-equidistributed along the vertical oscillation $\xi$.
\end{lemma}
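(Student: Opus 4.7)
The plan is to adapt the proof of \cite[Lemma 3.7]{gt-leibman} while tracking the dependence on $m_d$. By hypothesis there is a $1$-Lipschitz $F \colon G/\Gamma \to \C$ with $\int F = 0$ and $|\E_{n\in[N]} F(g(n)\Gamma)| \geq \delta$. The vertical torus $T_d \coloneqq G_d/(G_d \cap \Gamma)$ has dimension $m_d$ and acts isometrically on $G/\Gamma$ by left translation (since $G_d$ is central, this coincides with the right action, preserving the left-invariant Mal'cev metric). Fourier-decompose $F$ under this action as $F = \sum_{\xi \in \Z^{m_d}} F_\xi$, where
$$ F_\xi(x) \coloneqq \int_{T_d} e(-\xi \cdot u) F(u \cdot x)\, du $$
is the $\xi$-isotypic component; each $F_\xi$ is $1$-Lipschitz and obeys the vertical transformation law $F_\xi(v \cdot x) = e(\xi \cdot v) F_\xi(x)$, so it is a function of ``vertical oscillation $\xi$'' in the sense of \cite{gt-leibman}.

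To isolate a single character, convolve along $T_d$-orbits with a smooth bump kernel. Fix (in any Mal'cev coordinate on $T_d$) a tensor product $K$ of a smooth $1$-dimensional bump on $\R$ with $\int K = 1$ supported in the unit ball, and set $K_\eps(u) \coloneqq \eps^{-m_d} K(u/\eps)$ with $\eps \coloneqq \delta/4$. Then $\tilde F \coloneqq K_\eps * F$ (convolved along $T_d$) satisfies $\|\tilde F - F\|_\infty \leq \eps = \delta/4$ by the Lipschitz bound, and Fourier-expands as $\tilde F = \sum_\xi \hat K(\eps \xi) F_\xi$. Because $\hat K$ tensorizes, the decay $|\hat K(\eta)| \leq C_A^{m_d}\prod_i(1+|\eta_i|)^{-A}$ is available for any fixed $A$, and integrating over $\Z^{m_d}$ gives the tail estimate $\sum_{|\xi| > M} |\hat K(\eps \xi)| \leq \poly_{m_d}(1/\delta) \cdot M^{-1}$ for $M \geq 1/\eps$. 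Choosing $M \coloneqq \poly_{m_d}(1/\delta)$ sufficiently large and using $\|F_\xi\|_\infty \leq 1$ makes the tail contribute at most $\delta/8$ in sup norm.

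Combining the two error bounds yields
$$ \Bigl|\E_{n\in[N]} \sum_{|\xi|\leq M}\hat K(\eps\xi) F_\xi(g(n)\Gamma)\Bigr| \geq \delta/2. $$
There are at most $(2M+1)^{m_d} = \poly_{m_d}(1/\delta)$ lattice points $\xi$ with $|\xi| \leq M$, and $|\hat K(\eps\xi)| \leq \|K\|_1 = 1$, so pigeonhole produces a single $\xi$ with $|\xi| \leq M = \poly_{m_d}(1/\delta)$ and $|\E_{n\in[N]} F_\xi(g(n)\Gamma)| \geq \poly_{m_d}(\delta)$. Since $F_\xi$ is $1$-Lipschitz of vertical oscillation $\xi$, this is the sought-after witness.

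The main technical delicacy is the tail bound in the smoothing step: one must integrate by parts of order growing with $m_d$ in order to beat the $m_d$-dimensional volume growth of the lattice, and each integration contributes $L^1$-norms of derivatives of $K$ that blow up like $C^{m_d}$ for the tensor-product choice, as well as combinatorial factors from the multi-index Leibniz rule. The crucial observation is that all these factors remain within the $\poly_{m_d}$ (at worst double-exponential in $m_d$) budget, so the composite of smoothing and pigeonhole is admissible; every other step (Fourier decomposition, Lipschitz convolution estimate, counting lattice points) costs only a single $\poly_{m_d}$ factor.
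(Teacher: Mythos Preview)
Your argument is correct and is essentially the same as the paper's, which simply says to repeat the proof of \cite[Lemma~3.7]{gt-leibman} verbatim with the refined tail bound \eqref{33} in place of \cite[(3.3)]{gt-leibman}. Your vertical Fourier expansion, smoothing by a tensor-product bump of width $\asymp\delta$, and pigeonhole are exactly that argument written out, and the point you flag as the ``main technical delicacy'' (the kernel tail bound with constants at worst exponential in $m_d$) is precisely the content of \eqref{33}.
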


\begin{proof} Repeat the proof of~\cite[Lemma 3.7]{gt-leibman} verbatim, using the estimate \eqref{33} in place of~\cite[(3.3)]{gt-leibman}.
\end{proof}

Now we state the main technical theorem on quantitative Leibman theory (a version of~\cite[Theorem 7.1]{gt-leibman}):

\begin{theorem}[Variant of Main Theorem]\label{main-variant}  Let $m \geq m_* \geq 0$ be integers, $0 < \delta < 1/2$, $N \geq 1$.  Let $G/\Gamma$ be a filtered nilmanifold of degree $d$, nonlinearity dimension $m_*$ (defined in~\cite[Section 7]{gt-leibman}), and complexity at most $1/\delta$.  Let $g: \Z \to G$ be a polynomial sequence. If $(g(n)\Gamma)_{n \in [N]}$ is not $\delta$-equidistributed then there exists a horizontal character $\eta$ with $0 < |\eta| \leq \delta^{-\exp((m+m_*)^{C_d})}$ such that 
$$ \| \eta \circ g \|_{C^\infty[N]} \leq \delta^{-\exp((m+m_*)^{C_d})}$$
where $C_d$ is a sufficiently large constant depending only on $d$.
\end{theorem}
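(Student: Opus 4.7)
The plan is to follow the inductive strategy of \cite[Theorem 7.1]{gt-leibman}, but with careful bookkeeping of how the implied constants depend on the dimension $m$ of $G$ and the nonlinearity dimension $m_*$. Throughout, I would allow bounds of the form $\poly_m(1/\delta) = \delta^{-\exp(m^{O(1)})}$ in intermediate steps, and close the induction in such a way that, after $d$ rounds of recursion on the degree, the final exponent on $1/\delta$ grows only to $\exp((m+m_*)^{C_d})$ for a sufficiently large $C_d$ depending on $d$.

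The base case is degree $d=1$, where $g$ is an affine sequence $g(n) = g(0) a^n$. After passing to the abelianization $G/[G,G]$, the orbit becomes a linear sequence $n \alpha$ on a torus $\R^m/\Z^m$, and non-equidistribution on $G/\Gamma$ at scale $\delta$ translates into non-equidistribution on the torus at scale $\poly_m(\delta)$. The quantitative Kronecker theorem (Lemma \ref{qkt}) then supplies the required horizontal character, producing bounds of the form $\poly_m(1/\delta)$ compatible with $C_1$.

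For the inductive step, I would apply Lemma \ref{vor} to extract a vertical character $\xi$ of magnitude $\poly_{m_d}(1/\delta)$, where $m_d$ is the dimension of the vertical torus $G_d/(G_d \cap \Gamma)$, such that the sequence $e(\xi \cdot g(n))$ is not $\poly_{m_d}(\delta)$-equidistributed along the fiber direction. Following the argument of \cite[Section 8]{gt-leibman}, a van der Corput / Cauchy--Schwarz step then guarantees that, for a proportion $\poly_m(\delta)$ of shifts $h$ in a suitable window, the multiplicative derivative $\partial_h g(n) := g(n+h) g(n)^{-1}$ projects to a polynomial sequence on the quotient filtered nilmanifold $(G/G_d)/(\Gamma/(\Gamma \cap G_d))$, which has degree $d-1$, and which fails to be $\poly_m(\delta)$-equidistributed there. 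The induction hypothesis yields, for each such $h$, a horizontal character $\eta_h$ on $G/G_d$ (equivalently one on $G$ annihilating $G_d$) with $|\eta_h|$ and $\|\eta_h \circ \partial_h g\|_{C^\infty[N]}$ both of size at most $\delta^{-\exp((m+m_*)^{C_{d-1}})}$. A pigeonhole over the finitely many possibilities for $\eta_h$, followed by the Waring-type difference lemma \cite[Lemma 3.3]{gt-mobius} and Vinogradov's lemma \cite[Lemma 3.4]{gt-mobius} applied to the coefficients of $\eta \circ g$ (as in the derivation of \eqref{party} in Section \ref{sec:mobius}), consolidates these into a single horizontal character $\eta$ of $G$ with $\|\eta \circ g\|_{C^\infty[N]}$ of the desired size.

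The main obstacle is the careful accounting of dimension dependence at each recursion. The van der Corput and Cauchy--Schwarz manipulations effectively square the equidistribution defect, and in the full argument these are iterated a number of times comparable to $m$ (since the filtration layers may themselves have large dimension). Each squaring doubles the exponent on $1/\delta$, which is the source of double-exponential dependence on $m$; to keep the exponent at each induction step no worse than $\exp((m+m_*)^{O(1)})$, I would bundle all intermediate bounds into the $\poly_m(\cdot)$ shorthand and verify that the closure identity $\poly_m(\poly_m(1/\delta)) = \poly_m(1/\delta)$ carries through each transition (vertical reduction, degree reduction, character pigeonhole, Vinogradov's lemma). The role of the nonlinearity dimension $m_*$ is to isolate directions where $g$ is effectively linear, which can be handled directly by Lemma \ref{qkt} at cost $\poly_m(1/\delta)$, so that the recursion on $d$ only encounters the genuinely nonabelian part of the filtration and $C_d$ can be chosen independently of $m,m_*$.
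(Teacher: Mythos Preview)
Your outline has the right broad shape (vertical reduction, van der Corput, induction), but there is a genuine structural gap in the degree-reduction step, and the role of $m_*$ is misidentified.

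First, after van der Corput one does \emph{not} obtain a sequence on $(G/G_d)/(\Gamma G_d/G_d)$ of degree $d-1$. In the non-abelian setting the product $F(g(n+h)\Gamma)\overline{F(g(n)\Gamma)}$ is not a function of the multiplicative derivative $\partial_h g(n)$ alone, so projecting to $G/G_d$ loses the correlation you are trying to track. The actual argument (following \cite[\S7]{gt-leibman}) passes instead to the \emph{relative square} $G^\Box \subset G \times G$, on which the pair $(g(n+h),g(n))$ becomes a polynomial sequence $g_h^\Box$ of reduced nonlinearity degree; the induction hypothesis is applied there, yielding a horizontal character on $G^\Box$ which then has to be decomposed as a pair $(\eta_1,\eta_2)$ with $\eta_1$ on $G$ and $\eta_2$ on $G_2$ annihilating $[G,G_2]$.

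Second, the induction is on the \emph{pair} $(d,m_*)$, not on $d$ alone. After the $G^\Box$ step and a bracket-polynomial analysis (\cite[Proposition 5.3, Claim 7.7]{gt-leibman}), one lands in a dichotomy: either a nontrivial horizontal character $k\cdot\gamma$ on $G$ is already nearly integral (and one is done), or $\eta_2$ descends through $[G,G]$, in which case one passes to a subnilmanifold on which the nonlinearity dimension drops to $m_*-1$ and invokes the induction hypothesis at the \emph{same} $d$. Your description of $m_*$ as ``directions where $g$ is linear, handled by Kronecker'' does not capture this; it is precisely this secondary induction that forces the exponent to be $\exp((m+m_*)^{C_d})$ rather than $\exp(m^{C_d})$, and it is why the paper's final inequality $\poly_m(1/\delta)^{\exp((m+m_*-1)^{C_d})} \leq \delta^{-\exp((m+m_*)^{C_d})}$ is needed to close the loop.
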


We now prove this theorem.  We assume inductively that the claim has already been established for smaller values of $d$, or for the same value of $d$ and smaller values of $m_*$.  Henceforth we refine the $\poly_m$ notation by permitting the implied constants to depend on the constant $C_{d-1}$, but not on $C_d$.

By repeating the derivation of~\cite[(7.1)]{gt-leibman} (using Lemma~\ref{vor} in place of~\cite[Lemma 3.7]{gt-leibman}) we may find some function $F: G/\Gamma \to \C$ with $\|F\|_{\Lip} \ll \poly_m(1/\delta)$ and vertical frequency $\xi$ with $|\xi| \ll \poly_m(1/\delta)$ such that $(g(n)\Gamma)_{n\in [N]}$ is not $\delta^{O(1)}$-equidistributed along $\xi$, and such that
$$ \left|\E_{n \in [N]} F(g(n)\Gamma) - \int_{G/\Gamma} F\right| \gg \poly_m(\delta).$$
If $\xi=0$ then a repetition of the arguments after~\cite[(7.1)]{gt-leibman} gives the claim from the induction hypothesis, so without loss of generality we assume $\xi \neq 0$, thus we now have
$$ |\E_{n \in [N]} F(g(n)\Gamma)| \gg \poly_m(\delta).$$
Repeating the reductions after~\cite[(7.2)]{gt-leibman} we may assume that $g(0)=\id_G$ and $|\psi(g(1))| \leq 1$, where $\psi: G \to \R^m$ is the Mal'cev coordinate map.  Continuing the argument down to~\cite[(7.8)]{gt-leibman} we conclude that
$$ |\E_{n \in [N]} \overline{F_h^\Box}(\overline{g_h^\Box(n)} \overline{\Gamma^\Box})| \gg \poly_m(\delta)$$
with $\overline{F_h^\Box}, \overline{g_h^\Box}, \overline{\Gamma^\Box}$ defined as in~\cite{gt-leibman}.

One can rather tediously verify that all the estimates in~\cite[Appendix A]{gt-leibman} can be refined by replacing all estimates of the form $X \ll_m Q^{O_m(1)} Y$ with $X \ll \poly_m(Q) Y$.  As a consequence we can refine~\cite[Lemma 7.4]{gt-leibman} (by exact repetition of the proof) to

\begin{lemma}[Rationality bounds for the relative square] There is a $\poly_m(1/\delta)$-rational Mal'cev basis ${\mathcal X}^\Box$ for $G^\Box/\Gamma^\Box$ adapted to the filtration $(G^\Box)_\bullet$ with the property that $\psi_{X^\Box}(x,x')$ is a polynomial of degree $O(1)$ with rational coefficients of height $\poly_m(1/\delta)$ in the coordinates $\psi(x),\psi(x')$.  With respect to the metric $d_{{\mathcal X}^\Box}$ we have $\|F_h^\Box\|_\Lip \ll \poly_m(1/\delta)$ uniformly in $h$.
\end{lemma}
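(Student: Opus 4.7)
The plan is to mimic the proof of \cite[Lemma 7.4]{gt-leibman} line by line, substituting the refined $\poly_m$-type bounds from Appendix A of \cite{gt-leibman} wherever $Q^{O_m(1)}$ bounds were originally used. First I would recall the construction of the relative square: $G^\Box \subset G \times G$ is generated by the diagonal $G^\triangle = \{(g,g) : g \in G\}$ together with $G_2 \times G_2$, with filtration $(G^\Box)_i$ chosen so that $(G^\Box)_i = G^\triangle_i \cdot (G_{i+1} \times G_{i+1})$ in the appropriate sense, and $\Gamma^\Box = G^\Box \cap (\Gamma \times \Gamma)$. The starting point is the Mal'cev basis $\mathcal{X} = (X_1,\dots,X_m)$ for $G/\Gamma$, assumed adapted to the filtration and of complexity at most $1/\delta$.

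Next I would explicitly construct a candidate basis $\mathcal{X}^\Box$ for $\log G^\Box$ by taking suitable combinations of the $(X_i, X_i)$ and $(X_j,0), (0, X_j)$ for $j$ corresponding to $G_2$ and higher steps, rescaled so as to land in $\log \Gamma^\Box$ (in particular one takes the diagonal elements $(X_i,X_i)$ for indices $i$ in the $G/G_2$ layer and the pairs $(X_j,0),(0,X_j)$ for $j$ in $\log G_2$). One then checks by a Baker--Campbell--Hausdorff computation that this collection spans $\log G^\Box$ and yields integer brackets with the required filtration inclusions. The rationality of the change-of-basis matrix between the naive basis and a genuine Mal'cev basis of the second kind is controlled by the BCH structure constants of $\mathcal{X}$, whose heights enter through the refined \cite[Appendix A]{gt-leibman}-style estimates; each such step replaces a $Q^{O_m(1)}$ bound by a $\poly_m(Q)$ bound, giving $\poly_m(1/\delta)$-rationality of $\mathcal{X}^\Box$.

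Having $\mathcal{X}^\Box$ in hand, the polynomial formula for $\psi_{\mathcal{X}^\Box}(x,x')$ in terms of $\psi(x)$ and $\psi(x')$ comes from the explicit BCH inversion. Because BCH truncates at degree $d$ (the nilpotency class), $\psi_{\mathcal{X}^\Box}$ is polynomial of degree $O_d(1)$, and the only question is the height of its coefficients. These coefficients are polynomial expressions in the structure constants of $\mathcal{X}$ and in the entries of the change-of-basis matrix, so by the refined Appendix A estimates they are rationals of height $\poly_m(1/\delta)$. For the Lipschitz bound on $F_h^\Box$, I would argue that $F_h^\Box(x,x') = F(x)\overline{F(x')} \chi_\xi(\text{commutator term})$ (or the analogous expression used in \cite{gt-leibman}) so the Lipschitz constant with respect to $d_{\mathcal{X}^\Box}$ is bounded by $\|F\|_\Lip^2$ times a Jacobian factor for the coordinate change $\psi_{\mathcal{X}^\Box} \leftrightarrow (\psi,\psi)$; the refined height bounds on this coordinate change feed through to give the claimed $\poly_m(1/\delta)$ bound uniformly in $h$.

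The main obstacle, as in \cite{gt-leibman}, is purely bookkeeping: ensuring that at no point in the BCH manipulations does one inadvertently incur a genuinely $m$-dependent exponent in $1/\delta$ beyond the allowed double-exponential factor. Concretely, one must verify that each invocation of a statement from \cite[Appendix A]{gt-leibman} really does admit the claimed upgrade (the paper asserts this in general terms just before the lemma), and that the number of such invocations is polynomial in $m$ so that composing them preserves the $\poly_m$ class. No new conceptual ingredient is needed beyond this accounting.
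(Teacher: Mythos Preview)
Your proposal is correct and matches the paper's approach exactly: the paper's proof is simply the assertion that one repeats the proof of \cite[Lemma 7.4]{gt-leibman} verbatim, using the refined $\poly_m(Q)$ bounds from \cite[Appendix A]{gt-leibman} in place of the original $Q^{O_m(1)}$ bounds. If anything, you have supplied more detail than the paper does about what that repetition actually involves.
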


Continuing the arguments down to~\cite[Lemma 7.5]{gt-leibman}, one can find horizontal characters $\eta_1: G \to \R/\Z$ , $\eta_2: G_2 \to \R/\Z$ with $\eta_2$ annihilating $[G,G_2]$ and $|\eta_1|, |\eta_2| \ll \poly_m(1/\delta)$ such that the character $\eta: G^\Box \to \R/\Z$ defined by
$$ \eta(g',g) \coloneqq \eta_1(g) + \eta_2(g'g^{-1})$$
is such that
$$ \| \eta \circ g_h^\Box \|_{C^\infty([N])} \ll \poly_m(1/\delta)$$
for $\gg \poly_m(\delta) N$ values of $h \in [N]$.

Continuing the argument down to~\cite[(7.16)]{gt-leibman}, and using the induction hypothesis for Theorem~\ref{main-variant} (with $d$ replaced by $d-1$, and $m,m_*$ replaced by quantities not exceeding $2m$), we can find $1 \leq q \ll \poly_m(1/\delta)$ such that
$$ \| \eta_1(g(1)) + \zeta \cdot \{ \gamma h \} + q \alpha h \|_{\R/\Z} \ll \poly_m(1/\delta)/N$$
for $\gg \poly_m(\delta) N$ values of $h \in [N]$, where 
\begin{itemize}
\item[(i)] $\alpha \in \R/\Z$ is the quantity $\alpha \coloneqq \partial^2 (\eta_2 \circ g_2)(0)$, where $g_2(n) = g(n) g(1)^{-n}$ is the nonlinear part of $g$.
\item[(ii)] $\gamma \in (\R/\Z)^{m_\lin}$ is (the first $m_\lin$ components of) $\psi(g(1))$.
\item[(iii)] $\zeta \in \R^{m_\lin}$ is the vector such that
$$ \eta_2([g(1),x]) = \zeta \cdot \psi(x) \mod \Z$$
for all $x \in G$ (extending $\zeta$ by zero to $\R^m$).
\end{itemize}
Here it is important that the implied constants in the $\poly_m$ notation are allowed to depend on $C_{d-1}$ (but not $C_d$).

It is routine to verify that $|\zeta| \ll \poly_m(1/\delta)$.  An inspection of the proof of~\cite[Proposition 5.3]{gt-leibman} and~\cite[Claim 7.7]{gt-leibman}, using Lemma~\ref{qkt} in place of~\cite[Lemma 3.1]{gt-leibman}, shows that we may replace all bounds of the form $X \ll_m \delta^{-O_m(1)} Y$ appearing in these statements by $X \ll \poly_m(1/\delta) Y$, to obtain one of the following claims:

\begin{itemize}
\item[(i)] There is $r \ll \poly_m(1/\delta)$ such that $\|r \zeta_i \mod \Z\|_{\R/\Z} \ll \poly_m(1/\delta)/N$ for all $i=1,\dots,m_\lin$; or
\item[(ii)] There exists $k \in \Z^{m_\lin}$, $0 < |k| \ll \poly_m(1/\delta)$ such that $\| k \cdot \gamma \|_{\R/\Z} \ll \poly_m(1/\delta)/N$.
\end{itemize}

In case (ii) we conclude exactly as in~\cite{gt-leibman}, so suppose we are in case (i).  Arguing as in~\cite{gt-leibman} we can easily close the induction except in the case when $\eta_2$ (and hence $\eta$) annihilates $[G,G]$, at which point the arguments in~\cite{gt-leibman} lead one to conclude that
$$ \| \eta_2 \circ g_2 \|_{\R/\Z} \ll \poly_m(1/\delta)$$
(possibly after first multiplying $\eta_1, \eta_2$ by a positive integer of size $\poly_m(1/\delta)$.  

Repeating the rest of the proof of~\cite[Theorem 7.1]{gt-leibman} (replacing all bounds of the form $X \ll_m \delta^{-O_m(1)} Y$ with $X \ll \poly_m(1/\delta) Y$) and using the induction hypothesis with $(d,m_*)$ replaced by $(d,m_*-1)$, we see that
$$ \| \eta \circ g \|_{C^\infty([N])} \leq \poly_m(1/\delta)^{\exp((m+m_*-1)^{C_d})} $$
for some  horizontal character $\eta: G \to \R/\Z$ with $|\eta| \leq \poly_m(1/\delta)^{\exp((m+m_*-1)^{C_d})}$.  For $C_d$ large enough, we have
$$ \poly_m(1/\delta)^{\exp((m+m_*-1)^{C_d})} \leq \delta^{-\exp((m+m_*)^{C_d})},$$
and Theorem~\ref{main-variant} follows.

Repeating the proof of~\cite[Proposition 9.2]{gt-leibman} (specializing to the single-parameter case $t=1$), we then obtain

\begin{proposition}[Factorization of poorly-distributed polynomial sequences] Let $m \geq 1$, $0 < \delta < 1/2$, $N \geq 1$, $d \geq 0$, let $G/\Gamma$ be a $m$-dimensional filtered nilmanifold of complexity at most $1/\delta$, and let $g \colon \Z \to G$ be a polyonmial sequence.  Suppose that $(g(n) \Gamma)_{n \in [N]}$ is not totally $\delta$-equidistributed.  Then there is a factorization $g = \eps g' \gamma$ with $\eps,g',\gamma \colon \Z \to G$ polynomials
such that
\begin{itemize}
\item[(i)] $\eps: \Z \to G$ is $(\poly_m(1/\delta),N)$-smooth;
\item[(ii)] $g': \Z \to G$ takes values in a connected proper $\poly_m(1/\delta)$-rational subgroup $G'$ of $G$;
\item[(iii)] $\gamma: \Z \to G$ is $\poly_m(1/\delta)$-rational.
\end{itemize}
\end{proposition}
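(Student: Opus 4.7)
The plan is to repeat the proof of Proposition 9.2 in \cite{gt-leibman}, specialized to the single-parameter case $t=1$, while tracking that all bounds on $\delta$ have the $\poly_m$ form rather than the $\delta^{-O_m(1)}$ form used there. This is possible because the only nontrivial input into that argument is a horizontal character produced by an equidistribution theorem, and in our case Theorem \ref{main-variant} supplies such a character with bounds of exactly the required $\poly_m$ quality.

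First, since $(g(n)\Gamma)_{n \in [N]}$ fails to be totally $\delta$-equidistributed, by definition there is an arithmetic progression $P \subset [N]$ of length at least $\delta N$ along which $(g(n)\Gamma)_{n \in P}$ is not $\delta$-equidistributed. Writing $P = \{a_0 + q_0 n : n \in [N']\}$ with $N' \geq \delta N$, and reparametrizing as $\tilde g(n) \coloneqq g(a_0 + q_0 n)$, an application of Theorem \ref{main-variant} (with $m_* = m$) produces a horizontal character $\eta$ of $G$ with $0 < |\eta| \leq \poly_m(1/\delta)$ and $\|\eta \circ \tilde g\|_{C^\infty[N']} \leq \poly_m(1/\delta)$. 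Unwinding the reparametrization, one obtains an analogous bound for $\eta \circ g$ on $[N]$.

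From this character we extract the factorization by Taylor-expanding $\eta \circ g$ in binomial coefficients, rounding each coefficient to a rational of denominator $\poly_m(1/\delta)$ (permissible thanks to the smallness of the $C^\infty$ norm), and lifting this rounding through the Mal'cev coordinate system to a group-level decomposition $g = \eps g' \gamma$. Here $\gamma$ carries the rational parts and is $\poly_m(1/\delta)$-rational; $\eps$ absorbs the small errors $\alpha_j - p_j/q_j$, so that its $j$-th Taylor coefficient has size $\poly_m(1/\delta)/N^j$, giving the required $(\poly_m(1/\delta), N)$-smoothness; and $g'$ takes values in the identity component $G'$ of $\ker\eta$, which is a connected proper $\poly_m(1/\delta)$-rational subgroup of $G$ because $|\eta|\leq\poly_m(1/\delta)$.

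The main technical obstacle is executing the rounding-and-lifting step in the non-abelian setting while maintaining $\poly_m$ control. This depends on the group-theoretic manipulations catalogued in Appendix A of \cite{gt-leibman}: converting between Mal'cev coordinates of the first and second kind, computing products of polynomial sequences via Baker--Campbell--Hausdorff, and bounding the rationality and smoothness of the resulting factors. As already exploited in our inductive proof of Theorem \ref{main-variant} above, every estimate in that appendix upgrades mechanically from a $\delta^{-O_m(1)}$ bound to a $\poly_m(1/\delta)$ bound by inspecting the constants, since the underlying identities are polynomial in a fixed number of Lie-algebra variables with coefficients controlled by the nilmanifold complexity $1/\delta$. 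With these refined estimates in hand, the assembly of $\eps$, $g'$, and $\gamma$ from the rounded Taylor coefficients of $\eta\circ g$ proceeds exactly as in \cite[\S 9]{gt-leibman}, yielding the claimed factorization.
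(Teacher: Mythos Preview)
Your proposal is correct and takes essentially the same approach as the paper, which simply states that the proposition follows by repeating the proof of \cite[Proposition 9.2]{gt-leibman} with $t=1$ and noting that Theorem \ref{main-variant} supplies a horizontal character with $\poly_m$ bounds. Your additional sketch of what that argument actually involves (pass to a subprogression, apply the equidistribution theorem, round the Taylor coefficients of $\eta\circ g$, and lift via the Mal'cev basis manipulations from \cite[Appendix A]{gt-leibman}) is accurate and faithful to the original.
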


In~\cite[Lemma 10.1]{gt-leibman} with $t=1$, one easily verifies that the bound $M^{O_m(1)}$ in the conclusion can be sharpened to $\poly_m(M)$.  We now claim the following quantitative version of~\cite[Theorem 1.19]{gt-leibman}:

\begin{theorem}[Factorization theorem]\label{factor-thm} Let $m \geq 0$, $M_0 \geq 2$, $A \geq 2$, $N \geq 1$, $d \geq 0$.  Let $G/\Gamma$ be an $m$-dimensional filtered nilmanifold of degree $d$ and complexity at most $M_0$, and let $g \colon \Z \to G$ be a polynomial sequence. 	Then there is some $M$ with $M_0 \leq M \leq M_0^{A^{(2+m)^{O_d(1)}}}$, a subgroup $G' \subset G$ which is $M$-rational with respect to ${\mathcal X}$, and a decomposition $g = \eps g' \gamma$ with $\eps,g',\gamma \colon \Z \to G$ polynomials such that
\begin{itemize}
\item[(i)] $\eps$ is $(M,N)$-smooth;
\item[(ii)] $g'$ takes values in $G'$ and $(g'(n)\Gamma)_{n \in [N]}$ is totally $1/M^A$-equidistributed in $G'/\Gamma'$, with respect to a Mal'cev basis ${\mathcal X}'$ consisting of $M$-rational linear combinations of the basis elements of the Mal'cev basis for $G$;
\item[(iii)] $\gamma$ is $M$-rational and $\gamma(n) \Gamma$ is periodic with period at most $M$.
\end{itemize}
\end{theorem}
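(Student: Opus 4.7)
The strategy is to repeat the iterative dimension-reduction argument of \cite[Theorem 1.19]{gt-leibman} while tracking the dependence on the ambient dimension $m$ using the refined quantitative input established just above (the factorization of poorly-distributed polynomial sequences). Throughout I will use the $\poly_m(\cdot)$ notation introduced in the appendix, so that the ``loss'' incurred at each step of the iteration is polynomial in the scale with exponents that are at most singly exponential in $m$.

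The procedure is as follows. Initialize $M^{(0)} \coloneqq M_0$, $G^{(0)} \coloneqq G$, $g^{(0)} \coloneqq g$, $\varepsilon^{(0)} \coloneqq \gamma^{(0)} \coloneqq \id_G$, with the trivial factorization $g = \varepsilon^{(0)} g^{(0)} \gamma^{(0)}$ and the original Mal'cev basis on $G^{(0)}$. At stage $k$, examine whether $(g^{(k)}(n) \Gamma^{(k)})_{n \in [N]}$ is totally $1/(M^{(k)})^A$-equidistributed in $G^{(k)}/\Gamma^{(k)}$, where $\Gamma^{(k)} \coloneqq G^{(k)} \cap \Gamma$ is equipped with its inherited Mal'cev basis of $M^{(k)}$-rational linear combinations of the original basis. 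If so, terminate; otherwise apply the factorization proposition to obtain a decomposition $g^{(k)} = \varepsilon_k g^{(k+1)} \gamma_k$ where $\varepsilon_k$ is $(\poly_m(M^{(k)}), N)$-smooth, $\gamma_k$ is $\poly_m(M^{(k)})$-rational, and $g^{(k+1)}$ takes values in a proper connected $\poly_m(M^{(k)})$-rational subgroup $G^{(k+1)} \subsetneq G^{(k)}$, which one equips with an appropriate Mal'cev basis. Update $\varepsilon^{(k+1)} \coloneqq \varepsilon^{(k)} \varepsilon_k$, $\gamma^{(k+1)} \coloneqq \gamma_k \gamma^{(k)}$, and set $M^{(k+1)}$ to be an upper bound of the form $(M^{(k)})^{A \exp(m^{O_d(1)})}$ that absorbs all the complexity losses from this step (including those from composing polynomial sequences, from conjugations, and from choosing the new Mal'cev basis).

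Since each step strictly decreases $\dim G^{(k)}$, the procedure terminates after at most $m$ iterations. Solving the recurrence $M^{(k+1)} \leq (M^{(k)})^{A \exp(m^{O_d(1)})}$ with $M^{(0)} = M_0$ and iterating at most $m$ times gives $M^{(m)} \leq M_0^{(A \exp(m^{O_d(1)}))^m}$; using $A \geq 2$ to absorb $\exp(m^{O_d(1)}) \leq A^{m^{O_d(1)}}$, this is at most $M_0^{A^{(m+2)^{O_d(1)}}}$, which is the bound in the theorem. The periodicity of $\gamma(n)\Gamma$ with period at most $M$ follows from the standard observation (a consequence of \cite[Lemma 10.1]{gt-leibman} with bounds sharpened to $\poly_m$) that an $M$-rational polynomial sequence is automatically periodic modulo $\Gamma$ with period polynomially bounded in $M$ and $m$.

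The chief technical obstacle is the bookkeeping required to show that the constants lost in each iteration only blow up polynomially (in the $\poly_m$ sense) rather than, say, exponentially in $m$. Concretely, one must verify quantitative versions of \cite[Lemmas 9.4--9.7, 10.1]{gt-leibman} (on products, inverses, conjugates and projections of smooth/rational polynomial sequences, and on passing to subgroup Mal'cev bases) with all bounds of the form $M^{O_m(1)}$ replaced by $\poly_m(M)$. As remarked in the paragraph preceding the statement of Theorem \ref{main-variant}, this substitution can be carried out verbatim through the arguments of \cite[Appendix A]{gt-leibman}; the only place where genuine care is required is in the step of choosing a Mal'cev basis for $G^{(k+1)}$ consisting of $M^{(k+1)}$-rational linear combinations of the basis for $G$, since this is where a naive approach would lose a factor that is exponential in the dimension. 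Once these refinements are in place, the iteration closes and the stated bound follows.
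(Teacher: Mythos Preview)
Your proposal is correct and follows essentially the same approach as the paper: iterate the factorization proposition (the $\poly_m$-refined version of \cite[Proposition 9.2]{gt-leibman}) at most $m$ times, with the scale parameter at each step growing by a power $A\cdot\exp(m^{O_d(1)})$, and solve the resulting recurrence. The paper's proof is in fact a one-liner referring to \cite[Theorem 10.2]{gt-leibman} with $t=1$ and the specific choice $\delta_{i+1}=\delta_i^{A^{m^{C_d}}}$; your write-up simply unpacks this iteration and the attendant bookkeeping (with the same reliance on the $\poly_m$ refinements of \cite[Appendix A, Lemma 10.1]{gt-leibman} already noted in the text).
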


\begin{proof}  Repeat the proof of~\cite[Theorem 10.2]{gt-leibman} with $t=1$, setting $\delta_{i+1} := \delta_i^{A^{m^C}}$ for a sufficiently large constant $C = C_d$ depending only on $d$ (in particular, $1/\delta_{i+1}$ is much larger than any quantity of the form $\poly_m(1/\delta_i)^A$ if $C$ is large enough).
\end{proof}

\section{Proof of Theorem \ref{known-unif}}

\begin{proof}[Proof of Theorem~\ref{known-unif}]  Part (i) follows easily from the prime number theorem with Vinogradov--Korobov error terms (for the M\"obius case, see~\cite[Satz 3 in Section V.5]{walfisz}).  Part (ii) for the M\"obius function follows from the strongly logarithmic exponential sum estimates
$$ \sup_\theta |\E_{n \in [N]} \mu(n) e(\theta n)| \ll^\ineff_A \log^{-A} N$$
of Davenport~\cite{davenport}, the Plancherel estimate
\begin{equation}\label{mu-planch}
 \int_0^1 \left|\E_{n \in [N]} \mu(n) e(\theta n)\right|^2 \ d\theta \ll N,
\end{equation}
the circle method, and Cauchy--Schwarz.  For the second part of (ii), observe from Proposition~\ref{cramer-stable} and \eqref{triangle} that we may take $w = \log^{1/100} N$ (say) without loss of generality. The standard Vinogradov estimates for exponential sums over primes (see e.g.,~\cite[Ch. 13]{iwaniec-kowalski}) eventually reveal the logarithmic bounds
$$ \sup_\theta \left|\E_{n \in [N]} (\Lambda(n) - \Lambda_{\Cramer,w}(n)) e(\theta n)\right| \ll^\ineff \log^{-c} N,$$
while the Fourier restriction estimate from~\cite[Proposition 4.2]{gt-restriction} gives
$$ \int_0^1 \left|\E_{n \in [N]} (\Lambda(n) - \Lambda_{\Cramer,w}(n)) e(\theta n)\right|^q\ d\theta \ll_q 1$$
for any $2 < q < \infty$, and the claim now follows from the circle method and H\"older's inequality.  Finally, for (iii), we see from Proposition~\ref{cramer-stable} and \eqref{triangle} that we may assume that $w$ grows sufficiently slowly in $N$, and then the bounds in (iii) follow easily from the main theorems in~\cite{gt-linear} as well as Corollary~\ref{slice}, after inserting the resolution of the inverse conjecture for the Gowers norms (first proven in~\cite{gtz-uk}) and the strong orthogonality of the M\"obius function to nilsequences (first proven in~\cite{gt-mobius}).
\end{proof}

\begin{remark}  An alternate approach to \eqref{lambdam} proceeds by comparing $\Lambda(n) = -\sum_{d|n} \mu(d) \log d$ first with a truncated divisor sum $\Lambda^\sharp(n) \coloneqq - \sum_{d|n: d \leq N^{c_1}} \mu(d) \log d$ for some small absolute constant $c_1>0$, and establishing the strongly logarithmic estimate
$$ \| \Lambda - \Lambda^\# \|_{U^2[N]} \ll^\ineff_A \log^{-A} N$$
from the circle method (here we can use a Plancherel bound analogous to \eqref{mu-planch} that loses a factor of $\log N$, thus avoiding the need to invoke the restriction theory from~\cite{gt-restriction}), and the logarithmic estimate
$$ \| \Lambda^\# - \Lambda_{\Cramer,w} \|_{U^2[N]} \ll \log^{-c} N$$
from sieve theory with (say) $w = \log^{1/100} N$, and then applying the triangle inequality \eqref{triangle}; we leave the details to the interested reader.  In this paper we found the Cram\'er models $\Lambda_{\Cramer,w}$ to be slightly more convenient technically to work with than the truncated divisor sum model $\Lambda^\sharp$, and therefore made no further use of $\Lambda^\sharp$ here.
\end{remark}

\label{appendix_qualitative}

\bibliography{gowersrefs}

\begin{thebibliography}{10}

\bibitem{bloom-survey}
T.~F. Bloom.
\newblock Quantitative inverse theory of {G}owers uniformity norms.
\newblock {\em Ast\'{e}risque}, (430):237--273, 2021.

\bibitem{chinis}
J.~{Chinis}.
\newblock {Siegel Zeros and Sarnak's Conjecture}.
\newblock {\em arXiv e-prints}, page arXiv:2105.14653, May 2021.

\bibitem{cladek}
L.~Cladek and T.~Tao.
\newblock Additive energy of regular measures in one and higher dimensions, and the fractal uncertainty principle.
\newblock {\em Ars Inven. Anal.}, pages Paper No. 1, 38, 2021.

\bibitem{cfz}
D.~Conlon, J.~Fox, and Y.~Zhao.
\newblock The {G}reen-{T}ao theorem: an exposition.
\newblock {\em EMS Surv. Math. Sci.}, 1(2):249--282, 2014.

\bibitem{cfz-rel}
D.~Conlon, J.~Fox, and Y.~Zhao.
\newblock A relative {S}zemer\'{e}di theorem.
\newblock {\em Geom. Funct. Anal.}, 25(3):733--762, 2015.

\bibitem{cramer}
H.~Cram\'er.
\newblock On the order of magnitude of the difference between consecutive prime numbers.
\newblock {\em Acta Arith.}, 2:23--46, 1936.

\bibitem{davenport}
H.~Davenport.
\newblock On some infinite series involving arithmetical functions. {II}.
\newblock {\em Quart. J. Math. Oxf.}, 8:313--320, 1937.

\bibitem{davenport-book}
H.~Davenport.
\newblock {\em Multiplicative number theory}, volume~74 of {\em Graduate Texts in Mathematics}.
\newblock Springer-Verlag, New York, third edition, 2000.
\newblock Revised and with a preface by Hugh L. Montgomery.

\bibitem{dodos-k}
P.~Dodos and V.~Kanellopoulos.
\newblock Uniformity norms, their weaker versions, and applications.
\newblock {\em Acta Arith.}, 203(3):251--270, 2022.

\bibitem{fhk}
N.~Frantzikinakis, B.~Host, and B.~Kra.
\newblock Multiple recurrence and convergence for sequences related to the prime numbers.
\newblock {\em J. Reine Angew. Math.}, 611:131--144, 2007.

\bibitem{friedlander-iwaniec}
J.~Friedlander and H.~Iwaniec.
\newblock {\em Opera de cribro}, volume~57 of {\em American Mathematical Society Colloquium Publications}.
\newblock American Mathematical Society, Providence, RI, 2010.

\bibitem{german-katai}
L.~Germ\'{a}n and I.~K\'{a}tai.
\newblock On multiplicative functions on consecutive integers.
\newblock {\em Lith. Math. J.}, 50(1):43--53, 2010.

\bibitem{gowers}
W.~T. Gowers.
\newblock A new proof of {S}zemer\'{e}di's theorem.
\newblock {\em Geom. Funct. Anal.}, 11(3):465--588, 2001.

\bibitem{gowers-hb}
W.~T. Gowers.
\newblock Decompositions, approximate structure, transference, and the {H}ahn-{B}anach theorem.
\newblock {\em Bull. Lond. Math. Soc.}, 42(4):573--606, 2010.

\bibitem{granville}
A.~Granville.
\newblock Harald {C}ram\'{e}r and the distribution of prime numbers.
\newblock {\em Scand. Actuar. J.}, (1):12--28, 1995.
\newblock Harald Cram\'{e}r Symposium (Stockholm, 1993).

\bibitem{green-sarkozy}
B.~{Green}.
\newblock {On S{\'a}rk{\"o}zy's theorem for shifted primes}.
\newblock {\em \textnormal{To appear in} J. Am. Math. Soc.}, page arXiv:2206.08001, June 2022.

\bibitem{gt-restriction}
B.~Green and T.~Tao.
\newblock Restriction theory of the {S}elberg sieve, with applications.
\newblock {\em J. Th\'{e}or. Nombres Bordeaux}, 18(1):147--182, 2006.

\bibitem{gt-U3}
B.~Green and T.~Tao.
\newblock An inverse theorem for the {G}owers {$U^3(G)$} norm.
\newblock {\em Proc. Edinb. Math. Soc. (2)}, 51(1):73--153, 2008.

\bibitem{gt-longaps}
B.~Green and T.~Tao.
\newblock The primes contain arbitrarily long arithmetic progressions.
\newblock {\em Ann. of Math. (2)}, 167(2):481--547, 2008.

\bibitem{gt-quadratic}
B.~Green and T.~Tao.
\newblock Quadratic uniformity of the {M}\"{o}bius function.
\newblock {\em Ann. Inst. Fourier (Grenoble)}, 58(6):1863--1935, 2008.

\bibitem{gt-sumset}
B.~Green and T.~Tao.
\newblock An equivalence between inverse sumset theorems and inverse conjectures for the {$U^3$} norm.
\newblock {\em Math. Proc. Cambridge Philos. Soc.}, 149(1):1--19, 2010.

\bibitem{gt-linear}
B.~Green and T.~Tao.
\newblock Linear equations in primes.
\newblock {\em Ann. of Math. (2)}, 171(3):1753--1850, 2010.

\bibitem{gt-mobius}
B.~Green and T.~Tao.
\newblock The {M}\"obius function is strongly orthogonal to nilsequences.
\newblock {\em Ann. of Math. (2)}, 175(2):541--566, 2012.

\bibitem{gt-leibman}
B.~Green and T.~Tao.
\newblock The quantitative behaviour of polynomial orbits on nilmanifolds.
\newblock {\em Ann. of Math. (2)}, 175(2):465--540, 2012.

\bibitem{green-tao-4AP}
B.~Green and T.~Tao.
\newblock New bounds for {S}zemer\'{e}di's theorem, {III}: a polylogarithmic bound for {$r_4(N)$}.
\newblock {\em Mathematika}, 63(3):944--1040, 2017.

\bibitem{gt-U4}
B.~Green, T.~Tao, and T.~Ziegler.
\newblock An inverse theorem for the {G}owers {$U^4$}-norm.
\newblock {\em Glasg. Math. J.}, 53(1):1--50, 2011.

\bibitem{gtz-uk}
B.~Green, T.~Tao, and T.~Ziegler.
\newblock An inverse theorem for the {G}owers {$U^{s+1}[N]$}-norm.
\newblock {\em Ann. of Math. (2)}, 176(2):1231--1372, 2012.

\bibitem{hildebrand-tenenbaum}
A.~Hildebrand and G.~Tenenbaum.
\newblock Integers without large prime factors.
\newblock {\em J. Th\'eor. Nombres Bordeaux}, 5(2):411--484, 1993.

\bibitem{iwaniec-kowalski}
H.~Iwaniec and E.~Kowalski.
\newblock {\em Analytic number theory}, volume~53 of {\em American Mathematical Society Colloquium Publications}.
\newblock American Mathematical Society, Providence, RI, 2004.

\bibitem{Kelley-Meka}
Z.~{Kelley} and R.~{Meka}.
\newblock {Strong Bounds for 3-Progressions}.
\newblock {\em arXiv e-prints}, page arXiv:2302.05537, February 2023.

\bibitem{leng}
J.~{Leng}.
\newblock {Improved Quadratic Gowers Uniformity for the M{\"o}bius Function}.
\newblock {\em arXiv e-prints}, page arXiv:2212.09635, December 2022.

\bibitem{lucier}
J.~Lucier.
\newblock Difference sets and shifted primes.
\newblock {\em Acta Math. Hungar.}, 120(1-2):79--102, 2008.

\bibitem{manners}
F.~{Manners}.
\newblock {Quantitative bounds in the inverse theorem for the Gowers $U^{s+1}$-norms over cyclic groups}.
\newblock {\em arXiv e-prints}, page arXiv:1811.00718, November 2018.

\bibitem{MRTTZ}
K.~Matom\"{a}ki, M.~Radziwi{\l}{\l}, T.~Tao, J.~Ter\"{a}v\"{a}inen, and T.~Ziegler.
\newblock Higher uniformity of bounded multiplicative functions in short intervals on average.
\newblock {\em Ann. of Math. (2)}, 197(2):739--857, 2023.

\bibitem{mv}
H.~L. Montgomery and R.~C. Vaughan.
\newblock {\em Multiplicative number theory. {I}. {C}lassical theory}, volume~97 of {\em Cambridge Studies in Advanced Mathematics}.
\newblock Cambridge University Press, Cambridge, 2007.

\bibitem{RTTV}
O.~Reingold, L.~Trevisan, M.~Tulsiani, and S.~Vadhan.
\newblock Dense subsets of pseudorandom sets.
\newblock {\em Electronic Colloquium on Computational Complexity, Proceedings of 49th IEEE FOCS}, 2008.

\bibitem{ruzsa-sanders}
I.~Z. Ruzsa and T.~Sanders.
\newblock Difference sets and the primes.
\newblock {\em Acta Arith.}, 131(3):281--301, 2008.

\bibitem{sanders}
T.~Sanders.
\newblock On the {B}ogolyubov-{R}uzsa lemma.
\newblock {\em Anal. PDE}, 5(3):627--655, 2012.

\bibitem{sarkozy}
A.~S\'{a}rk\"{o}zy.
\newblock On difference sets of sequences of integers. {III}.
\newblock {\em Acta Math. Acad. Sci. Hungar.}, 31(3-4):355--386, 1978.

\bibitem{schmidt}
W.~M. Schmidt.
\newblock {\em Small fractional parts of polynomials}.
\newblock American Mathematical Society, Providence, R.I., 1977.
\newblock Regional Conference Series in Mathematics, No. 32.

\bibitem{shkredov}
I.~D. Shkredov.
\newblock Szemer\'{e}di's theorem and problems of arithmetic progressions.
\newblock {\em Uspekhi Mat. Nauk}, 61(6(372)):111--178, 2006.

\bibitem{tt-oddorder}
T.~{Tao} and J.~{Ter{\"a}v{\"a}inen}.
\newblock Odd order cases of the logarithmically averaged chowla conjecture.
\newblock {\em J. Th\'eor. Nombres Bordeaux}, 30(3):997--1015, 2018.

\bibitem{vaughan}
R.-C. Vaughan.
\newblock Sommes trigonom\'{e}triques sur les nombres premiers.
\newblock {\em C. R. Acad. Sci. Paris S\'{e}r. A-B}, 285(16):A981--A983, 1977.

\bibitem{walfisz}
A.~Walfisz.
\newblock {\em Weylsche {E}xponentialsummen in der neueren {Z}ahlentheorie}.
\newblock Mathematische Forschungsberichte, XV. VEB Deutscher Verlag der Wissenschaften, Berlin, 1963.

\bibitem{walker}
A.~Walker.
\newblock Linear inequalities in primes.
\newblock {\em J. Anal. Math.}, 145(1):29--127, 2021.

\bibitem{wang}
R.~Wang.
\newblock On a theorem of {S}\'{a}rk\"{o}zy for difference sets and shifted primes.
\newblock {\em J. Number Theory}, 211:220--234, 2020.

\bibitem{wooley-ziegler}
T.~D. Wooley and T.~D. Ziegler.
\newblock Multiple recurrence and convergence along the primes.
\newblock {\em Amer. J. Math.}, 134(6):1705--1732, 2012.

\end{thebibliography}
\bibliographystyle{plain}

\end{document}